\newtheorem{theorem}{Theorem}[section]
\newtheorem{lemma}[theorem]{Lemma}
\newtheorem{proposition}[theorem]{Proposition}
\newtheorem{corollary}[theorem]{Corollary}
\theoremstyle{definition}
\theoremstyle{remark}
\newtheorem{remark}[theorem]{Remark}
\newtheorem{example}[theorem]{Example}
\newcommand{\sC}{\mathcal{C}}
\newcommand{\sE}{\mathcal{E}}
\newcommand{\sF}{\mathcal{F}}
\newcommand{\sH}{\mathcal{H}}
\newcommand{\sL}{\mathcal{L}}
\newcommand{\sM}{\mathcal{M}}
\newcommand{\sN}{\mathcal{N}}
\newcommand{\sO}{\mathcal{O}}
\newcommand{\sX}{\mathcal{X}}
\newcommand{\sY}{\mathcal{Y}}
\newcommand{\sZ}{\mathcal{Z}}
\newcommand{\bbA}{\mathbb{A}}
\newcommand{\bbC}{\mathbb{C}}
\newcommand{\bbG}{\mathbb{G}}
\newcommand{\bbN}{\mathbb{N}}
\newcommand{\bbP}{\mathbb{P}}
\newcommand{\bbQ}{\mathbb{Q}}
\newcommand{\bbR}{\mathbb{R}}
\newcommand{\bbV}{\mathbb{V}}
\newcommand{\bbZ}{\mathbb{Z}}
\newcommand{\onto}{\rightarrow\hspace*{-.14in}\rightarrow}
\renewcommand{\div}{\text{div}}
\DeclareMathOperator{\Pic}{Pic}
\DeclareMathOperator{\coker}{coker}
\DeclareMathOperator{\im}{im}
\DeclareMathOperator{\id}{id}
\DeclareMathOperator{\Hom}{Hom}
\DeclareMathOperator{\Spec}{Spec}
\DeclareMathOperator{\Jac}{Jac}
\DeclareMathOperator{\Aut}{Aut}
\DeclareMathOperator{\Char}{Char}
\DeclareMathOperator{\Fix}{Fix}
\DeclareMathOperator{\Gal}{Gal}
\DeclareMathOperator{\br}{Br}
\DeclareMathOperator{\sing}{sing}
\DeclareMathOperator{\reg}{reg}
\DeclareMathOperator{\tor}{tor}
\DeclareMathOperator{\sm}{sm}
\DeclareMathOperator{\NS}{NS}
\begin{document}
\title[On the Brauer group of a generic Godeaux surface]{On the Brauer group of a generic Godeaux surface}

\author{Theodosis Alexandrou}
\setstretch{1.16}

\keywords{}
\subjclass[2010]{}

\makeatletter
  \hypersetup{
    pdfauthor=,
    pdfsubject=\@subjclass,
    pdfkeywords=\@keywords
  }
\makeatother

\maketitle
\begin{abstract} Let $X$ be a Godeaux surface and $q_{X}\colon Y\to X$ be its universal cover. We show that the pullback map $q_{X}^{*}\colon\br(X)\to\br(Y)$ is injective if $\rho(Y)=9$. Our arguments rely on a degeneration technique that also applies to other examples.\end{abstract}
\section{Introduction}\label{sec:1}The Brauer group $\br(X)$ of a smooth projective variety $X$ over a field $k$ is the abelian group $H^{2}(X_{\text{\'et}},\bbG_{m})$. It can be also regarded as the group of equivalence classes of \'etale locally trivial $\bbP^{n}$ bundles over $X$ modulo Zariski locally trivial ones. This is always a torsion group and an important birational invariant of $X$.\par Let $q\colon Y\to X$ be a finite \'etale covering. This defines a natural pull-back map $q^{*}\colon \br(X)\to\br(Y)$ on Brauer groups. Since $q_{*}q^{*}=\deg(q)\cdot\id$, the kernel of $q^{*}$ is killed by the positive integer $d\coloneqq\deg(q)$. Therefore, detecting whether a given $d$-torsion class $a\in\br(X)[d]$ pulls back to zero is a subtle problem.\par Beauville studied this question in the case of complex Enriques surfaces, which was first stated in \cite{hsk}. Namely, an Enriques surface $X$ admits a natural double cover $Y$, which is a $K3$ surface. Beauville \cite{bea} showed that the kernel of the map $\bbZ/2=\br(X)\to\br(Y)=(\bbQ/\bbZ)^{22-\rho},\ 1\leq\rho\leq 20,$ depends on $X$. More specifically, he described lattice theoretically the surfaces $X$ in the coarse moduli space of Enriques for which the kernel of $q^{*}_{X}$ is non-trivial and showed that this locus is a countable, infinite union of non-empty algebraic hypersurfaces. His method fails to give a concrete example but among others Garbagnati and Sch\"utt \cite{gm} constructed examples of Enriques surfaces $X$ over $\bbQ$ with $q^{*}_{X}$ being injective or trivial.\par Recent work of Bergstr\"om, Ferrari, Tirabassi and Vodrup \cite{bie} uses Beauvilles' method to answer similar questions for a Bi-elliptic surface $X$ over $\bbC$.\par In \cite{talk}, Beauville asked the above question for Godeaux surfaces. A Godeaux surface $X$ is a classical example of a surface of general type with $H^{i}(X,\sO_{X})=0$ for $i=1,2$ and $\NS(X)_{\tor}\neq 0$($\cong\bbZ/5$). They usually appear as $\bbZ/5$-quotients of smooth quintics $Y\subset\bbP^{3}_{\bbC}$ that are invariant and fixed point free by the automorphism \[\varphi\colon (x_{0}:x_{1}:x_{2}:x_{3})\mapsto (x_{0}:\zeta x_{1}:\zeta^{2}x_{2}:\zeta^{3}x_{3}),\tag{1.1}\label{1.1}\]where $\zeta$ is a primitive $5$-th root of unity (see \cite[$\S$2]{lw}).\par The main result of this paper answers the question for $\rho(Y)=9$, which is the generic case (see \cite[Example 3]{BoM}).\begin{theorem}\label{thm:1.1}Let $X$ be a Godeaux surface over $\bbC$ and $q_{X}\colon Y\to X$ be its universal cover. If the Picard number of $Y$ is $9$, then the pull-back map $q^{*}_{X}\colon\br(X)\to\br(Y)$ is injective.\end{theorem}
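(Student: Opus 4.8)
The plan is to pin down both Brauer groups numerically and then reduce the injectivity of $q_X^*$ to a group-cohomology computation for the Galois action of $G\coloneqq\Gal(Y/X)\cong\bbZ/5$ on $\Pic(Y)$. \emph{Step 1 (the source).} Since $X$ is Godeaux we have $p_g(X)=q(X)=0$, $K_X^2=1$, so Noether's formula gives $e(X)=11$ and $b_2(X)=9$. With $p_g=0$ the Hodge decomposition reads $H^2(X,\bbC)=H^{1,1}$, so by the Lefschetz theorem every integral class is algebraic and $\rho(X)=9=b_2(X)$; in particular $\br(X)$ has no divisible part. Because $\pi_1(X)=\bbZ/5$, Poincar\'e duality and universal coefficients give $H^3(X,\bbZ)\cong H_1(X,\bbZ)=\bbZ/5$, pure torsion, whence $\br(X)\cong H^3(X,\bbZ)_{\tor}=\bbZ/5$. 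Thus $\ker q_X^*$ is either $0$ or all of $\bbZ/5$, and it suffices to show it is trivial.

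\emph{Step 2 (reduction via Hochschild--Serre).} I would run the spectral sequence $H^p(G,H^q(Y,\bbG_m))\Rightarrow H^{p+q}(X,\bbG_m)$ for the \'etale $G$-cover $q_X\colon Y\to X$. Using $H^0(Y,\bbG_m)=\bbC^*$, $H^1(Y,\bbG_m)=\Pic(Y)$ and $H^2(Y,\bbG_m)=\br(Y)$, its low-degree exact sequence reads
\[ H^2(G,\bbC^*)\longrightarrow\ker\!\big(\br(X)\to\br(Y)\big)\longrightarrow H^1(G,\Pic(Y))\xrightarrow{\ d_2\ }H^3(G,\bbC^*).\]
Since $\bbC^*$ is divisible with trivial $G$-action, $H^2(G,\bbC^*)=\bbC^*/(\bbC^*)^5=0$, so the sequence identifies $\ker q_X^*$ with a subgroup of $H^1(G,\Pic(Y))$. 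This is the exact analogue of the mechanism Beauville exploits for Enriques surfaces, where the non-triviality of the kernel reflects a non-trivial Galois action on the N\'eron--Severi lattice of the $K3$ cover. Hence it is enough to prove $H^1(G,\Pic(Y))=0$.

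\emph{Step 3 (the role of $\rho(Y)=9$).} As $Y$ is a smooth quintic it is simply connected, so $\Pic(Y)=\NS(Y)$ is a torsion-free $G$-module of rank $\rho(Y)=9$. On the other hand $H^2(Y,\bbQ)^G\cong H^2(X,\bbQ)$ is $9$-dimensional, and since $H^{2,0}(Y)^G=H^0(X,K_X)=0$ (because $p_g(X)=0$) the invariant subspace lies in $H^{1,1}(Y)$; by Lefschetz every class of $H^2(Y,\bbQ)^G$ is therefore algebraic, giving $H^2(Y,\bbQ)^G\subseteq\NS(Y)\otimes\bbQ$. Comparing dimensions ($9=9$) forces $\NS(Y)\otimes\bbQ=H^2(Y,\bbQ)^G$, i.e. $G$ acts trivially on $\NS(Y)\otimes\bbQ$ and hence on the lattice $\NS(Y)\cong\bbZ^9$ itself. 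For a trivial torsion-free module $H^1(\bbZ/5,\bbZ^9)=\{m:5m=0\}/(\sigma-1)\bbZ^9=0$, so $\ker q_X^*=0$ and $q_X^*$ is injective.

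The main obstacle is Step 3: everything collapses once one knows that $\rho(Y)=9$ forces the Galois action on $\NS(Y)$ to be trivial, equivalently that there are no algebraic classes outside the invariant part. The route above uses the Hodge-theoretic input $H^{2,0}(Y)^G=0$; the delicate point, and presumably what the degeneration technique is designed to control robustly, is the passage from the rational to the integral $G$-module structure of $\NS(Y)$ together with the exclusion of surplus algebraic classes for the generic invariant quintic. If $\rho(Y)>9$ the action on the extra part of $\NS(Y)$ may be non-trivial, $H^1(G,\NS(Y))$ may fail to vanish, and---exactly as in Beauville's Enriques analysis---injectivity of $q_X^*$ can genuinely break down; this is what singles out the value $\rho(Y)=9$.
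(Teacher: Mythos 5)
Your argument is correct, but it is a genuinely different route from the paper's. You run Beauville's Hochschild--Serre mechanism for the $\bbZ/5$-cover and reduce everything to $H^{1}(G,\Pic(Y))$, then kill that group by observing that $\rho(Y)=9$ forces $\NS(Y)\otimes\bbQ$ to coincide with the $9$-dimensional invariant subspace $H^{2}(Y,\bbQ)^{G}\cong H^{2}(X,\bbQ)$ (which consists of $(1,1)$-classes because $p_{g}(X)=0$), so that $G$ acts trivially on the torsion-free lattice $\NS(Y)$ and $H^{1}(\bbZ/5,\bbZ^{9})=0$. Each step checks out: $H^{2}(G,\bbC^{*})=\bbC^{*}/(\bbC^{*})^{5}=0$, the edge map of the spectral sequence is $q_{X}^{*}$, and the ``rational to integral'' passage you flag as delicate is in fact immediate, since $\NS(Y)$ is torsion-free and embeds $G$-equivariantly into $\NS(Y)\otimes\bbQ$ (one should also record, as the paper's Remark 5.4 does, that $\rho(Y)=9$ forces $Y$ to be a smooth quintic, though simple connectivity of the universal cover is automatic either way). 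Notably, your proof handles \emph{every} $X$ with $\rho(Y)=9$ directly, so it does not even need a specialization step. The paper proceeds quite differently: it degenerates a generic invariant quintic to one with five triple points, performs semistable reduction, shows via Mayer--Vietoris and proper base change that $\br(\sX)=0$ for the total space (Lemmas 4.1 and 5.2), invokes the lifting criterion of Theorem 1.4/3.1 to conclude injectivity for one generic member (Theorem 1.3), and then spreads this out to all $Y$ with $\rho(Y)=9$ by the specialization argument of Proposition 4.4. What your approach buys is brevity and directness for Theorem 1.1 itself; what the paper's degeneration technique buys is a tool that avoids computing $H^{1}(G,\Pic(Y))$ altogether and therefore extends to situations where that group is nonzero or hard to control --- the statement about the explicit pencil in Theorem 1.3, the non-degeneration result for Enriques surfaces of type $(\text{ii}a)$ in Section 6, and the quotients of products of curves in Section 7.
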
 As an immediate consequence, we also obtain:\begin{corollary}\label{cor:1.2}For the very general Godeaux surface $X$ over $\bbC$, the pull-back map $q^{*}_{X}$ is injective.\end{corollary}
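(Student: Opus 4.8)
The plan is to deduce Corollary~\ref{cor:1.2} directly from Theorem~\ref{thm:1.1}. The only point requiring an argument is that the hypothesis $\rho(Y)=9$ holds for the very general member of the family; this I would establish by combining the known generic value of the Picard number with the countability of Noether--Lefschetz loci.

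First I would fix the parameter space. By the description preceding Theorem~\ref{thm:1.1}, every Godeaux surface under consideration is a free quotient $X=Y/(\bbZ/5)$ of a smooth quintic $Y\subset\bbP^{3}_{\bbC}$ invariant under the automorphism $\varphi$ of~\eqref{1.1}. Such $Y$ are the smooth, fixed-point-free members of the linear system $\bbP(V)$ of $\varphi$-invariant quintic forms; they form a non-empty, irreducible Zariski-open subset $\sU\subset\bbP(V)$, which dominates the moduli space of these Godeaux surfaces. Since $\rho(Y)$ depends only on the isomorphism class of $X$, the very general Godeaux surface is parametrized by a very general point of $\sU$, that is, one avoiding a countable union of proper closed subvarieties.

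Next I would record that $\rho(Y_{t})\geq 9$ for every $t\in\sU$. Indeed $q_{X}\colon Y\to X$ is \'etale of degree $5$, so $q_{X*}q_{X}^{*}=5\cdot\id$ forces $q_{X}^{*}\colon\NS(X)_{\bbQ}\to\NS(Y)_{\bbQ}$ to be injective; and since $X$ is a Godeaux surface, $H^{2}(X,\sO_{X})=0$ makes every rational degree-$2$ class algebraic, whence $\rho(X)=b_{2}(X)=9$ (Noether's formula gives $e(X)=11$). The external input I would invoke is \cite[Example 3]{BoM}, according to which this bound is sharp for the generic invariant quintic: $\min_{t\in\sU}\rho(Y_{t})=9$.

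Finally I would pass from ``generic'' to ``very general''. Since $p_{g}(Y)=4>0$, the fibers $Y_{t}$ carry a non-trivial weight-$2$ variation of Hodge structure over $\sU$, so the jumping locus $\{t\in\sU:\rho(Y_{t})\geq 10\}$ is a Noether--Lefschetz locus: by the algebraicity of Hodge loci it is a countable union of closed algebraic subsets of $\sU$, and each is proper precisely because, by the previous step, the generic value of $\rho(Y_{t})$ is $9$ rather than $\geq 10$. Its complement is then a very general subset of $\sU$ on which $\rho(Y_{t})=9$, and Theorem~\ref{thm:1.1} applies to each such member, yielding the injectivity of $q_{X}^{*}$. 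The one genuinely external ingredient --- and the place where the geometry of the fixed-point-free $\bbZ/5$-action enters, guaranteeing that the very general invariant quintic carries no algebraic class beyond the nine pulled back from $X$ --- is the equality $\min_{t}\rho(Y_{t})=9$ of \cite[Example 3]{BoM}; I expect this to be the main obstacle, the passage to the very general member being formal.
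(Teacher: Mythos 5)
Your proposal is correct and matches the paper's (implicit) argument: the corollary is deduced from Theorem~\ref{thm:1.1} together with the fact, cited from \cite[Example 3]{BoM}, that $\rho(Y)=9$ for the generic invariant quintic, the passage to the very general member being the standard countability of the Noether--Lefschetz jumping locus. The paper leaves all of this unstated as ``immediate''; you have simply written out the same deduction in full.
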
Theorem \ref{thm:1.1} will be deduced from the following result.\begin{theorem}\label{thm:1.3} Let $k$ be an algebraically closed field of characteristic $0$. Pick a generic pencil $\sY\to\bbP^{1}_{k}\subset|\sO_{\bbP^{3}_{k}}(5)|^{\bbZ/5}$ of $\bbZ/5$-invariant quintics, with some fibre $\sY_{t_{0}},\ t_{0}\in\bbP^{1}_{k}(k)$ not passing through any of the 4 fixed points of the action \eqref{1.1} and having 5 triple points as its only singularities. Let $q\colon\sY\to\sX$ be the quotient map. Then the pull-back map $q^{*}_{\bar{\eta}}\colon\br(\sX_{\bar{\eta}})\to\br(\sY_{\bar{\eta}})$ is injective.\end{theorem}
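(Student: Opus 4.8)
The plan is to turn the injectivity of $q^{*}_{\bar\eta}$ into the vanishing of a group-cohomology group attached to the Néron–Severi lattice of the generic fibre, and then to determine that lattice as an integral $\bbZ/5$-module by degenerating to the singular fibre $\sY_{t_0}$.

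First I would set $G=\bbZ/5=\Gal(\sY_{\bar\eta}/\sX_{\bar\eta})$, which makes sense because the generic fibre avoids the four fixed points and so $q_{\bar\eta}$ is étale Galois of degree $5$. Running the Hochschild–Serre spectral sequence $E_2^{p,q}=H^p\!\bigl(G,H^q_{\text{ét}}(\sY_{\bar\eta},\bbG_m)\bigr)\Rightarrow H^{p+q}_{\text{ét}}(\sX_{\bar\eta},\bbG_m)$, and using that $\bar\eta$ is algebraically closed so that $H^0(\sY_{\bar\eta},\bbG_m)=\bar\eta^{\times}$ is divisible and $H^p(G,\bar\eta^{\times})=0$ for $p>0$, the terms $E_2^{p,0}$ vanish for $p>0$; all differentials touching $E_2^{1,1}$ then vanish, and the edge map identifies the kernel of $q^{*}_{\bar\eta}$ with the weight-one piece of the filtration. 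This yields
\[
\ker\bigl(q^{*}_{\bar\eta}\colon\br(\sX_{\bar\eta})\to\br(\sY_{\bar\eta})\bigr)\;\cong\;H^{1}\!\bigl(G,\Pic(\sY_{\bar\eta})\bigr).
\]
As $\sY_{\bar\eta}$ is a smooth quintic surface, $H^1(\sO)=0$ gives $\Pic(\sY_{\bar\eta})=\NS(\sY_{\bar\eta})$, a torsion-free lattice, so the theorem is equivalent to $H^{1}\!\bigl(G,\NS(\sY_{\bar\eta})\bigr)=0$.

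The invariant part of this lattice is forced a priori. Since $\sX_{\bar\eta}$ is a Godeaux surface, $p_g=0$ forces $\rho(\sX_{\bar\eta})=b_2=9$, so every class in $H^2(\sX_{\bar\eta})$ is algebraic; pulling back along the free cover, $H^2(\sY_{\bar\eta})^{G}=q^{*}_{\bar\eta}H^2(\sX_{\bar\eta})$ is a $9$-dimensional space of algebraic classes, so $\NS(\sY_{\bar\eta})\otimes\bbQ$ always contains this trivial $9$-dimensional summand. The whole content is therefore to decide whether $G$ acts with further non-invariant algebraic classes, and to pin down the \emph{integral} structure of the module. This is where the degeneration enters: for the generic pencil the total space $\sY$ is smooth (the pencil direction is nonzero at the base points of each singular fibre), and $\sY_{t_0}$ has exactly five simple-elliptic ($\tilde E_6$) triple points forming a single free $G$-orbit. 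Fixing an embedding $k\hookrightarrow\bbC$, I would analyse the limit mixed Hodge structure of $\sY\to\bbP^1$ at $t_0$ (equivalently the Clemens–Schmid sequence), using that a resolution $\widetilde{\sY_{t_0}}$ replaces the five singular points by five elliptic curves which $G$ permutes simply transitively. Combining the $G$-equivariant specialization $\NS(\sY_{\bar\eta})\hookrightarrow H^2(\widetilde{\sY_{t_0}})$ with the bound on algebraic classes coming from the weight-two, monodromy-invariant $(1,1)$-part of the limit should describe $\NS(\sY_{\bar\eta})$ as a $\bbZ[G]$-module: the trivial invariant part, together with at most an induced summand $\bbZ[G]$ carried by the orbit of exceptional curves.

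Granting such a description the conclusion is formal. A lattice on which $G$ acts trivially over $\bbQ$ carries a trivial integral action (a finite-order lattice automorphism equal to the identity rationally is the identity), so its $H^1$ vanishes; and an induced summand $\bbZ[G]$ is cohomologically trivial. In either case $H^{1}\!\bigl(G,\NS(\sY_{\bar\eta})\bigr)=0$ and $q^{*}_{\bar\eta}$ is injective. The technical heart, and the main obstacle, is precisely the integral module computation of the previous step: rationally nothing can go wrong, but $H^1(G,-)$ is $5$-torsion and detects only the lattice. The dangerous configuration is an augmentation-ideal summand $I\subset\bbZ[G]$, for which $H^{1}(G,I)\cong\bbZ/5$ and injectivity would fail. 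Ruling this out amounts to showing that the exceptional classes enter $\NS(\sY_{\bar\eta})$ as the full regular representation rather than as a proper $G$-sublattice, and it is exactly here that the hypotheses on $\sY_{t_0}$—five triple points in a single $G$-orbit, disjoint from the fixed locus, and being the only singularities of the fibre—are indispensable.
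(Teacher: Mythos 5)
Your Hochschild--Serre reduction is sound in outline, but note that $H^{p}(G,\bar{K}^{*})$ does \emph{not} vanish for all $p>0$: for $G=\bbZ/5$ acting trivially on the divisible group $\bar{K}^{*}$ one has $H^{2}(G,\bar{K}^{*})=0$ but $H^{3}(G,\bar{K}^{*})\cong\mu_{5}\neq 0$, so the differential $d_{2}\colon E_{2}^{1,1}\to E_{2}^{3,0}$ need not vanish and you only obtain an injection $\ker(q^{*}_{\bar{\eta}})\hookrightarrow H^{1}(G,\Pic(\sY_{\bar{\eta}}))$ rather than an isomorphism. That is harmless for proving injectivity. The genuine gap is the step you yourself flag as ``the technical heart'': you never determine $\NS(\sY_{\bar{\eta}})$ as a $\bbZ[G]$-module, and the route you propose cannot do it. A resolution of $\sY_{t_{0}}$ is an elliptic ruled surface, so its entire $H^{2}$ is algebraic, and the Clemens--Schmid/limit-MHS analysis at the single point $t_{0}$ therefore yields no upper bound on $\rho(\sY_{\bar{\eta}})$; in particular it cannot exclude the augmentation-ideal summand you correctly identify as fatal. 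Moreover the five exceptional elliptic curves live on the resolution of the \emph{special} fibre, not on the smooth generic fibre, so they do not ``enter $\NS(\sY_{\bar{\eta}})$'' at all: your picture of an induced summand carried by that orbit conflates classes on the resolved degenerate fibre with classes that survive to $\sY_{\bar{\eta}}$.

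What actually closes your argument is an input you never invoke: for the (very) generic pencil one has $\rho(\sY_{\bar{\eta}})=9$, the generic Picard number of $\bbZ/5$-invariant quintics (the paper cites \cite[Example 3]{BoM} for this, and it is needed in any case to make Theorem \ref{thm:1.1} about the generic Godeaux surface). Since $q^{*}_{\bar{\eta}}\NS(\sX_{\bar{\eta}})\otimes\bbQ$ is already $9$-dimensional and $G$-invariant, $\rho(\sY_{\bar{\eta}})=9$ forces $G$ to act trivially on the torsion-free lattice $\NS(\sY_{\bar{\eta}})$, whence $H^{1}(G,\NS(\sY_{\bar{\eta}}))=\Hom(\bbZ/5,\bbZ^{9})=0$ and injectivity follows with no limit mixed Hodge structure at all. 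Be aware that this is a completely different argument from the paper's: there one assumes $q^{*}_{\bar{\eta}}=0$ on $5$-torsion, applies the semistable lifting criterion (Theorem \ref{thm:1.4}) to conclude that the generator of $\br(\sX_{\bar{\eta}})\cong\bbZ/5$ lifts to $\br(\sX)$ for a semistable model, and derives a contradiction from $\br(\sX)=0$, proved via proper base change and a Mayer--Vietoris computation on the special fibre. The paper's route is the one that generalizes to the Enriques and product-of-curves applications, where the invariant part of $\NS$ of the cover does not exhaust the whole lattice and your spectral-sequence shortcut is unavailable.
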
 A pencil with the properties mentioned in Theorem \ref{thm:1.3} exists. This relies on the fact that the invariant quintics $|\sO_{\bbP^{3}_{k}}(5)|^{\bbZ/5}$ form a 12-dimensional linear space and for a given point $p\in\bbP^{3}_{k}(k)$ not in any coordinate hyperplane, we can always find a quintic surface $Y$ invariant and fixed point free under the $\bbZ/5$-action \eqref{1.1} with 5 triple points at the orbit of $p$ and no other singularities (see \cite[Appendix 5, Lemma 1]{persson}).\par It remains unclear whether there are actually examples of Godeaux surfaces $X$, such that $q^{*}_{X}=0$.\par Our method is adjustable and can be also applied to other surfaces. In section \ref{sec:6} we show that complex Enriques surfaces $X$ with $q^{*}_{X}=0$ do not admit a type II degeneration (see Theorem \ref{thm:6.2}). By comparing the latter with Beauvilles' result \cite[Corollary 6.5]{bea}, one can determine hypersurfaces in the coarse moduli space of Enriques, with the property that any of their members cannot be degenerated to a type II (see Corollary \ref{cor:6.5}). In the last section we also study the case of cyclic quotients of products of curves (see Theorem \ref{thm:7.1}).\subsection{A degeneration technique.}\label{subsec:1.1} Theorem \ref{thm:1.3} relies on the following, which is the crucial technical result of the paper.\begin{theorem}\label{thm:1.4} Let $q\colon\sY\to\sX$ be a finite \'etale Galois covering of strictly semi-stable $R$-schemes, such that the \'etale cover $q_{0}$ is trivial over $\sX^{\sing}_{0}\cap\sX_{0,i}$ for every component $\sX_{0,i}$ of the special fibre $\sX_{0}$. Assume that the degree $d\coloneqq\deg(q)$ is invertible in $R$. Then \eqref{1'''} implies \eqref{2'''}:\begin{enumerate}
    \item \label{1'''} The pull-back $q_{\bar{\eta}}^{*}\colon\br(\sX_{\bar{\eta}})[d]\to\br(\sY_{\bar{\eta}})[d]$ is the zero map.\item\label{2'''} Up to some finite ramified base change of discrete valuation rings $R\subset\tilde{R}$ followed by a resolution of $\sX_{\tilde{R}},$ the restriction $\br(\sX)[d]\to\br(\sX_{\bar{\eta}})[d]$ is surjective.  
\end{enumerate}\end{theorem}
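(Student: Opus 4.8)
The plan is to exploit the fact that the Galois cover $q$ is defined over all of $R$, so that the Hochschild--Serre (Sansuc) exact sequence computing the kernel of the Brauer pull-back is available both over the geometric generic fibre and over a regular model, and that these two sequences are compatible under restriction. The engine is the low-degree sequence attached to a connected proper $G$-cover $Y\to Z$ over an algebraically closed field, with $G=\Gal(\sY/\sX)$ cyclic of order $d$: one has an exact piece $H^2(G,\sO^*(Y))\to\ker(\br(Z)\to\br(Y))\to H^1(G,\Pic(Y))\to H^3(G,\sO^*(Y))$. Applied to $Z=\sX_{\bar\eta}$, where the units are constants and $H^2(G,k^*)=0$, this yields an injection $\ker(q^*_{\bar\eta})\hookrightarrow H^1(G,\Pic(\sY_{\bar\eta}))$. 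By hypothesis \eqref{1'''} every class of $\br(\sX_{\bar\eta})[d]$ lies in $\ker(q^*_{\bar\eta})$, hence is described by a class $c\in H^1(G,\Pic(\sY_{\bar\eta}))$, and the task becomes to realize $c$ over the total space.

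I would then reduce \eqref{2'''} to lifting such a $c$. After a finite base change $R\subset\tilde R$ chosen so that the finitely many line bundles on $\sY_{\bar\eta}$ entering the cocycles $c$ are already defined over $\mathrm{Frac}(\tilde R)$, and after a resolution making $\sX_{\tilde R}$ and $\sY_{\tilde R}$ regular and strictly semi-stable, regularity of the total space gives a surjection $\Pic(\sY')\twoheadrightarrow\Pic(\sY_{\bar\eta})$ whose kernel $K$ is the $G$-module of divisor classes supported on the special fibre $\sY'_0$. Writing the analogous Sansuc sequence over the model $\sX'$ and comparing it with the one over $\sX_{\bar\eta}$ via restriction, the class $a$ extends to $\br(\sX')[d]$ as soon as $c$ lifts along $H^1(G,\Pic(\sY'))\to H^1(G,\Pic(\sY_{\bar\eta}))$; from the long exact sequence of $0\to K\to\Pic(\sY')\to\Pic(\sY_{\bar\eta})\to 0$ the obstruction to lifting is $\partial(c)\in H^2(G,K)$.

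The crux is the vanishing of this obstruction, and this is where strict semi-stability and the hypothesis on $\sX^{\sing}_0$ enter. By semi-stability $K$ is the permutation $G$-module on the irreducible components of $\sY'_0$, so by Shapiro's lemma $H^2(G,K)$ decomposes into a sum of contributions $H^2(\mathrm{stab},\bbZ)$ over the $G$-orbits of components. The assumption that $q_0$ splits over $\sX^{\sing}_0\cap\sX_{0,i}$ guarantees that along the strata where one resolves the cover is trivial, so the exceptional components of $\sY'_0$ are permuted freely by $G$ and contribute induced, cohomologically trivial summands. The remaining contributions, from components over which the cover still carries nontrivial monodromy, must be shown to be annihilated by $\partial(c)$; I expect to clear them by a further ramified base change, which subdivides the special fibre and replaces $K$ by a module on which the relevant orbits become free. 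Reconciling the purely local triviality over the singular locus with the global $G$-module structure of the component lattice, and checking that a single base change suffices to kill $\partial(c)$ for all the finitely many classes simultaneously, is the main technical obstacle.

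Finally, once $\partial(c)=0$, the lift of $c$ produces, through the Sansuc sequence over $\sX'$, a class in $\ker(\br(\sX')\to\br(\sY'))\subseteq\br(\sX')[d]$ restricting to $a$; since $a\in\br(\sX_{\bar\eta})[d]$ was arbitrary, this gives the surjectivity of $\br(\sX)[d]\to\br(\sX_{\bar\eta})[d]$ asserted in \eqref{2'''}.
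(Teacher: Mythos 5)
Your reduction of \eqref{2'''} to a lifting problem in $H^1(G,\Pic(\sY'))$ is a genuinely different route from the paper, but it has a gap precisely at the point you flag as ``the main technical obstacle'', and the mechanism you propose for closing it does not work. The kernel $K$ of $\Pic(\sY')\to\Pic(\sY_{\bar\eta})$ is not a permutation module: it is the group generated by the irreducible components of $\sY'_0$ modulo at least the relation $\sum_i m_i[\sY'_{0,i}]=\div(\pi)=0$. More importantly, the non-induced summands of the component lattice come from the \emph{pre-existing} components of $\sY'_0$ on which $G$ acts with nontrivial stabilizer (in the Godeaux application the central component of the flower pot is fixed by all of $\bbZ/5$; in the Enriques application several components of the elliptic chain are $G$-invariant). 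No further ramified base change makes these orbits free: base change followed by resolution inserts new, freely permuted components over the double locus but leaves the old invariant components invariant, so $H^2(G,K)$ retains summands of the form $H^2(\mathrm{Stab},\bbZ)\cong\bbZ/|\mathrm{Stab}|$ and the vanishing of $\partial(c)$ is simply not established. (There is a correct version of your idea hiding here: under the base change $\pi\mapsto\pi^{e}$ the class of an old component maps to $e$ times the class of its strict transform plus new, induced classes, so the old-orbit part of $\partial(c)$ is multiplied by $e$ and dies when $d\mid e$, while the new part sits in an induced module and is killed by Shapiro. But you did not identify this, and it is exactly the multiplicity computation the paper performs, only expressed in a heavier language.) Two further points: the theorem is stated for an arbitrary finite Galois group, and for non-cyclic $G$ your very first step fails, since $H^2(G,\bar k^{*})$ (a Schur multiplier) need not vanish, so $\ker(q^*_{\bar\eta})$ need not inject into $H^1(G,\Pic(\sY_{\bar\eta}))$; and the surjectivity of $\Pic(\sY')\to\Pic(\sY_{\bar\eta})$ does not follow from regularity alone, since $\Pic(\sY'_{\eta})\to\Pic(\sY_{\bar\eta})$ need not be surjective even after descending the finitely many bundles in your cocycle.

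For comparison, the paper's proof (Theorem \ref{thm:3.1}) avoids group cohomology entirely and works with residues and purity: by Lemma \ref{lem:2.1} the class $\alpha$ lifts to $\br(\tilde\sX)$ if and only if all residues $\partial_V\alpha$ along components $V$ of the special fibre vanish. The residues along the old components are killed by the degree-$\ell^{r}$ ramified base change $\pi\mapsto\pi^{\ell^{r}}$, because Lemma \ref{lem:2.2} shows they get multiplied by the multiplicity $\ell^{r}$; the residues along the newly created components are computed by the push--pull identity $(\tilde q_g)_*\tilde q_g^{*}=\id$ over the (by hypothesis trivial) restriction of the cover, and vanish because $q^{*}_{\bar\eta}\alpha=0$. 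This is where the hypothesis on $q_0$ over $\sX_0^{\sing}\cap\sX_{0,i}$ actually enters, and it is the step your argument would need to reproduce in the form of the vanishing of $\partial(c)$.
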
 When performing a ramified base change $R\subset\tilde{R},$ the model $\sX_{\tilde{R}}$ becomes singular, but it follows from \cite[Proposition 2.2]{Har}, that the family $\sX_{\tilde{R}}\to\Spec\tilde{R}$ can be made again into a strictly semi-stable by repeatedly blowing up the non-Cartier divisors of the special fibre.\par Theorem \ref{thm:1.4} establishes a link between the following two properties (see also Theorem \ref{thm:3.1}):\begin{enumerate}[label=(\roman*)] 
    \item\label{i} A class $\alpha\in\br(\sX_{\bar{\eta}})$ pulls back to zero, i.e. $q_{\bar{\eta}}^{*}\alpha=0\in\br(\sY_{\bar{\eta}})$.\item\label{ii} Up to some finite base change of discrete valuation rings $R\subset\tilde{R}$ followed by a resolution of $\sX_{\tilde{R}}$, $\alpha\in\br(\sX_{\bar{\eta}})$ lifts to an honest class in $\br(\sX)$.\end{enumerate}      
The advantage of relating these properties is that in many cases it is not hard to check whether a given class $\alpha\in\br(\sX_{\bar{\eta}})$ lifts to the total space $\sX$. In all applications presented in this paper, the restriction map $\br(\sX)\to\br(\sX_{\bar{\eta}})$ happens to be trivial, either because $\br(\sX)=0$ or $\br(\sX)$ is a divisible group and $\br(\sX_{\bar{\eta}})$ is finite (see Lemmas \ref{lem:5.2}, \ref{lem:6.3} and \ref{lem:7.2}). An example with $\br(\sX)\to\br(\sX_{\bar{\eta}})$ being surjective appears in \cite[Theorem 7.1]{Stef}.\par The property \ref{ii} had been also studied in \cite{Stef}, where a relation to algebraic torsion classes had been found.\subsection{Outline of the proofs.}\label{subsec:1.2} The proof of Theorem \ref{thm:1.4} relies on the following observations. Pick a Brauer class $\alpha\in\br(\sX_{\eta})\{\ell\}$ with $q^{*}_{\eta}\alpha=0$. Up to a suitable finite ramified base change, it is possible to achieve vanishing of the residues $\partial_{i}\alpha\in H^{1}_{\text{\'et}}(\sX_{0,i},\bbQ_{\ell}/\bbZ_{\ell})$ for all irreducible components $\sX_{0,i}$ that have not been introduced by resolving the singularities that appeared due to the base change. Using our assumption for $q_{0},$ we see that over the new components the covering map $q$ is trivial. Hence, the usual pull-push argument shows that the remaining residues of $\alpha$ coincide with the ones of its pull-back $0=q^{*}_{\eta}\alpha\in\br(\sY_{\eta})\{\ell\}$ and so, all vanish. This will conclude the proof of Theorem \ref{thm:1.4}.\par Let $\sY\to\bbP^{1}_{k}\subset|\sO_{\bbP^{1}_{k}}(5)|^{\bbZ/5}$ be as in Theorem \ref{thm:1.3} and perform a base change with respect to the local ring at $t_{0}\in\bbP^{1}_{k}$. By applying semi-stable reduction to $\sY$ and its $\bbZ/5$-quotient $\sX$, we arrive at a $5:1$ \'etale cover $q\colon\sY\to\sX$, that is trivial over $\sX^{\sing}_{0}\cap\sX_{0,i}$ for all components $\sX_{0,i}$ of the special fibre $\sX_{0}$. Replacing $R\coloneqq\sO_{\bbP^{1}_{k},t_{0}}$ with its completion $\hat{R}\cong k[[t]]$, the proper base change theorem can be used to show that the restriction map $\br(\sX)\to\br(\sX_{0})_{\tor}$ is an isomorphism (see Lemma \ref{Lem:4.1}). Then an explicit calculation using the Mayer-Vietoris exact sequence gives $\br(\sX_{0})=0$ (see Lemma \ref{lem:5.2}). Hence, $\br(\sX)=0$ and so a Brauer class $0\neq\alpha\in\br(X)\cong\bbZ/5$ never lifts to the total space $\sX$. We thus obtain Theorem \ref{thm:1.3} by applying Theorem \ref{thm:1.4}.\par Theorem \ref{thm:1.3} can be used to produce a Godeaux surface $X$ with universal cover $Y$ of Picard number $9$, such that $q^{*}_{X}$ is injective. As a final step, a specialization argument comes in hand, which shows that the map in question is in fact injective for every $Y$ with $\rho(Y)=9$ (see Proposition \ref{prp:4.4}). This completes the proof of Theorem \ref{thm:1.1}.\section{Preliminaries}\label{sec:2}\subsection{Conventions and notations.}\label{subsec:2.1} Let $A$ be an abelian group and let $\ell$ be a prime number. We denote by $A[\ell^{r}]$ the subgroup of $\ell^{r}$-torsion elements. We write $A\{\ell\}$ for the $\ell$-primary subgroup of $A$. If $n\in\bbN$, then $[n]\colon A\to A$ is the map that sends $x$ to $nx$. Also $A_{\div}$ denotes the maximal divisible subgroup of $A$.\par All schemes are assumed to be seperated. A variety is a geometrically integral scheme of finite type over a field. The regular locus of $X$ will be denoted by $X^{\reg}$ and its singular by $X^{\sing}$. For an $S$-scheme $X\to S$ and any morphism $S'\to S$, we denote by $X_{S'}\coloneqq X\times_{S}S'$ the base change.\par We denote by $k^{sep}$ and $\bar{k}$ the seperable and algebraic closure of a field $k$, respectively. We use the notation $R^{\text{h}}$ for the henselization of a local ring $R$.\par Let $R$ be a discrete valuation ring with residue field $k$ and fraction field $K$. For any $R$-scheme $\sX\to\Spec R$, we write $\sX_{0}\coloneqq\sX\times_{R}k$ (resp. $\sX_{\bar{0}}\coloneqq\sX\times_{R}\bar{k}$) for the special (resp. geometric special) fibre and $\sX_{\eta}\coloneqq\sX\times_{R}K$ (resp. $\sX_{\bar{\eta}}\coloneqq\sX\times_{R}\bar{K}$) for the generic (resp. geometric generic) fibre.\par A projective flat $R$-scheme $\sX\to\Spec R$ is called \textit{strictly semi-stable}, if $\sX$ is an integral regular scheme, the generic fibre $\sX_{\eta}$ is smooth and the special fibre $\sX_{0}$ is a geometrically reduced simple normal crossing divisor on $\sX$, i.e. the irreducible components $\sX_{0}$ are all smooth varieties and the scheme-theoretic intersection of $n$ distinct components is either empty or smooth and equi-dimensional of codimension $n$ in $\sX$. A strictly semi-stable $R$-scheme $\sX\to\Spec R$ is called \textit{triple-point free} if the intersection of any three pairwise distinct components in the special fibre is empty.\subsection{\'Etale Cohomology and Brauer Group.}\label{subsec:2.2} We collect some results from \'etale cohomology that will be used through out this paper. The results that we discuss can be found in \cite{mil} and \cite{cts}. For a scheme $X$, we denote by $X_{\text{\'et}}$ the small \'etale site of $X$. If $\sF\in Sh(X_{\text{\'et}})$ is a sheaf of abelian groups, then we write $H^{i}(X,\sF)\coloneqq H^{i}(X_{\text{\'et}},\sF)$ for the \'etale cohomology groups with coefficients in $\sF$. If $X=\Spec A$ for some ring $A$, we prefer to write $H^{i}(A,F)$, instead. For a positive integer $n$, we denote by $\mu_{n}$ the subsheaf of $n$-th roots of unity of the multiplicative sheaf $\bbG_{m}$ on $X_{\text{\'et}}$.\par Let $X$ be a scheme and $n$ a positive integer that is invertible on $X$. We shall frequently use the following description of the $n$-torsion subgroup of the Brauer group of $X$, which is induced from the Kummer sequence \[\coker(c_{1}\colon\Pic(X)\to H^{2}(X,\mu_{n}))\cong \br(X)[n].\tag{2.1}\label{2.6}\]\begin{lemma}\label{lem:2.1}Let $X$ be an integral, regular scheme of finite type over a field $k$ (resp. a dvr $R$) and let $Z\subset X$ be a closed subset of codimension one in $X$. Let $U\subset X$ be the complement of $Z$ in $X$. Let $\ell$ be a prime invertible on $X$. If $Z_{1}, Z_{2},\ldots,Z_{t}$ are the components of $Z$ with codimension one in $X$, then we have the following exact sequences:\[0\to\br(X)[\ell^{r}]\to\br(U)[\ell^{r}]\overset{\{\partial_{i}\}}\to\bigoplus_{i=1}^{t}H^{1}(k(Z_{i}),\bbZ/\ell^{r}),\tag{2.2}\label{2.1}\]\[0\to\br(X)\{\ell\}\to\br(U)\{\ell\}\overset{\{\partial_{i}\}}\to\bigoplus_{i=1}^{t}H^{1}(k(Z_{i}),\bbQ_{\ell}/\bbZ_{\ell}).\tag{2.3}\label{2.2}\]\end{lemma}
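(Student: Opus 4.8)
The plan is to derive both exact sequences from the purity (Gysin) sequence in étale cohomology together with the Kummer-theoretic identification \eqref{2.6}. First I would treat the $\ell^{r}$-torsion statement \eqref{2.1}; the $\ell$-primary statement \eqref{2.2} will then follow by passing to the colimit over $r$. Fix the prime $\ell$ invertible on $X$ and let $Z_{1},\dots,Z_{t}$ be the codimension-one components of $Z$, with $U=X\setminus Z$. Since $X$ is regular and integral, the absolute cohomological purity theorem applies to each smooth codimension-one locus $Z_{i}^{\reg}$, and the part of $Z$ of codimension $\geq 2$ contributes nothing to $H^{2}$-level phenomena of the Brauer group. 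The key input is the localization sequence for the pair $(X,U)$ with coefficients $\mu_{\ell^{r}}$, whose residue maps at the generic points of the $Z_{i}$ land, after the purity isomorphism $H^{0}(Z_{i},\bbZ/\ell^{r}(-1))\cong\bbZ/\ell^{r}$, in the Galois cohomology groups $H^{1}(k(Z_{i}),\bbZ/\ell^{r})$.

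The central step is to run the Kummer sequence on $X$ and on $U$ simultaneously and compare. Using \eqref{2.6} one writes $\br(X)[\ell^{r}]=\coker\big(\Pic(X)\to H^{2}(X,\mu_{\ell^{r}})\big)$ and likewise for $U$. The purity/localization sequence
\[
H^{2}(X,\mu_{\ell^{r}})\to H^{2}(U,\mu_{\ell^{r}})\xrightarrow{\{\partial_{i}\}}\bigoplus_{i=1}^{t}H^{1}(k(Z_{i}),\bbZ/\ell^{r})
\]
gives the exactness on the $H^{2}(\mu_{\ell^{r}})$ level. I would then chase the diagram built from the restriction map $\br(X)[\ell^{r}]\to\br(U)[\ell^{r}]$ and the residues $\{\partial_{i}\}$. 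Injectivity of $\br(X)[\ell^{r}]\to\br(U)[\ell^{r}]$ comes from the fact that $U$ is dense open in the regular scheme $X$, so a Brauer class supported on a codimension-one complement must vanish (this is the standard fact that $\br(X)\hookrightarrow\br(U)\hookrightarrow\br(k(X))$ for $X$ regular, via Grothendieck's purity for the Brauer group in low dimension, or via the injectivity $\br(X)\into\br(\eta_{X})$). Exactness in the middle — that a class in $\br(U)[\ell^{r}]$ with all residues zero extends to $X$ — is precisely the content of the purity comparison, combined with the surjection from $H^{2}(X,\mu_{\ell^{r}})$ onto the kernel of $\{\partial_{i}\}$, after quotienting by the images of $\Pic(X)\to\Pic(U)$.

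For the $\ell$-primary version \eqref{2.2}, I would simply take the direct limit of \eqref{2.1} over $r$ along the inclusions $\bbZ/\ell^{r}\into\bbZ/\ell^{r+1}$, using that $\br(X)\{\ell\}=\varinjlim_{r}\br(X)[\ell^{r}]$, that $\varinjlim_{r}\bbZ/\ell^{r}=\bbQ_{\ell}/\bbZ_{\ell}$, and that filtered colimits are exact. The same argument works verbatim over a dvr $R$ in place of a field, since purity and the Kummer sequence are available there as well. The main obstacle I anticipate is handling the higher-codimension part of $Z$ cleanly: I must argue that removing a closed subset of codimension $\geq 2$ does not change the Brauer group (so that only the codimension-one components $Z_{i}$ produce residues). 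For $X$ regular this is the purity statement $\br(X)\cong\br(X\setminus W)$ for $\codim W\geq 2$, which reduces the problem to the pure codimension-one case where the Gysin sequence directly yields the residue maps; I would cite this purity result rather than reprove it.
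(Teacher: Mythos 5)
Your proposal is correct and follows exactly the standard route---Gabber's absolute purity giving the Gysin/localization sequence for $\mu_{\ell^r}$, comparison with the Kummer identification \eqref{2.6}, reduction to the pure codimension-one case via purity of the Brauer group, and a colimit over $r$ for the $\ell$-primary version---which is precisely the argument of \cite[Section 3.7]{cts} that the paper defers to by citation rather than reproving. No discrepancy to report.
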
\begin{proof} See \cite[Section\ 3.7]{cts} for a detailed proof.\end{proof}\begin{lemma}\label{lem:2.2} Let $f\colon X'\to X$ be a flat morphism of relative dimension $n\coloneqq\dim X'-\dim X$, between integral, regular schemes of finite type over a field $k$ (resp. a dvr $R$). For a prime divisor $Z\subset X$, we write $f^{*}Z=\sum_{i=1}^{t}\lambda_{i}Z'_{i}$ for its pull-back, where $Z'_{i}$ are the prime divisors on $X'$ supported on $f^{-1}(Z)$ and $\lambda_{i}$ denote their multiplicities. Set $U\coloneqq X\setminus Z$ and $U'\coloneqq f^{-1}(U)\subset X'$. Then for any prime $\ell$ invertible on $X$, the following diagram commutes:\[\begin{tikzcd}
{\br(U)[\ell^{r}]} \arrow[r, "\partial"] \arrow[d, "f^{*}"'] & {H^{1}(k(Z),\bbZ/\ell^{r})} \arrow[d, "{\{\lambda_{i}f|_{Z'_{i}}^{*}\}_{i}}"'] \\
{\br(U')[\ell^{r}]} \arrow[r, "\{\partial_{i}\}"]                    & {{\bigoplus_{i=1}^{t}H^{1}(k(Z'_{i}),\mathbb{Z}/\ell^{r})}.}          
\end{tikzcd}\tag{2.4}\label{2.3}\]\end{lemma}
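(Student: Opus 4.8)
The plan is to localise the whole diagram at the generic points of the divisors $Z$ and $Z'_{i}$, reducing the claim to the standard functoriality of the residue map for an extension of discrete valuation rings, where the multiplicities $\lambda_{i}$ appear as ramification indices. First I would use that the residue $\partial$ is local at the generic point $\eta$ of $Z$: since $X$ is regular, $A\coloneqq\sO_{X,\eta}$ is a discrete valuation ring with fraction field $k(X)$ and residue field $k(Z)$, and $\partial$ factors as $\br(U)[\ell^{r}]\to\br(k(X))[\ell^{r}]\xrightarrow{\partial_{A}}H^{1}(k(Z),\bbZ/\ell^{r})$, where $\partial_{A}$ is the boundary map attached to the trait $\Spec A$ (this is the instance of Lemma \ref{lem:2.1} for $X=\Spec A$). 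The same description applies to each $\partial_{i}$ through the discrete valuation ring $B_{i}\coloneqq\sO_{X',\eta_{i}}$, with $\eta_{i}$ the generic point of $Z'_{i}$, fraction field $k(X')$ and residue field $k(Z'_{i})$. Thus the large square decomposes into the $i$-indexed squares, and it is enough to check each of them.

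Next I would observe that $f$ induces, for every $i$, a flat local homomorphism of discrete valuation rings $A\to B_{i}$. Indeed, writing $z\coloneqq f(\eta_{i})\in Z$, flatness gives the dimension formula $1=\dim\sO_{X',\eta_{i}}=\dim\sO_{X,z}+\dim\sO_{X'_{z},\eta_{i}}$, which forces $\dim\sO_{X,z}\leq 1$; since $z\in Z$ and $Z$ is a prime divisor, this gives $z=\eta$, so $f$ restricts to $\Spec B_{i}\to\Spec A$, inducing $\res_{k(X')/k(X)}$ on Brauer groups (this is $f^{*}$) and $\res_{k(Z'_{i})/k(Z)}$ on $H^{1}$ (this is $f|_{Z'_{i}}^{*}$). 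By the definition of $f^{*}Z=\sum_{i}\lambda_{i}Z'_{i}$, the multiplicity $\lambda_{i}$ is the order of vanishing along $Z'_{i}$ of a uniformiser $\pi_{A}$ of $A$, i.e. the ramification index $\lambda_{i}=v_{B_{i}}(\pi_{A})$ of the extension $A\subset B_{i}$.

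With these reductions the commutativity of the $i$-th square is exactly the compatibility of residues with restriction along an extension of discrete valuation rings of ramification index $e=\lambda_{i}$, namely
\[
\partial_{B_{i}}\circ\res_{k(X')/k(X)}=\lambda_{i}\cdot\bigl(\res_{k(Z'_{i})/k(Z)}\circ\partial_{A}\bigr).
\]
I regard this local identity as the heart of the matter and the one place where the factor $\lambda_{i}$ genuinely intervenes; it is recorded in \cite[Section 3.7]{cts}. As a sanity check one may verify it on tame symbols $\{a,b\}$ using $v_{B_{i}}=\lambda_{i}\,v_{A}$ on $k(X)^{\times}$: the explicit tame-symbol formula produces the factor $\lambda_{i}$ directly, the resulting sign being harmless because $\lambda_{i}^{2}\equiv\lambda_{i}\pmod 2$. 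Running this over all $i$ yields the commutativity of the full diagram. The main obstacle is thus purely the bookkeeping of the local residue formula with its ramification factor; the geometric input (flatness forcing $f(\eta_{i})=\eta$ and identifying $\lambda_{i}$ with the ramification index) is comparatively routine.
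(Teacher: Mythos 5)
Your argument is correct and takes essentially the same route as the paper: the paper's proof is just the citation \cite[Theorem 3.7.5]{cts}, which is precisely the local compatibility of residues with restriction along an extension of discrete valuation rings (with the multiplicity entering as the ramification index) that you reduce to and then invoke. The reductions you supply --- localising at the generic points, flatness forcing $f(\eta_{i})=\eta$, and identifying $\lambda_{i}=v_{B_{i}}(\pi_{A})$ --- are the standard unpacking of that citation, so there is nothing genuinely different here.
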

\begin{proof} This follows from \cite[Theorem 3.7.5]{cts}.\end{proof}
\begin{lemma}\label{lem:2.3} Let $R$ be a discrete valuation ring with residue field $k$ and fraction field $K$. Let $\sX\to\Spec R$ be a triple-point free strictly semi-stable $R$-family. Then we have the following Mayer-Vietoris exact sequence:\[\dots\to\bigoplus_{i}\Pic(\sX_{0,i})\overset{r_{1}}\to\bigoplus_{i<j}\Pic(\sX_{0,i}\cap\sX_{0,j})\to\br(\sX_{0})\overset{r_{2}}\to\bigoplus_{i}\br(\sX_{0,i})\to \dots,\tag{2.5}\label{2.4}\] where the map $r_{1}$ sends $(\sL_{i})$ to $(\sL_{i}|_{\sX_{0,i}\cap\sX_{0,j}}-\sL_{j}|_{\sX_{0,i}\cap\sX_{0,j}})$ and $r_{2}$ takes $\alpha$ to $(\alpha|_{\sX_{0,i}})$.\end{lemma}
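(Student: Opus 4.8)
The plan is to realize the sequence \eqref{2.4} as the long exact sequence in \'etale cohomology associated to the Mayer--Vietoris covering of the special fibre $\sX_{0}$ by its irreducible components. Write $\sX_{0}=\bigcup_{i}\sX_{0,i}$, and set $\nu\colon\coprod_{i}\sX_{0,i}\to\sX_{0}$ for the normalization and $\coprod_{i<j}\sX_{0,i}\cap\sX_{0,j}\to\sX_{0}$ for the (disjoint union of) double intersections. The triple-point free hypothesis is exactly what guarantees that the nerve of the covering is one-dimensional: there are no triple overlaps, so the \v{C}ech--type complex truncates after the double intersections. First I would record the resulting short exact sequence of \'etale sheaves on $\sX_0$,
\[
0\to\bbG_{m,\sX_{0}}\to\bigoplus_{i}\nu_{i*}\bbG_{m,\sX_{0,i}}\to\bigoplus_{i<j}\nu_{ij*}\bbG_{m,\sX_{0,i}\cap\sX_{0,j}}\to 0,
\]
where exactness on the left is the statement that a function regular on $\sX_0$ is the same as a compatible family of functions on the components, and exactness on the right uses that the components meet in smooth (hence in particular connected enough) double loci with no triple points to obstruct surjectivity. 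The key point to verify here is right-exactness at the level of sheaves, which is a local (stalk-wise) computation on the strictly semi-stable model.

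**Extracting the long exact sequence.** Taking \'etale cohomology of this short exact sequence yields a long exact sequence. Since $\nu_{i}$ and $\nu_{ij}$ are finite morphisms, the pushforwards are exact on the small \'etale site, so $H^{q}(\sX_{0},\nu_{i*}\bbG_{m})=H^{q}(\sX_{0,i},\bbG_{m})$ and likewise for the double intersections. Reading off degrees $q=1,2$ and invoking $H^{1}(-,\bbG_{m})=\Pic(-)$ together with $H^{2}(-,\bbG_{m})=\br(-)$ (the smooth components and their smooth intersections being the kind of varieties where this identification is standard), I would obtain precisely
\[
\dots\to\bigoplus_{i}\Pic(\sX_{0,i})\overset{r_{1}}\to\bigoplus_{i<j}\Pic(\sX_{0,i}\cap\sX_{0,j})\to\br(\sX_{0})\overset{r_{2}}\to\bigoplus_{i}\br(\sX_{0,i})\to\dots.
\]
Finally I would check that the connecting and restriction maps induced by the sheaf sequence agree with the stated formulas: $r_{2}$ is visibly restriction $\alpha\mapsto(\alpha|_{\sX_{0,i}})$, being the map on $H^{2}$ induced by $\bbG_{m}\to\bigoplus_{i}\nu_{i*}\bbG_{m}$, while $r_{1}$ on $\Pic$ is the alternating restriction $(\sL_{i})\mapsto(\sL_{i}|_{\sX_{0,i}\cap\sX_{0,j}}-\sL_{j}|_{\sX_{0,i}\cap\sX_{0,j}})$ coming from the second map of sheaves.

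**Main obstacle.** The step I expect to be most delicate is establishing exactness of the sheaf sequence on the right, i.e.\ surjectivity of $\bigoplus_{i}\nu_{i*}\bbG_{m}\to\bigoplus_{i<j}\nu_{ij*}\bbG_{m}$ as \'etale sheaves. This must be checked on strict henselizations, and it is precisely where the triple-point free hypothesis is indispensable: at a point lying on two components the local picture is a normal crossing $\{xy=0\}$ and one can lift a unit on the intersection, but if three components met, the nerve would have a $2$-simplex and the complex would no longer terminate, so the simple two-term shape of \eqref{2.4} would fail. I would therefore argue the stalk computation case-by-case according to how many components pass through a given geometric point (one or two, by hypothesis), reducing the surjectivity to the elementary fact that units extend across a simple normal crossing when no triple intersection is present.
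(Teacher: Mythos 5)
Your proposal is correct and follows essentially the same route as the paper: both arguments rest on the short exact sequence of \'etale sheaves $1\to\bbG_{m,\sX_{0}}\to p_{*}\bbG_{m}\to\bigoplus_{i<j}\rho_{i,j*}\bbG_{m}\to 1$ (whose right-exactness is where the triple-point-free hypothesis enters), the exactness of pushforward along the finite maps $p$ and $\rho_{i,j}$ to identify $H^{s}(\sX_{0},p_{*}\bbG_{m})$ with $H^{s}$ of the components and double loci, and then the associated long exact sequence. The extra stalk-wise verification you sketch is exactly the content the paper leaves implicit.
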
\begin{proof} Consider the morphisms $p\colon Y\coloneqq\bigsqcup_{i}\sX_{0,i}\to\sX_{0}$ and $\rho_{i,j}\colon X_{i,j}\coloneqq\sX_{0,i}\cap\sX_{0,j}\hookrightarrow\sX_{0}$ for $i\neq j$. The triple point free assumption implies exactness of the following sequence of sheaves for the \'etale topology on $\sX_{0}$:\[1\to\bbG_{m,\sX_{0}}\to p_{*}\bbG_{m,Y}\to \bigoplus_{i<j}{\rho_{i,j}}_{*}\bbG_{m,X_{i,j}}\to 1. \tag{2.6}\label{2.5}\] Since the morphisms $p$ and $\rho_{i,j}$ are finite, the push-forwards $p_{*}$ and ${\rho_{i,j}}_{*}$ are both exact functors in the \'etale topology (see \cite[Corollary II.3.6]{mil}). Hence, $R^{s}p_{*}=R^{s}{\rho_{i,j}}_{*}=0$ for any $s\geq 1$ and the Leray spectral sequence gives rise to the isomorphisms $H^{s}(\sX_{0},p_{*}\bbG_{m})\cong H^{s}(Y,\bbG_{m})$ and  $H^{s}(\sX_{0},{\rho_{i,j}}_{*}\bbG_{m})\cong H^{s}(X_{i,j},\bbG_{m})$. Finally, the associated long exact sequence of \eqref{2.5} together with the use of the above isomorphisms lead to the sequence \eqref{2.4}.\end{proof}\section{Main Technical Result}\label{sec:3} Theorem \ref{thm:1.4} is an immediate corollary of the following.\begin{theorem}\label{thm:3.1} Let $q\colon\sY\to\sX$ be a finite \'etale Galois covering of strictly semi-stable $R$-schemes, such that the \'etale cover $q_{0}$ is trivial over $\sX^{\sing}_{0}\cap\sX_{0,i}$ for every irreducible component $\sX_{0,i}$ of $\sX_{0}$. Fix a prime number $\ell$, that is invertible in $R$.  Pick a class $\alpha\in\br(\sX_{\bar{\eta}})\{\ell\}$. Then the following are equivalent:\begin{enumerate}\item\label{1} The class $\alpha$ pulls-back to zero, i.e. $q_{\bar{\eta}}^{*}\alpha=0$.\item\label{2} Up to some finite ramified base change of discrete valuation rings $R\subset\tilde{R}$ followed by a resolution of $\sX_{\tilde{R}},$ the class $\alpha$ lifts to a class $\tilde{\alpha}$ in $\br(\sX)\{\ell\}$ with $q^{*}(\tilde{\alpha})=0$.\end{enumerate}\end{theorem}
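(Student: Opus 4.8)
The plan is to treat the two implications separately, the whole content lying in $(\ref{1})\Rightarrow(\ref{2})$. The reverse implication is formal: a ramified base change $R\subset\tilde R$ does not alter the algebraic closure $\bar K$ of the fraction field, so the geometric generic fibre $\sX_{\bar\eta}$ and the covering $q_{\bar\eta}$ are unaffected by the base change and the subsequent resolution. Hence if $\alpha$ lifts to $\tilde\alpha\in\br(\sX)\{\ell\}$ with $q^*\tilde\alpha=0$, restricting $\tilde\alpha$ to $\sX_{\bar\eta}$ yields $q^*_{\bar\eta}\alpha=0$.

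For $(\ref{1})\Rightarrow(\ref{2})$ I would first spread the class out. Since $\br(\sX_{\bar\eta})\{\ell\}=\varinjlim_{K'/K}\br(\sX_{K'})\{\ell\}$ and $q^*_{\bar\eta}\alpha=0$, after replacing $R$ by a discrete valuation ring finite over it I may assume that $\alpha$ is already defined on the generic fibre $\sX_\eta$ and that $q^*_\eta\alpha=0$ in $\br(\sY_\eta)$. Now let $\tilde R/R$ be a totally ramified extension of degree $e=\ell^r$, where $\ell^r\alpha=0$, let $\sX'$ be a strictly semi-stable resolution of $\sX_{\tilde R}$, and write primes for the resulting models over $\tilde R$; set $\alpha'\in\br(\sX'_\eta)\{\ell\}$ for the pullback of $\alpha$, so that $q'^*\alpha'=0$. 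By the exact sequence \eqref{2.2} of Lemma \ref{lem:2.1}, applied to the regular scheme $\sX'$ and its special fibre, the restriction $\br(\sX')\{\ell\}\hookrightarrow\br(\sX'_\eta)\{\ell\}$ is injective with image exactly those classes whose residues $\partial_E\alpha'\in H^1(k(E),\bbQ_\ell/\bbZ_\ell)$ along all components $E$ of $\sX'_0$ vanish. It therefore suffices to prove that every such residue is zero: the lift $\tilde\alpha\in\br(\sX')\{\ell\}$ then exists, restricts to $\alpha$ on $\sX'_{\bar\eta}=\sX_{\bar\eta}$, and satisfies $q'^*\tilde\alpha=0$ by the same injectivity for $\sY'$.

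The components of $\sX'_0$ are of two kinds: the strict transforms of the original components $\sX_{0,i}$ and the exceptional components created by resolving the $A_{e-1}$-singularities that the base change introduces along the double locus $\sX^{\sing}_0=\bigcup_{i<j}(\sX_{0,i}\cap\sX_{0,j})$. For a strict transform $Z'_i$ I compute through the flat base-change map $f\colon\sX'\to\sX$: since $Z'_i$ dominates $\sX_{0,i}$ with ramification index $e$ and lies over no other component, Lemma \ref{lem:2.2} gives $\partial_{Z'_i}\alpha'=e\cdot(f|_{Z'_i})^*\partial_i\alpha=\ell^r\cdot(f|_{Z'_i})^*\partial_i\alpha=0$, the point being that $\partial_i\alpha$ is $\ell^r$-torsion. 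For an exceptional component $E$, which is a $\bbP^1$-bundle over a stratum of $\sX^{\sing}_0\cap\sX_{0,i}$, I use the covering instead. A finite \'etale cover of a $\bbP^1$-bundle is pulled back from the base, so $q'|_E$ is the pullback of $q_0$ over that stratum; by hypothesis this is a trivial $G$-torsor, whence $q'|_E$ is trivial and $q'^{-1}(E)=\bigsqcup_{g\in G}E_g$ with each $E_g\to E$ an isomorphism. Applying Lemma \ref{lem:2.2} to the \'etale map $q'$ with $Z=E$, so that all multiplicities equal $1$, gives $0=\partial_{E_g}(q'^*\alpha')=(q'|_{E_g})^*\partial_E\alpha'$; since $(q'|_{E_g})^*$ is an isomorphism on $H^1$, we conclude $\partial_E\alpha'=0$.

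I expect the main obstacle to be the geometric analysis underpinning the third paragraph. One has to pin down the components of $\sX'_0$ produced by semi-stable reduction after the ramified base change --- verifying that the strict transforms acquire ramification exactly $\ell^r$ and that every genuinely new component lies over the double locus $\sX^{\sing}_0$, where the hypothesis on $q_0$ applies --- and one must justify that $q'$ restricted to the exceptional $\bbP^1$-bundles really is the trivial pullback of $q_0$. This is also what forces the two residue computations to differ: over the old components the cover may carry nontrivial $\ell$-monodromy, so the pull-push relation alone cannot kill the residue and one must exploit the base-change multiplicity $\ell^r$; it is precisely over the new components, where no such multiplicity is available, that the triviality hypothesis on $q_0$ becomes indispensable.
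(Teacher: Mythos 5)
Your proposal is correct and follows essentially the same route as the paper's proof: spread the class out to a finite extension, perform the totally ramified degree-$\ell^{r}$ base change so that the residues along the strict transforms of the old components are killed by the multiplicity $\ell^{r}$ via Lemma \ref{lem:2.2}, and kill the residues along the new components (which lie over the double locus) using the triviality of the cover there together with functoriality of residues, then lift via the purity sequence of Lemma \ref{lem:2.1}. The only cosmetic difference is that where you invert the isomorphism $(q'|_{E_{g}})^{*}$ on $H^{1}$ directly, the paper phrases the same step as the push-pull identity $(\tilde{q_{g}})_{*}\tilde{q_{g}}^{*}=\id$.
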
\begin{proof} The implication \eqref{2} $\implies$ \eqref{1} is clear by functoriality of pullbacks.\par We prove the implication \eqref{1} $\implies$ \eqref{2}. We may assume that $\alpha\in\br(\sX_{\bar{\eta}})[\ell^{r}]$ for some $r\geq 1$, such that $\ell^{r}$ divides $d\coloneqq\deg(q)$. By \cite[Section 2.2.2]{cts}, we have \[\br(\sX_{\bar{\eta}})=\lim_{\underset{K\subset L}\longrightarrow}\br(\sX_{L}),\] where $\sX_{L}=\sX_{\eta}\times_{K} L$ and the co-limit is taken over all finite seperable field extensions $L$ of $K$. Hence, we can find a finite seperable field extension $L$ of $K$, such that $\alpha\in\im(\br(\sX_{L})[\ell^{r}]\to\br(\sX_{\bar{\eta}})[\ell^{r}])$. We pick a lift of $\alpha$ in $\br(\sX_{L})[\ell^{r}]$ and denote it again by the same symbol. Since $q^{*}_{\bar{\eta}}\alpha=0,$ the same reasoning as above gives $q^{*}_{L}\alpha=0$ (up to replacing $L$ if neccessary with some finite seperable field extension). We let $\tilde{R}\subset L$ be the normalization of $R$ in $L$. Then $\tilde{R}$ is a Dedekind domain finite over $R$ (see \cite[Proposition 12.53]{gw}). Choose a maximal ideal $\mathfrak{m}\subset R$ and consider the discrete valuation ring $R'\coloneqq \tilde{R}_{\mathfrak{m}}$. We perform a base change corresponding to the extension of discrete valuation rings $R'$ of $R$ and note that the models $\sX$ and $\sY$ may become singular. By \cite[Proposition 2.2]{Har}, both $\sX_{R'}$ and $\sY_{R'}$ can be made into strictly semi-stable $R'$-schemes. Specifically, by repeatedly blowing up all non-Cartier components of the special fibre of $\sY\to\Spec R'$, we can arrive at a strictly semi-stable model $\sY'\to\Spec R'$. As we blow up along $G$-invariant centers, the action on $\sY$ given by the Galois group $G\coloneqq\Aut(q)$, can be naturally lifted to a fixed point free action on $\sY'$. Passing to the quotient $\sX'\coloneqq\sY'/G$, we obtain a semi-stable model for $\sX_{R'}$. In particular, according to the above, we may assume that $\alpha\in\br(\sX_{\eta})[\ell^{r}]$ and $q_{\eta}^{*}\alpha=0\in\br(\sY_{\eta})[\ell^{r}]$.\par Fix a uniformizer $\pi\in R$ and perform the $\ell^{r}:1$ base change $\pi\mapsto\pi^{\ell^{r}}$ followed by resolving the singularities, as above. We keep the notations $\tilde{\sX},\tilde{\sY}$ and $\tilde{q}\colon\tilde{\sY}\to\tilde{\sX}$ for the resulting models and covering over $R$. Write $\sX_{0}=\sum_{t\in T}P_{t}$ for the special fibre of $\sX\to R$ and denote by $\tilde{P}_{t}\subset\tilde{\sX_{0}}$ the irreducible component that maps isomorphically onto $P_{t}$ via the composition $f\colon\tilde{\sX}\to\sX_{R}\to\sX$ for every $t\in T$. By Lemma \ref{lem:2.2}, we have the commutative diagram:\begin{center}\begin{tikzcd}
{\br(\sX_{\eta})[\ell^{r}]} \arrow[r, "\{\partial_{t}\}"] \arrow[d, "f^{*}"] & {\oplus_{t\in T}H^{1}(k(P_{t}),\bbZ/\ell^{r})} \arrow[d, "\{\ell^{r}f^{*}\}=0"] \\
{\br(\tilde{\sX}_{\eta})[\ell^{r}]} \arrow[r, "\{\partial_{t}\}"]            & {\oplus_{t\in T}H^{1}(k(\tilde{P}_{t}),\bbZ/\ell^{r})},           
\end{tikzcd}\end{center}from which one obtains $\partial_{t}f^{*}\alpha=0$ for all $t\in T$. It remains to check that $\tilde{\alpha}\coloneqq f^{*}\alpha\in\br(\tilde{\sX}_{\eta})[\ell^{r}]$ has trivial residue along each irreducible component that has been introduced by resolving the singularities that appeared due to the base change. Over a new component the restriction of $\tilde{q}$ is trivial. It follows that for such a component $V\subset\tilde{\sX}_{0}$, if we write $\tilde{q}^{-1}(V)\cong\bigsqcup_{g\in G} \{g\}\times V$ and let $\tilde{q_{g}}\colon\{g\}\times V\to V $ be the restriction of $\tilde{q}$ to $\{g\}\times V$, then $0=(\tilde{q_{g}})_{*}(\partial_{g}\tilde{q_{\eta}}^{*}\tilde{\alpha})=(\tilde{q_{g}})_{*}\tilde{q_{g}}^{*}(\partial_{V}\tilde{\alpha})=\partial_{V}\tilde{\alpha}\in H^{1}(k(V),\bbZ/\ell^{r})$. Here, the first equality holds, because $\tilde{q_{\eta}}^{*}\tilde{\alpha}=0$, whereas for the last one, we use that $(\tilde{q_{g}})_{*}\tilde{q_{g}}^{*}=\id$ on $V$. Finally, the lift $\tilde{\alpha}\in\br(\tilde{\sX})\{\ell\}$ satisfies $\tilde{q}^{*}\tilde{\alpha}=0\in\br(\tilde{\sY})$, as the restriction of $\tilde{q}^{*}\tilde{\alpha}$ to $\tilde{\sY_{\eta}}$ is zero and the natural pull-back map $\br(\tilde{\sY})\to\br(\sY_{\eta})$ is injective. The proof now is complete.\end{proof}\section{Brauer Group of the Total space and a specialization argument}\label{sec:4}Let $p\colon\sX\to\Spec R$ be a strictly semi-stable scheme over a henselian discrete valuation ring $R$ with seperably closed residue field $k$. The main aim of this section is the study of the restriction map $\br(\sX)\to\br(\sX_{0})_{\tor}$.\begin{lemma}\label{Lem:4.1} Let $p\colon\sX\to\Spec R$ be a strictly semi-stable scheme over a henselian discrete valuation ring $R$ with seperably closed residue field $k$. Assume that $H^{1}(X,\sO_{X})=0$ and $b_{2}=\rho$, where $b_{2}$ is the second $\ell$-adic betti number of $X\coloneqq\sX_{\bar{\eta}}$ for some prime $\ell$ invertible in $R$ and $\rho$ is the rank of the N\'eron severi group of $X$. Then for any prime $\ell$ invertible in $R$, restriction yields an isomorphism $\br(\sX)\{\ell\}\to\br(\sX_{0})\{\ell\}.$\end{lemma}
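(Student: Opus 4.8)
The plan is to compute both Brauer groups via the Kummer sequence \eqref{2.6} and compare them, using the proper base change theorem to relate the cohomology of the total space $\sX$ to that of the special fibre $\sX_{0}$. First I would pass to $\ell$-primary torsion throughout and work with the isomorphism $\br(\sX)[\ell^{r}]\cong\coker\bigl(c_{1}\colon\Pic(\sX)\to H^{2}(\sX,\mu_{\ell^{r}})\bigr)$, and similarly for $\sX_{0}$. Since $R$ is henselian with separably closed residue field and $p\colon\sX\to\Spec R$ is proper, the proper base change theorem gives isomorphisms $H^{i}(\sX,\mu_{\ell^{r}})\xrightarrow{\sim}H^{i}(\sX_{0},\mu_{\ell^{r}})$ for all $i$ (here $\mu_{\ell^{r}}$ on $\sX_{0}$ is constant as $k$ is separably closed). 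This handles the cohomological terms; the main work is to control the Picard groups and the comparison of the two cokernels.

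Next I would analyze the map $\Pic(\sX)\to\Pic(\sX_{0})$ and its interaction with $c_{1}$. The hypotheses $H^{1}(X,\sO_{X})=0$ and $b_{2}=\rho$ on the geometric generic fibre are what force the relevant cohomology to be purely algebraic. Concretely, $b_{2}=\rho$ means that on $X=\sX_{\bar\eta}$ the cycle class map $\NS(X)\otimes\bbQ_{\ell}\to H^{2}(X,\bbQ_{\ell}(1))$ is surjective, so that $H^{2}(X,\mu_{\ell^{r}})$ is, up to torsion controlled by $\br(X)[\ell^{r}]$, accounted for by divisor classes; and $H^{1}(X,\sO_{X})=0$ eliminates the continuous (Picard-variety) part of $\Pic$, so that $\Pic$ is finitely generated and reduces to $\NS$. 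The strictly semi-stable hypothesis ensures $\sX$ is regular, so restriction to the generic fibre $\Pic(\sX)\to\Pic(\sX_{\eta})$ is surjective (every divisor on the generic fibre extends, as $\sX$ is regular and the complement of $\sX_{\eta}$ is the special fibre, a divisor). I would use this to show that every algebraic class on $X$ arises by restriction from $\sX$, so that the image of $c_{1}$ on $\sX$ surjects onto the image of $c_{1}$ on $\sX_{0}$.

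I would then assemble the comparison into a commutative square relating the two Kummer cokernels, with vertical restriction maps and the horizontal proper-base-change isomorphism on $H^{2}(-,\mu_{\ell^{r}})$. A diagram chase, using surjectivity of $\Pic(\sX)\to\Pic(\sX_{0})$ (modulo the part that dies in cohomology) together with the proper-base-change isomorphism, yields that $\br(\sX)[\ell^{r}]\to\br(\sX_{0})[\ell^{r}]$ is an isomorphism; passing to the colimit over $r$ gives the claim for $\br(\sX)\{\ell\}\to\br(\sX_{0})\{\ell\}$.

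The hard part will be the control of the Picard groups and verifying that $\Pic(\sX)\to\Pic(\sX_{0})$ is surjective onto the relevant subgroup. The delicate point is that $\Pic(\sX_{0})$ of a reducible special fibre can carry classes not coming from $\sX$ (and specialization of Picard groups need not be surjective in general); here the hypotheses $b_{2}=\rho$ and $H^{1}(X,\sO_{X})=0$, combined with regularity of the total space and properness, are precisely what rigidifies the situation. I expect one must invoke the comparison of $H^{2}(\sX,\mu_{\ell^{r}})$ with $H^{2}(\sX_{0},\mu_{\ell^{r}})$ carefully to argue that any cohomology class on $\sX_{0}$ lifting to $\sX$ and coming from a divisor on $\sX_{0}$ already comes from a divisor on $\sX$, so that the two cokernels genuinely agree.
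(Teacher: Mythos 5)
Your overall architecture matches the paper's: both arguments compare the Kummer-sequence presentations of $\br(\sX)[\ell^{r}]$ and $\br(\sX_{0})[\ell^{r}]$ through the proper base change isomorphism $H^{2}(\sX,\mu_{\ell^{r}})\cong H^{2}(\sX_{0},\mu_{\ell^{r}})$, and both locate the real content in showing that the image of $c_{1}$ on $\Pic(\sX_{0})$ is no larger than the image of $c_{1}$ on $\Pic(\sX)$ under that identification. But that is precisely the step you flag as ``the hard part'' and leave unresolved, and it is the entire content of the lemma, so the proposal has a genuine gap rather than being a complete proof. Moreover, the route you sketch for closing it --- arguing that any divisor class on $\sX_{0}$ whose cohomology class lifts to $\sX$ already comes from a divisor on $\sX$ --- is not how one closes it, and as you yourself observe, $\Pic(\sX)\to\Pic(\sX_{0})$ genuinely need not be surjective for a reducible special fibre, so a direct attack on the Picard groups is unpromising.

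The paper's resolution is to prove that \emph{both} $\ell$-adic cycle class maps $c_{\ell,1}\colon\Pic(\sX)\otimes\bbZ_{\ell}\to H^{2}(\sX,\bbZ_{\ell}(1))$ and $c_{\ell,1}\colon\Pic(\sX_{0})\otimes\bbZ_{\ell}\to H^{2}(\sX_{0},\bbZ_{\ell}(1))$ are surjective; then both images are everything, the comparison problem evaporates, and each Brauer group is the cokernel of $H^{2}(-,\bbZ_{\ell}(1))\to H^{2}(-,\mu_{\ell^{r}})$, compatibly under proper base change. Establishing surjectivity on $\sX$ requires several ingredients absent from your plan: (i) the Hochschild--Serre spectral sequence for continuous $\ell$-adic cohomology, with $E^{p,1}_{2}=0$ because $H^{1}(X,\sO_{X})=0$ forces $\Pic(X)=\NS(X)$ and hence $T_{\ell}\Pic(X)=0$, and $E^{2,0}_{2}=E^{3,0}_{2}=0$ because $cd_{\ell}(K)\leq 1$; this gives $H^{2}_{cont}(\sX_{\eta},\bbZ_{\ell}(1))\cong H^{2}(X,\bbZ_{\ell}(1))^{G}$; (ii) $b_{2}=\rho$ forces $T_{\ell}\br(X)=0$, so $H^{2}(X,\bbZ_{\ell}(1))$ is algebraic; (iii) the henselian hypothesis produces a section of $\sX\to\Spec R$, whence $\Pic(X)^{G}=\Pic(\sX_{\eta})$ and the $G$-invariant algebraic classes descend to honest line bundles on $\sX_{\eta}$ --- your assertion that ``every algebraic class on $X$ arises by restriction from $\sX$'' silently assumes this descent, which is not automatic; (iv) the Gysin sequence for $\sX_{0}\subset\sX$ together with surjectivity of $\Pic(\sX)\to\Pic(\sX_{\eta})$ then shows $H^{2}(\sX,\bbZ_{\ell}(1))$ is algebraic; and (v) surjectivity on $\sX_{0}$ follows from surjectivity on $\sX$ by functoriality of $c_{\ell,1}$ under the isomorphism $H^{2}(\sX,\bbZ_{\ell}(1))\cong H^{2}(\sX_{0},\bbZ_{\ell}(1))$. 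Until these steps (or a genuine substitute) are supplied, the concluding diagram chase in your plan cannot be carried out.
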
\begin{proof} We follow the proof in \cite[Proposition 7.2]{Stef}. In what follows, $H^{i}_{cont}$ stands for Jannsen's continuous \'etale cohomology \cite{Jan}. The proper base change theorem (see \cite[Corollary VI.2.7]{mil}) gives rise to the natural isomorphisms\[H^{i}(\sX,\mu_{\ell^{r}})\cong H^{i}(\sX_{0},\mu_{\ell^{r}}) \tag{4.1}\label{4.1}\]\[H_{cont}^{i}(\sX,\bbZ_{\ell}(1))\cong H^{i}(\sX_{0},\bbZ_{\ell}(1)).\tag{4.2}\label{4.2}\] Note that \eqref{4.2} follows from \eqref{4.1}. Indeed, \eqref{4.1} implies that the groups $H^{i}(\sX,\mu_{\ell^{r}})$ are finite (see \cite[Corollary VI.2.8]{mil}) and thus, in this case Jannsen's continuous \'etale cohomology coincides with the usual $\ell$-adic cohomology (see \cite[(0.2)]{Jan}).\par We prove that the $\ell$-adic cycle class map $c_{\ell,1}\colon\Pic(\sX)\otimes\bbZ_{\ell}\to H^{2}(\sX,\bbZ_{\ell}(1))$ is surjective. To this end, we compute $H^{2}_{cont}(\sX_{\eta},\bbZ_{\ell}(1))$, using the Hochschild-Serre spectral sequence (see \cite[Corollary 3.4]{Jan})\[E^{p,q}_{2}\coloneqq H^{p}_{cont}(G,H^{q}(X,\bbZ_{\ell}(1)))\implies H^{p+q}_{cont}(\sX_{\eta},\bbZ_{\ell}(1)), \tag{4.3}\label{4.3}\] where $G\coloneqq\Gal(K^{sep}/K)$ is the absolute Galois group of the fraction field $K$ of $R$ and $X\coloneqq\sX_{\bar{\eta}}$. We observe that \[H^{1}(X,\bbZ_{\ell}(1))\coloneqq\lim_{\underset{r}\longleftarrow} H^{1}(X,\mu_{\ell^{r}})\cong\lim_{\underset{r}\longleftarrow}\Pic(X)[\ell^{r}]\eqqcolon T_{\ell}\Pic(X),\tag{4.4}\label{4.4}\]where the isomorphism is canonical and is induced from the Kummer sequence (see \cite[Corollary 4.18]{mil}). Since $H^{1}(X,\sO_{X})=0$, the identity component of the Picard scheme $\Pic^{0}_{X/\bar{K}}$ is just a point (see \cite[Corollary 5.13]{kl}) and so $\Pic(X)=\NS(X)$. The N\'eron Severi group of a proper variety over an algebraically closed field is finitely generated (see \cite[Theorem V.3.25]{mil}). Hence, the torsion subgroup of $\Pic(X)$ is finite and thus $T_{\ell}\Pic(X)=0$. Therefore, \eqref{4.4} gives $E^{p,1}_{2}=0$ for every $p\geq 0$.\par Next, we calculate $E^{2,0}_{2}=H^{2}_{cont}(G,\bbZ_{\ell}(1))$. Consider the Gysin sequence
\[H^{1}(R,\mu_{\ell^{r}})\to H^{1}(G,\mu_{\ell^{r}})\overset{\partial}\to H^{0}(k,\bbZ/\ell^{r})\to H^{2}(R,\mu_{\ell^{r}}).\tag{4.5}\label{4.5}\] Proper base change (see \cite[Corollary VI.2.7]{mil}) together with $k^{sep}=k$, gives $H^{1}(R,\mu_{\ell^{r}})\cong H^{1}(k,\mu_{\ell^{r}})\cong k^{*}/(k^{*})^{\ell^{r}}=0$ and $H^{2}(R,\mu_{\ell^{r}})\cong H^{2}(k,\mu_{\ell^{r}})=\br(k)[\ell^{r}]=0$. Hence, \eqref{4.5} yields an isomorphism $\partial\colon H^{1}(G,\mu_{\ell^{r}})\overset{\cong}\to \bbZ/\ell^{r}$. The latter shows that the groups $H^{1}(G,\mu_{\ell^{r}})$ are finite for all $r$ and so we obtain $H^{2}_{cont}(G,\bbZ_{\ell}(1))=\lim H^{2}(G,\mu_{\ell^{r}})$ (see \cite[(2.1)]{Jan}). By \cite[Proposition 1.4.5]{cts}, we have that $cd_{\ell}(K)\leq 1$ and thus, $H^{2}_{cont}(G,\bbZ_{\ell}(1))=0$.\par From the above calculations, we find that $E^{p,1}_{\infty}=E^{p,1}_{2}=0$ and $E^{2,0}_{\infty}=E^{2,0}_{2}=0$. In addition, we note that $E^{0,2}_{\infty}=E^{0,2}_{2}$. To see this, we need to show that the term $E^{3,0}_{\infty}=E^{3,0}_{2}$ also vanishes. But this follows again from \cite[(2.1)]{Jan}, using that $cd_{\ell}(K)\leq 1$. Consequently, \eqref{4.3} yields an isomorphism \[H^{2}_{cont}(\sX_{\eta},\bbZ_{\ell}(1))\cong H^{2}(X,\bbZ_{\ell}(1))^{G}.\tag{4.6}\label{4.6}\]\par We consider the following short exact sequence induced from the Kummer exact sequence\[0\to\Pic(X)\otimes\bbZ/\ell^{r}\to H^{2}(X,\mu_{\ell^{r}})\to\br(X)[\ell^{r}]\to 0\tag{4.7}\label{4.7}.\] Taking the inverse limit of \eqref{4.7} over $r$, we obtain the short exact sequence \[0\to\Pic(X)\otimes\bbZ_{\ell}\overset{c_{\ell,1}}\to H^{2}(X,\bbZ_{\ell}(1))\to T_{\ell}\br(X)\to0\tag{4.8}\label{4.8}.\] The last term of \eqref{4.8} is a free $\bbZ_{\ell}$-module of finite rank. Tensoring \eqref{4.8} with $\bbQ_{\ell},$ we see that the rank of $T_{\ell}\br(X)$ is $b_{2}-\rho$ and so vanishes. As clearly $\sX^{\sm}_{0}(k)\neq\emptyset$, the henselian property of $R$ implies that the $R$-scheme $\sX\to\Spec R$ admits a section (see \cite[Theorem I.4.2]{mil}). From this follows that the relative Picard functor $\Pic_{\sX_{\eta}/K}$ is representable by a $K$-scheme (see \cite[Theorem 2.5]{kl} and \cite[Theorem 4.8]{kl}) and hence, $\Pic(X)^{G}={\Pic_{\sX_{\eta}/K}(K^{sep})}^{G}=\Pic_{\sX_{\eta}/K}(K)=\Pic(\sX_{\eta})$. Passing to $G$-invariants the $\ell$-adic cycle class map induces by \eqref{4.6} an isomorphism $c_{\ell,1}\colon\Pic(\sX_{\eta})\otimes\bbZ_{\ell}\to H_{cont}^{2}(\sX_{\eta},\bbZ_{\ell}(1))$.\par The Gysin sequence for Jannsen's continuous \'etale cohomology \cite[\S 6]{stefanos}, yields an exact sequence\[\oplus^{n}_{i=1}\bbZ_{\ell}[\sX_{0,i}]\to H^{2}(\sX,\bbZ_{\ell}(1))\to H_{cont}^{2}(\sX_{\eta},\bbZ_{\ell}(1)).\tag{4.9}\label{4.9}\] Since we know now that $H_{cont}^{2}(\sX_{\eta},\bbZ_{\ell}(1))$ is algebraic, taking closures of divisors on $\sX_{\eta}$ in $\sX$ shows via \eqref{4.9} that $H^{2}(\sX,\bbZ_{\ell}(1))$ is algebraic, too. By \eqref{4.2} and the functoriality with respect to pullbucks of the $\ell$-adic cycle class map, we obtain similarly that $c_{\ell,1}\colon\Pic(\sX_{0})\otimes\bbZ_{\ell}\to H^{2}(\sX_{0},\bbZ_{\ell}(1))$ is surjective. In particular, \eqref{2.6} implies \[\coker(H^{2}(\sX,\bbZ_{\ell}(1))\to H^{2}(\sX,\mu_{\ell^{r}}))\cong \br(\sX)[\ell^{r}]\]\[\coker(H^{2}(\sX_{0},\bbZ_{\ell}(1))\to H^{2}(\sX_{0},\mu_{\ell^{r}}))\cong \br(\sX_{0})[\ell^{r}].\] The natural isomorphism $\br(\sX)[\ell^{r}]\cong\br(\sX_{0})[\ell^{r}]$ then follows from \eqref{4.1} and \eqref{4.2}.\end{proof}\begin{remark}\label{rem:4.2} Regarding the characteristic zero case, the conditions in the above Lemma are satisfied, when $H^{i}(X,\sO_{X})=0$ for $i=1,2$.\end{remark} In general, $\rho\leq b_{2}$ and one can study the restriction map $\br(\sX)\to\br(\sX_{0})_{\tor}$ via the following diagram with exact rows:
\begin{center}\begin{tikzcd}
(4.10) & 0 \arrow[r] & \Pic(\sX)\otimes \bbZ/\ell^{r} \arrow[r, "c_{1}"] \arrow[d,  hook] & {H^{2}(\sX,\mu_{\ell^{r}})} \arrow[r] \arrow[d, "\cong"] & {\br(\sX)[\ell^{r}]} \arrow[r] \arrow[d,  two heads] & 0 \\
       & 0 \arrow[r] & \Pic(\sX_{0})\otimes \bbZ/\ell^{r} \arrow[r, "c_{1}"]                      & {H^{2}(\sX_{0},\mu_{\ell^{r}})} \arrow[r]                & {\br(\sX_{0})[\ell^{r}]} \arrow[r]                           & 0,
\end{tikzcd}
\end{center}
where $\ell$ is any prime invertible in $R$ and the middle restriction map is isomorphism by the proper base change theorem (see \cite[Corollary VI.2.7]{mil}). Applying the snake Lemma to the above diagram, we deduce the isomorphism\[\ker(\br(\sX)\to\br(\sX_{0}))[\ell^{r}]\cong \coker(\Pic(\sX)\to\Pic(\sX_{0}))\otimes\bbZ/\ell^{r}.\tag{4.11}\label{4.13}\]Passing to the colimit over $r$, we find that the map $\br(\sX)\{\ell\}\to\br(\sX_{0})\{\ell\}$ is isomorphism if and only if \[\frac{\Pic(\sX_{0})}{\Pic(\sX)}\otimes\bbQ_{\ell}/\bbZ_{\ell}=0.\tag{4.12}\label{4.14}\]Elementary deformation theory thus gives the following result.\begin{lemma}\label{Lem:4.3}Let $p\colon\sX\to\Spec R$ be a strictly semi-stable scheme over a complete discrete valuation ring $R$. If $H^{2}(\sX_{0},\sO_{\sX_{0}})=0$, then for any prime $\ell$ invertible in $R$, the map $\br(\sX)\{\ell\}\to\br(\sX_{0})\{\ell\}$ is an isomorphism.\end{lemma}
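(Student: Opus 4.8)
The plan is to show directly that the restriction map $\Pic(\sX)\to\Pic(\sX_{0})$ is surjective; by the criterion \eqref{4.14} established just above, this forces $\br(\sX)\{\ell\}\to\br(\sX_{0})\{\ell\}$ to be an isomorphism, since surjectivity makes the cokernel $\Pic(\sX_{0})/\Pic(\sX)$ vanish and \eqref{4.14} holds trivially. Because $R$ is complete and $\sX\to\Spec R$ is proper (indeed projective and flat, being strictly semi-stable), I would pass to the formal completion $\hat{\sX}$ of $\sX$ along the special fibre $\sX_{0}$ and exploit that line bundles can first be built formally and then algebraized.

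First I would invoke Grothendieck's existence theorem (formal GAGA for the proper scheme $\sX$ over the complete Noetherian local ring $R$), which gives an equivalence between coherent sheaves on $\sX$ and coherent sheaves on $\hat{\sX}$, and in particular an isomorphism $\Pic(\sX)\cong\Pic(\hat{\sX})$. It therefore suffices to lift a given class $L_{0}\in\Pic(\sX_{0})$ to a compatible system of line bundles on the infinitesimal thickenings $\sX_{n}\coloneqq\sX\times_{R}R/\mathfrak{m}^{n+1}$, because such a compatible system is precisely a line bundle on $\hat{\sX}$ restricting to $L_{0}$.

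Next I would run the standard deformation-theoretic induction. For each $n$ the closed immersion $\sX_{n}\hookrightarrow\sX_{n+1}$ is a square-zero thickening with ideal sheaf $\sI\cong\sO_{\sX_{0}}\otimes_{k}(\mathfrak{m}^{n+1}/\mathfrak{m}^{n+2})$, and the truncated exponential sequence $1\to\sI\to\sO^{*}_{\sX_{n+1}}\to\sO^{*}_{\sX_{n}}\to 1$ identifies the obstruction to lifting $L_{n}$ across this thickening with a class in $H^{2}(\sX_{0},\sI)\cong H^{2}(\sX_{0},\sO_{\sX_{0}})\otimes_{k}(\mathfrak{m}^{n+1}/\mathfrak{m}^{n+2})$. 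The hypothesis $H^{2}(\sX_{0},\sO_{\sX_{0}})=0$ makes every such obstruction vanish, so each map $\Pic(\sX_{n+1})\to\Pic(\sX_{n})$ is surjective and $L_{0}$ lifts all the way up the tower. This yields the desired formal line bundle on $\hat{\sX}$, hence by the previous step an honest element of $\Pic(\sX)$ restricting to $L_{0}$, proving surjectivity.

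The only points requiring care are the verification of the hypotheses of Grothendieck existence — which hold because strict semi-stability gives a projective, flat, and thus proper morphism to $\Spec R$ with $R$ complete — and the identification of the deformation obstruction group; both are routine. The main conceptual step, and the place where completeness of $R$ is genuinely used, is the algebraization of the formal line bundle: without it one would only control $\Pic(\hat{\sX})$ rather than $\Pic(\sX)$ itself. Once surjectivity of $\Pic(\sX)\to\Pic(\sX_{0})$ is in hand, the conclusion is immediate from \eqref{4.14}.
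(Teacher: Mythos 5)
Your proposal is correct and follows essentially the same route as the paper: the paper's own proof is a three-line argument observing that the obstruction to extending a line bundle from $\sX_{0}$ to $\sX$ lies in $H^{2}(\sX_{0},\sO_{\sX_{0}})=0$, so $\Pic(\sX)\to\Pic(\sX_{0})$ is surjective and the conclusion follows from \eqref{4.14}. You have simply spelled out the details the paper leaves implicit (the inductive lifting over infinitesimal thickenings and the algebraization via Grothendieck's existence theorem, which is where completeness of $R$ enters), and these details are accurate.
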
\begin{proof}The obstruction to extending a line bundle lies in $H^{2}(\sX_{0},\sO_{\sX_{0}})$. Since this is zero, we may lift any line bundle $\sL_{0}$ on $\sX_{0}$ to the total space $\sX$. Thus, the result follows immediately from \eqref{4.14}.\end{proof}\begin{lemma}\label{lem:4.4} Let $k$ be a field and set $p=\Char(k)$ if $\Char(k)>0$ and $p=1,$ otherwise. Let $\sZ\to B$ be a smooth proper family of $k$-varieties with geometrically irreducible fibres. Then for $t\in B$ and a prime $\ell\neq p,$ we have a specialization map on Brauer groups $sp_{\bar{\eta},\bar{t}}\colon \br(\sZ_{\bar{\eta}})\{\ell\}\to \br(\sZ_{\bar{t}})\{\ell\},$ which is functorial with respect to pullbacks. In addition, this map is always surjective and is an isomorphism if and only if $\rho(\sZ_{\bar{\eta}})=\rho(\sZ_{\bar{t}})$.\end{lemma}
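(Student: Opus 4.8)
The plan is to build $sp_{\bar\eta,\bar t}$ out of the smooth and proper base change theorem together with the Kummer sequence, and then to extract every asserted property from a single snake-lemma diagram, in the spirit of the passage leading to \eqref{4.13}--\eqref{4.14}. First I would reduce to a strictly henselian trait. Since $\bar t$ is a specialization of $\bar\eta$, I may replace $B$ by the spectrum of a strictly henselian discrete valuation ring $R$ with separably closed residue field, whose closed point lies over $t$ and whose geometric generic point is $\bar\eta$; this changes neither geometric fibre $\sZ_{\bar\eta}$ nor $\sZ_{\bar t}$. Writing $f\colon\sZ\to\Spec R$ for the resulting smooth proper family, the sheaves $R^{i}f_{*}\mu_{\ell^{r}}$ are locally constant constructible by smooth and proper base change, hence constant on the strictly henselian base. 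Consequently both restriction maps $H^{i}(\sZ,\mu_{\ell^{r}})\to H^{i}(\sZ_{\bar t},\mu_{\ell^{r}})$ and $H^{i}(\sZ,\mu_{\ell^{r}})\to H^{i}(\sZ_{\bar\eta},\mu_{\ell^{r}})$ are isomorphisms -- the first by proper base change over the strictly henselian base, the second as the stalk-specialization of a locally constant sheaf -- and composing produces a canonical isomorphism $H^{i}(\sZ_{\bar\eta},\mu_{\ell^{r}})\xrightarrow{\sim}H^{i}(\sZ_{\bar t},\mu_{\ell^{r}})$, compatible with cup products and with the Kummer boundary maps. These isomorphisms are plainly functorial with respect to pullbacks, which will give the asserted functoriality of $sp_{\bar\eta,\bar t}$.

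Next I would produce the specialization map on N\'eron--Severi groups. As $\Pic^{0}$ of a proper variety over an algebraically closed field is divisible, one has $\Pic(-)\otimes\bbZ/\ell^{r}=\NS(-)\otimes\bbZ/\ell^{r}$, so the Kummer sequence \eqref{4.7} reads $0\to\NS(X)\otimes\bbZ/\ell^{r}\to H^{2}(X,\mu_{\ell^{r}})\to\br(X)[\ell^{r}]\to0$. The key point -- and the step I expect to be the main obstacle -- is to check that, under the identification of $H^{2}$'s above, the image of $\NS(\sZ_{\bar\eta})$ lands inside the image of $\NS(\sZ_{\bar t})$. I would verify this by specializing divisors: a class in $\NS(\sZ_{\bar\eta})$ is represented by a divisor defined over a finite extension of $R$, whose Zariski closure in the (base-changed) total space is a divisor restricting to a divisor on the special fibre $\sZ_{\bar t}$; the compatibility of the cycle class map with the proper base change isomorphism then shows the two resulting $H^{2}$-classes agree. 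This yields an injective specialization homomorphism $sp_{\NS}\colon\NS(\sZ_{\bar\eta})\to\NS(\sZ_{\bar t})$, compatible with cycle classes, whose existence is exactly what makes the induced map on Brauer groups well defined.

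Finally I would assemble the diagram with exact rows the two Kummer sequences, and vertical maps $sp_{\NS}\otimes\bbZ/\ell^{r}$, the $H^{2}$-isomorphism, and the induced map $sp_{\bar\eta,\bar t}$ on $\br[\ell^{r}]$. Applying the snake lemma exactly as in \eqref{4.13}, the vanishing of the kernel and cokernel of the middle vertical forces $sp_{\bar\eta,\bar t}$ to be surjective and identifies $\ker(sp_{\bar\eta,\bar t})[\ell^{r}]\cong\coker(sp_{\NS})\otimes\bbZ/\ell^{r}$ by right-exactness of the tensor product. Passing to the colimit over $r$, that is to the $\{\ell\}$-parts, gives $\ker(sp_{\bar\eta,\bar t})\cong\coker(sp_{\NS})\otimes\bbQ_{\ell}/\bbZ_{\ell}$. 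Since $\coker(sp_{\NS})$ is a finitely generated abelian group of rank $\rho(\sZ_{\bar t})-\rho(\sZ_{\bar\eta})$, and its finite torsion is annihilated upon tensoring with the $\ell$-divisible group $\bbQ_{\ell}/\bbZ_{\ell}$, this cokernel equals $(\bbQ_{\ell}/\bbZ_{\ell})^{\,\rho(\sZ_{\bar t})-\rho(\sZ_{\bar\eta})}$. Hence, as in \eqref{4.14}, the map $sp_{\bar\eta,\bar t}$ is an isomorphism precisely when $\rho(\sZ_{\bar\eta})=\rho(\sZ_{\bar t})$, which completes the argument.
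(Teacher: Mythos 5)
Your argument is correct and follows the same skeleton as the paper's proof: both rest on the commutative diagram of Kummer sequences \eqref{4.15} with the smooth--proper base change isomorphism in the middle column, and both extract the conclusions from the snake lemma. The genuine difference is in how the equivalence with the Picard numbers is obtained. The paper proves the ``if'' direction by passing to inverse limits: it upgrades the surjection on $\ell^{r}$-torsion to a surjection $T_{\ell}\br(\sZ_{\bar{\eta}})\to T_{\ell}\br(\sZ_{\bar{t}})$ of free $\bbZ_{\ell}$-modules of the same rank $b_{2}-\rho$, concludes this is an isomorphism, feeds that back through the snake lemma to see that $\NS(\sZ_{\bar{\eta}})\otimes\bbZ_{\ell}\to\NS(\sZ_{\bar{t}})\otimes\bbZ_{\ell}$ is an isomorphism, and only then deduces the statement for $\br\{\ell\}$; the converse is a separate snake-lemma application. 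You instead take the direct limit of the snake-lemma identification $\ker(sp_{\bar{\eta},\bar{t}})[\ell^{r}]\cong\coker(sp_{\NS})\otimes\bbZ/\ell^{r}$ to get $\ker(sp_{\bar{\eta},\bar{t}})\{\ell\}\cong(\bbQ_{\ell}/\bbZ_{\ell})^{\rho(\sZ_{\bar{t}})-\rho(\sZ_{\bar{\eta}})}$, which yields both directions at once and closely parallels the paper's own derivation of \eqref{4.14} from \eqref{4.13}; this is arguably the cleaner route, at the cost of invoking that $\coker(sp_{\NS})$ is finitely generated of rank $\rho(\sZ_{\bar{t}})-\rho(\sZ_{\bar{\eta}})$. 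Two minor remarks: the injectivity of $sp_{\NS}$ you assert follows from the diagram only up to prime-to-$\ell$ torsion, but that suffices for the rank computation, so there is no gap; and the paper simply cites \cite{fw}, \cite{mil} and \cite{gr} for the existence of the specialization maps on $\NS$ and $H^{2}$ and their compatibility with cycle classes, whereas you reconstruct them over a strictly henselian trait --- a standard construction that is fine as sketched.
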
\begin{proof} Fix a point $t\in B$ and a prime $\ell\neq p$. Recall that we have specialization maps $sp_{\bar{\eta},\bar{t}}\colon \NS(\sZ_{\bar{\eta}})\to\NS(\sZ_{\bar{t}})$ (see \cite[\S 20.3]{fw}) and $sp_{\bar{\eta},\bar{t}}\colon H^{2}(\sZ_{\bar{\eta}},\mu_{\ell^{r}})\to H^{2}(\sZ_{\bar{t}},\mu_{\ell^{r}}),$ that are both functorial with respect to pullbacks. The second one is an isomorphism and is induced by the smooth proper base change theorem (see \cite[Corollary VI.4.2]{mil}). As specialization commutes with the cycle class maps (see \cite[App. X]{gr}), \eqref{2.6} yields the homomorphism $sp_{\bar{\eta},\bar{t}}\colon \br(\sZ_{\bar{\eta}})[\ell^{r}]\to \br(\sZ_{\bar{t}})[\ell^{r}]$. The functoriality of this homomorphism with respect to pullbacks is clear.\par We have the following diagram with exact rows\[\begin{tikzcd}
0 \arrow[r] & \NS(\sZ_{\bar{\eta}})\otimes\bbZ/\ell^{r} \arrow[r] \arrow[d, "{sp_{\bar{\eta},\bar{t}}}"] & {H^{2}(\sZ_{\bar{\eta}},\mu_{\ell^{r}})} \arrow[r] \arrow[d, "{sp_{\bar{\eta},\bar{t}}}"] & {\br(\sZ_{\bar{\eta}})[\ell^{r}]} \arrow[r] \arrow[d, dashed, "{sp_{\bar{\eta},\bar{t}}}"] & 0 \\
0 \arrow[r] & \NS(\sZ_{\bar{t}})\otimes\bbZ/\ell^{r} \arrow[r]                                           & {H^{2}(\sZ_{\bar{t}},\mu_{\ell^{r}})} \arrow[r]                                           & {\br(\sZ_{\bar{t}})[\ell^{r}]} \arrow[r]                                           & 0.\tag{4.13}\label{4.15}\end{tikzcd}\] Since the middle vertical map is an isomorphism, the first vertical map is always injective and the last one is surjective.\par Next, we assume $\rho(\sZ_{\bar{\eta}})=\rho(\sZ_{\bar{t}})$ and want to show that $sp_{\bar{\eta},\bar{t}}\colon \br(\sZ_{\bar{\eta}})\{\ell\}\to \br(\sZ_{\bar{t}})\{\ell\}$ is an isomorphism. All the terms in \eqref{4.15} are finite, and so taking inverse limits over $r$ is exact. Thus, the last vertical map in \eqref{4.15} induces a surjection between the associated $\ell$-adic Tate modules. As both $T_{\ell}\br(\sZ_{\bar{\eta}})$ and $T_{\ell}\br(\sZ_{\bar{t}})$ are free $\bbZ_{\ell}$-modules of the same rank $b_{2}-\rho$, the map $T_{\ell}\br(\sZ_{\bar{\eta}})\to T_{\ell}\br(\sZ_{\bar{t}})$ is in fact an isomorphism. Applying the snake Lemma to the inverse limit of \eqref{4.15}, the latter also yields that the specialization map $sp_{\bar{\eta},\bar{t}}\colon \NS(\sZ_{\bar{\eta}})\otimes\bbZ_{\ell}\to\NS(\sZ_{\bar{t}})\otimes\bbZ_{\ell}$ is an isomorphism for every $\ell\neq p$. The faithfully flat property of $\bigsqcup_{\ell\neq p}\Spec\bbZ_{\ell}\to\Spec\bbZ[1/p],$ implies $sp_{\bar{\eta},\bar{t}}\colon \NS(\sZ_{\bar{\eta}})\otimes\bbZ[1/p]\to\NS(\sZ_{\bar{t}})\otimes\bbZ[1/p]$ is an isomorphism. As a consequence, we also obtain $sp_{\bar{\eta},\bar{t}}\colon\br(\sZ_{\bar{\eta}})\{\ell\}\to\br(\sZ_{\bar{t}})\{\ell\}$ is an isomorphism for all $\ell\neq p$, since this holds for the first two vertical maps in \eqref{4.15}.\par The converse is true as well. Namely, if $sp_{\bar{\eta},\bar{t}}\colon\br(\sZ_{\bar{\eta}})\{\ell\}\to\br(\sZ_{\bar{t}})\{\ell\}$ is an isomorphism, then applying the snake Lemma to \eqref{4.15}, one gets $\rho(\sZ_{\bar{\eta}})=\rho(\sZ_{\bar{t}})$.\end{proof}
\begin{proposition}\label{prp:4.4} Let $k$ be a field and set $p=\Char(k)$ if $\Char(k)>0$ and $p=1,$ otherwise. Let $f\colon\sX\to B$ and $g\colon\sY\to B$ be smooth proper morphisms of $k$-varieties with geometrically irreducible fibres. Let $q\colon\sY\to\sX$ be a morphism of $B$-schemes. Assume that $\rho(\sX_{\bar{\eta}})=b_{2},$ where $b_{2}$ is the second $\ell$-adic betti number of $\sX_{\bar{\eta}}$ for some prime $\ell\neq p$. For $t\in B$, consider the pull-back map $q_{\bar{t}}^{*}\colon\br(\sX_{\bar{t}})\to\br(\sY_{\bar{t}})$. Then for every prime $\ell\neq p$ and $t\in B$, such that $\rho(\sY_{\bar{t}})=\rho(\sY_{\bar{\eta}}),$ we have $\ker(q_{\bar{t}}^{*})\{\ell\}\cong \ker (q_{\bar{\eta}}^{*})\{\ell\}$.\end{proposition}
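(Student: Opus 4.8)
The plan is to deduce everything from the specialization machinery of Lemma \ref{lem:4.4}, applied separately to the two families $f\colon\sX\to B$ and $g\colon\sY\to B$. Fix a prime $\ell\neq p$ and a point $t\in B$. Lemma \ref{lem:4.4} furnishes specialization homomorphisms $sp^{\sX}_{\bar\eta,\bar t}\colon\br(\sX_{\bar\eta})\{\ell\}\to\br(\sX_{\bar t})\{\ell\}$ and $sp^{\sY}_{\bar\eta,\bar t}\colon\br(\sY_{\bar\eta})\{\ell\}\to\br(\sY_{\bar t})\{\ell\}$, both functorial with respect to pullbacks. Since $q\colon\sY\to\sX$ is a morphism of $B$-schemes, applying this functoriality to $q$ yields a commutative square
\[\begin{tikzcd}
\br(\sX_{\bar\eta})\{\ell\} \arrow[r, "q_{\bar\eta}^{*}"] \arrow[d, "{sp^{\sX}_{\bar\eta,\bar t}}"'] & \br(\sY_{\bar\eta})\{\ell\} \arrow[d, "{sp^{\sY}_{\bar\eta,\bar t}}"] \\
\br(\sX_{\bar t})\{\ell\} \arrow[r, "q_{\bar t}^{*}"] & \br(\sY_{\bar t})\{\ell\}.
\end{tikzcd}\]
The proposition then reduces to the formal observation that if both vertical maps in such a square are isomorphisms, then the left one restricts to an isomorphism $\ker(q_{\bar\eta}^{*})\{\ell\}\xrightarrow{\cong}\ker(q_{\bar t}^{*})\{\ell\}$, which is an elementary diagram chase.

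Thus the content lies in showing that both vertical arrows are isomorphisms. For the right-hand map this is immediate from Lemma \ref{lem:4.4}, since by hypothesis $\rho(\sY_{\bar t})=\rho(\sY_{\bar\eta})$. The point needing an argument is $sp^{\sX}_{\bar\eta,\bar t}$, where I would leverage the assumption $\rho(\sX_{\bar\eta})=b_{2}$ to prove $\rho(\sX_{\bar t})=\rho(\sX_{\bar\eta})$, so that Lemma \ref{lem:4.4} applies again. The cleanest route uses that $\rho$ never exceeds $b_{2}$, that $b_{2}$ is constant in the smooth proper family $\sX\to B$ by smooth proper base change (so $b_{2}(\sX_{\bar t})=b_{2}(\sX_{\bar\eta})=b_{2}$), and that the N\'eron--Severi specialization map is injective (giving $\rho(\sX_{\bar\eta})\leq\rho(\sX_{\bar t})$). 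Combining,
\[\rho(\sX_{\bar\eta})\leq\rho(\sX_{\bar t})\leq b_{2}(\sX_{\bar t})=b_{2}=\rho(\sX_{\bar\eta}),\]
so equality holds throughout and $sp^{\sX}_{\bar\eta,\bar t}$ is an isomorphism. Alternatively, staying entirely within Lemma \ref{lem:4.4}: the hypothesis $\rho(\sX_{\bar\eta})=b_{2}$ forces $T_{\ell}\br(\sX_{\bar\eta})=0$, so $\br(\sX_{\bar\eta})\{\ell\}$ is finite; since $sp^{\sX}_{\bar\eta,\bar t}$ is always surjective, its target $\br(\sX_{\bar t})\{\ell\}$ is finite as well, forcing its corank $b_{2}-\rho(\sX_{\bar t})$ to vanish, i.e. $\rho(\sX_{\bar t})=b_{2}$.

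I do not anticipate a genuine obstacle, as the result is formal once the commutative square is established. The only place demanding care is the propagation of the generic equality $\rho(\sX_{\bar\eta})=b_{2}$ to the special fibre: Picard number is merely upper semicontinuous under specialization, and it is precisely the \emph{maximality} $\rho=b_{2}$ at the generic point that pins down the special value. I would also verify at the outset that the standing hypotheses (smooth proper morphisms with geometrically irreducible fibres) make Lemma \ref{lem:4.4} applicable to both $f$ and $g$, and that its functoriality statement is exactly what renders the square commutative for the given $B$-morphism $q$.
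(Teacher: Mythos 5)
Your proposal is correct and follows essentially the same route as the paper: the sandwich $\rho(\sX_{\bar{\eta}})\leq\rho(\sX_{\bar{t}})\leq b_{2}$ forces constancy of the Picard number of $\sX$, Lemma \ref{lem:4.4} then makes both specialization maps on Brauer groups isomorphisms, and the compatibility $sp_{\bar{\eta},\bar{t}}\circ q^{*}_{\bar{\eta}}=q_{\bar{t}}^{*}\circ sp_{\bar{\eta},\bar{t}}$ identifies the kernels. The alternative finiteness argument via $T_{\ell}\br(\sX_{\bar{\eta}})=0$ is a nice extra but not needed.
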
\begin{proof}We have $\rho(\sX_{\bar{\eta}})\leq \rho(\sX_{\bar{t}})\leq b_{2}$ for all $t\in B$. The inequalities follow from the injectivity of $sp_{\bar{\eta},\bar{\tau}}\colon \NS(\sX_{\bar{\eta}})\otimes\bbZ[1/p]\to\NS(\sX_{\bar{t}})\otimes\bbZ[1/p]$ and of the $\ell$-adic cycle class map $c_{\ell,1}$ for a prime $\ell\neq p$. The assumption $\rho(\sX_{\bar{\eta}})=b_{2}$ implies that the Picard number in the family $\sX\to B$ is constant and equals $b_{2}$. By Lemma \ref{lem:4.4}, we deduce that for $t\in B,$ with $\rho(\sY_{\bar{t}})=\rho(\sY_{\bar{\eta}})$, both $sp_{\bar{\eta},\bar{t}}\colon\br(\sX_{\bar{\eta}})\{\ell\}\to\br(\sX_{\bar{t}})\{\ell\}$ and $sp_{\bar{\eta},\bar{t}}\colon\br(\sY_{\bar{\eta}})\{\ell\}\to\br(\sY_{\bar{t}})\{\ell\}$ are isomorphisms. The result follows as $sp_{\bar{\eta},\bar{t}}\circ q^{*}_{\bar{\eta}}=q_{\bar{t}}^{*}\circ sp_{\bar{\eta},\bar{t}}$.\end{proof}\section{Main Application: Godeaux Surface}\label{sec:5} Let $k$ be an algebraically closed field of characteristic $0$. Any quintic surface $Y,$ which is invariant and fixed point free under the $\bbZ/5$-action \eqref{1.1} can be realised as a hypersurface in $\bbP_{k}^{3}$ defined by an equation of the form\begin{center}$G_{\alpha_{1},\ldots,\alpha_{8}}(T_{0},\ldots,T_{3})= T^{5}_{0}+T^{5}_{1}+T^{5}_{2}+T^{5}_{3}+\alpha_{1}T^{3}_{0}T_{2}T_{3}+\alpha_{2}T_{0}T^{3}_{1}T_{2}+\alpha_{3}T_{1}T^{3}_{2}T_{3}+\alpha_{4}T_{0}T_{1}T^{3}_{3}+\alpha_{5}T_{0}T^{2}_{2}T^{2}_{3}+\alpha_{6}T^{2}_{0}T_{1}T^{2}_{2}+\alpha_{7}T^{2}_{1}T_{2}T_{3}^{2}+\alpha_{8}T^{2}_{0}T^{2}_{1}T_{3},$\end{center}where $\alpha\coloneqq(\alpha_{1},\alpha_{2},\ldots,\alpha_{8})\in\bbA_{k}^{8}(k)$. Given a point $\alpha\in\bbA_{k}^{8}(k)$, we set $Y_{\alpha}\coloneqq\bbV(G_{\alpha})\subset\bbP^{3}_{k}$.\begin{proposition}[\cite{persson}]\label{prp:5.1}Let $k$ be an algebraically closed field of characteristic $0$. There exists a triple-point free semi-stable degeneration $\sX\to\Spec k[[t]]$ of a Godeaux surface, such that the dual graph $\Gamma$ of the special fibre $\sX_{0}$ is a chain. If we number the components $\sX_{0,i}$ of $\sX_{0}$, so that $\sX_{0,i}\cap\sX_{0,i+1}\neq\emptyset$ for every $i$, then all $\sX_{0,i}$ are elliptic ruled surfaces except the last one $\sX_{0,n}$, which is a cubic surface. The components $\sX_{0,i}$ and $\sX_{0,i+1}$ are glued along smooth elliptic curves $C_{i,i+1}$. In addition, if $C_{\sX_{0,i}}\coloneqq\sX_{0,i}\cap\sX^{\sing}_{0},$ then $C_{\sX_{0,1}}$ sits as a $5$-fold cover onto the base of the ruled surface $\sX_{0,1}$, $C_{\sX_{0,i}}$ consists of two disjoint sections of the ruled surface $\sX_{0,i}$ for $i=2,\ldots,n-1$ and $C_{\sX_{0,n}}+K_{\sX_{0,n}}=0\in\Pic(\sX_{0,n})$.\end{proposition}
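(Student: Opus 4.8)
The plan is to realize the degeneration as the quotient of an explicit $\bbZ/5$-equivariant semi-stable degeneration of the universal cover, following Persson \cite{persson}. First I would invoke the dimension count for $|\sO_{\bbP^3_k}(5)|^{\bbZ/5}$ together with \cite[Appendix 5, Lemma 1]{persson} to fix a $\bbZ/5$-invariant quintic $Y_0$ whose only singularities are $5$ ordinary triple points forming a single free orbit under \eqref{1.1}, none of which is a coordinate fixed point. At each such point the tangent cone is a smooth plane cubic, so the singularity is simple-elliptic of type $\tilde{E}_6$. I would place $Y_0$ as the central fibre of a $\bbZ/5$-invariant one-parameter family $\sY \to \Spec k[[t]]$ on which the action \eqref{1.1} extends fibrewise and remains fixed-point free; the quotient $\sX \coloneqq \sY/(\bbZ/5)$ is then a degeneration of a Godeaux surface whose central fibre carries a single $\tilde{E}_6$-point with tangent cone a smooth plane cubic $E$.

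The geometric input I am confident about is the local contribution of the triple point, which I would analyze $\bbZ/5$-equivariantly. Near a triple point the degeneration is modelled on $t = f_3(x,y,z)$, with $f_3$ defining $E$; the total space is smooth and the central fibre is a cone over $E$. Performing the order-$3$ base change $t = s^3$ and normalizing replaces the model by the cone over the smooth cubic surface $S = \{s^3 + f_3 = 0\} \subset \bbP^3$, and blowing up the vertex produces $S$ as an exceptional component meeting the strict transform along the hyperplane section $E = S \cap \{s = 0\}$. Since $E$ is anticanonical on the cubic surface, this realizes the terminal component $\sX_{0,n} = S$ with $C_{\sX_{0,n}} + K_{\sX_{0,n}} = 0$, and shows the adjacent double curve is a smooth elliptic curve.

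The substance of the construction is the global spreading of the remaining body of the surface into the chain of elliptic ruled surfaces, which is Persson's explicit equivariant degeneration. I would identify the components $\sX_{0,1}, \ldots, \sX_{0,n-1}$ as $\bbP^1$-bundles over elliptic curves, glued to their neighbours along smooth elliptic curves, and verify from the construction that for $2 \le i \le n-1$ the double locus $C_{\sX_{0,i}}$ is a pair of disjoint sections of the ruling, while the dual graph is a chain and no three components meet (triple-point freeness). The end component $\sX_{0,1}$ requires separate attention: upstairs the five triple points form one orbit, so the free $\bbZ/5$-action permutes the local data, and on the quotient the boundary curve $C_{\sX_{0,1}}$ descends to a connected \'etale $5:1$ cover of the base elliptic curve of $\sX_{0,1}$. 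This monodromy is exactly what makes the induced cover $q_0$ split over each double curve, matching the hypothesis of Theorem \ref{thm:3.1}. Finally I would confirm that the generic fibre is a smooth Godeaux surface and that $\sX$ is regular with reduced simple-normal-crossing central fibre, so that the family is triple-point-free strictly semi-stable.

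The main obstacle is this global equivariant semi-stable reduction: producing the full chain, checking that every intermediate exceptional divisor is genuinely an elliptic ruled surface with exactly the stated pair of disjoint double sections (rather than a more degenerate or higher-genus configuration), and keeping the whole construction compatible with a free $\bbZ/5$-action so that the quotient inherits an honest triple-point-free strictly semi-stable model. This is precisely the content of Persson's computation in \cite[Appendix 5]{persson}, whose explicit resolution I would follow.
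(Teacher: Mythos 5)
Your proposal follows the same route as the paper: both are a guided walk through Persson's construction, starting from a $\bbZ/5$-invariant quintic $Y_{0}$ with five triple points in a single free orbit, placing it in a pencil whose total space is smooth along $Y_{0}$, and performing a local semi-stable reduction at the triple points that inserts a cubic surface $\{s^{3}+f_{3}=0\}$ glued along its anticanonical hyperplane section. Your base-change-and-normalize description of that local step is an equivalent packaging of the paper's blow-up of the triple points followed by the triple cyclic cover branched along $\tilde{Y}_{0}$, so that part is fine. Where your picture diverges from the construction is in the ``chain of elliptic ruled surfaces'' that you single out as the main obstacle. In Persson's degeneration the chain has length two: the component $\sX_{0,1}$ is not produced by any global spreading or further reduction of the total space; it is simply the $\bbZ/5$-quotient of the minimal resolution $\tilde{Y}_{0}$ of the singular quintic, and the load-bearing input --- which your write-up never states --- is Persson's lemma that $\tilde{Y}_{0}$ is itself an elliptic ruled surface in which the five exceptional elliptic curves $E_{i}$ sit as disjoint sections. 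That is exactly what makes $\sX_{0,1}$ elliptic ruled and what turns the common image of the $E_{i}$ (cyclically permuted by the free action) into a $5$-fold cover of the base of $\sX_{0,1}$; these are the two assertions of the Proposition actually used later (in Propositions \ref{prp:5.2} and Lemma \ref{lem:5.2}), and your argument as written gives no way to verify them. Conversely, the intermediate components $\sX_{0,2},\dots,\sX_{0,n-1}$ whose rulings and double sections you flag as the hard part do not occur in the construction ($n=2$, so those conditions are vacuous). So the citation and the overall strategy are right, but the crucial geometric fact is misplaced: it lives in the resolution of the central fibre itself, not in an equivariant semi-stable reduction producing a long chain.
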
\begin{proof}We briefly sketch Perssons' construction (see \cite[Appendix 3]{persson}). Pick $p\in\bbP^{3}_{k}(k)$ not in any coordinate hyperplane. Then there exists $\alpha_{0}\in\bbA^{8}_{k}(k),$ such that $Y_{0}\coloneqq Y_{\alpha_{0}}$ has $5$ triple points at the $\bbZ/5$-orbit of $p$ and no other singularities (see \cite[Appendix 3, Lemma 1]{persson}). The resolution $\nu\colon\tilde{Y}_{0}\to Y_{0}$ obtained by blowing up the $5$ triple points is an elliptic ruled surface and the curves $E_{i}\coloneqq\nu^{-1}(p_{i})$, where $p_{i}$ is a singular point of $Y_{0},$ are smooth elliptic and constitute sections in $\tilde{Y}_{0}$ (see \cite[Appendix 3, Lemma 2]{persson}). Since we blow up with respect to a $\bbZ/5$-invariant center, the $\bbZ/5$-action on $Y_{0}$ lifts naturally to $\tilde{Y}_{0}$. This action has no fixed points on $\tilde{Y}_{0}$ and permutes the curves $E_{i}$. Hence, passing to the quotient by the $\bbZ/5$-action on $\tilde{Y}_{0},$ we obtain an elliptic ruled surface $\tilde{X}_{0}$ and the curves $E_{i}$ become a $5$-fold cover onto the base of $\tilde{X}_{0}$ (see \cite[Appendix 3, Lemma 2]{persson}).\par Choose $\alpha_{1}\in\bbA^{8}_{k}(k),$ such that $Y_{1}\coloneqq Y_{\alpha_{1}}$ is smooth and $p\notin Y_{1}$. Consider the one-parameter family of $\bbZ/5$-invariant quintics $\sY\coloneqq\bbV(G_{\alpha_{0}}+t(G_{\alpha_{1}}-G_{\alpha_{0}}))\subset\bbP^{3}_{k}\times_{k}\bbA^{1}_{k}$. The condition $p\notin Y_{1}$ implies that $\sY$ is smooth along $Y_{0}$. We perform a base change with respect to the local ring at $0\in\bbA^{1}_{k}(k)$ and note that the total space of $\sY\to B$, $B\coloneqq\Spec\sO_{\bbA^{1}_{k},0},$ remains regular. Let $\tilde{\sY}\to\sY$ be the blow up at the $5$ triple points of $Y_{0}$. Then the special fibre of the degeneration $\tilde{\sY}\to B$ is given by $\tilde{Y}_{0}$ and along each elliptic curve $E_{i}$, there is glued a $P_{i}\cong\bbP^{2}$ with multiplicity $3$. Consequently, $\sO_{\tilde{\sY}}(\tilde{Y}_{0})\cong \sO_{\tilde{\sY}}(-\sum_{i}P_{i})^{\otimes 3}$. We take the triple cyclic covering $\sY^{'}\to\tilde{\sY}$ branched along $\tilde{Y}_{0}$. Then $\sY'$ is regular and the composite $\sY^{'}\to B$ factors through $ B\to B, t\mapsto t^{3},$ giving rise to a degeneration $\sY'\to B$ of a $\bbZ/5$-invariant quintic. The special fibre of the resulting degeneration consists of $\tilde{Y}_{0}$ and along each $E_{i}$, there is glued a cubic surface $R_{i}$ with multiplicity $1$. Finally, forming the $\bbZ/5$-quotient, we deduce a semi-stable degeneration $\sX\to B$ of a Godeaux surface. The special fibre of this family consists of two components, the elliptic ruled surface $\tilde{X}_{0}$ and a cubic surface $R$. They intersect along a smooth elliptic curve $C,$ which sits as a $5$-fold section in $\tilde{X}_{0}$ and satisfies $K_{\tilde{X}_{0}}+C=0\in\Pic(\tilde{X}_{0})$.\end{proof} 
\begin{proposition}\label{prp:5.2} Let $k$ be an algebraically closed field of characteristic $0$. Let $\sX\to\Spec k[[t]]$ be a semi-stable degeneration of a Godeaux surface with special fibre as in Proposition \ref{prp:5.1}. Then $\sX\to\Spec k[[t]]$ admits a natural $5:1$ \'etale cover $\sY\to\Spec k[[t]]$, which is a triple-point free semi-stable degeneration of a smooth quintic surface. The dual graph $\Gamma$ of the special fibre $\sY_{0}$ is a flower pot. In particular, $\sY_{0}$ is given by an elliptic ruled surface $Y_{0}$ with five disjoint sections $E_{i},$ such that along each $E_{i},$ there is glued a chain of elliptic ruled surfaces $Y_{i,j}$ with a cubic surface $Y_{i,n}$ as an end-component. These chains are isomorphic to each other and they are permuted by the $\bbZ/5$-action of the covering, whereas $Y_{0}$ is invariant under this action.\end{proposition}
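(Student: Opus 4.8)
The plan is to realise $\sY$ as the degree-$5$ cyclic cover of $\sX$ attached to the Godeaux torsion class and then to read off its special fibre one component at a time. By Perssons' construction (Proposition \ref{prp:5.1}), $\sX$ is the quotient of a strictly semi-stable family of quintics by a fixed-point-free $\bbZ/5$-action; taking $q\colon\sY\to\sX$ to be this quotient gives a finite étale Galois cover of degree $5$. Equivalently, $q$ is the cyclic cover determined by the line bundle $L\in\Pic(\sX)$ restricting on the generic fibre to a generator $\tau$ of $\Pic(\sX_{\eta})_{\tor}\cong\bbZ/5$, together with a trivialisation of $L^{\otimes 5}$. Either way, since $q$ is étale over the triple-point-free strictly semi-stable scheme $\sX$, the total space $\sY$ is again regular, its special fibre $\sY_{0}=q^{-1}(\sX_{0})$ is a triple-point-free simple normal crossing divisor, and $\sY_{\eta}\to\sX_{\eta}$ is the connected cover classified by $\tau$, namely the universal cover of the Godeaux surface and hence a smooth quintic.

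The crux is to decide, for each component $\sX_{0,i}$, whether $q^{-1}(\sX_{0,i})\to\sX_{0,i}$ is connected or splits into five copies; this is governed by the order of $L_{i}\coloneqq L|_{\sX_{0,i}}$ in $\Pic(\sX_{0,i})[5]$. I would compute these orders by descending the chain from the cubic end. On the cubic surface $\sX_{0,n}$ the Picard group is torsion free, so $L_{n}=\sO$. For a middle component $\sX_{0,i}$ with $2\leq i\leq n-1$, an elliptic ruled surface $\pi_{i}\colon\sX_{0,i}\to B_{i}$, every torsion line bundle is a pullback $\pi_{i}^{*}M$; by Proposition \ref{prp:5.1} the curve $\sX_{0,i}\cap\sX_{0,i+1}$ is a single section, hence maps isomorphically onto $B_{i}$, so restriction along it returns $M$. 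The compatibility $L_{i}|_{\sX_{0,i}\cap\sX_{0,i+1}}=L_{i+1}|_{\sX_{0,i}\cap\sX_{0,i+1}}$ then forces $L_{i}=\sO$ as soon as $L_{i+1}=\sO$, and descending inductively yields $L_{i}=\sO$ for all $i\geq 2$.

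The exceptional component is $\sX_{0,1}=\tilde{X}_{0}$, where the double curve $C$ is instead a \emph{five-fold} section, so that $C\to B_{1}$ is a degree-$5$ isogeny of elliptic curves. Here the vanishing $L_{1}|_{C}=\sO$ only places $M_{1}$ (with $L_{1}=\pi_{1}^{*}M_{1}$) in the kernel of the isogeny pullback $\Pic(B_{1})[5]\to\Pic(C)[5]$, a cyclic group of order $5$; since $\sY_{\eta}$ is connected, $L$ is globally nontrivial, so $L_{1}$ must generate this kernel. Thus the cover is connected exactly over $\sX_{0,1}$, producing the elliptic ruled surface $Y_{0}=\sX_{0,1}\times_{B_{1}}C$, and splits into five sheets over every later component. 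As $L_{1}|_{C}=\sO$, the preimage of $C$ in $Y_{0}$ is the trivial torsor $\bigsqcup_{j=1}^{5}E_{j}$, i.e.\ five disjoint sections. Gluing the $j$-th sheet over $\sX_{0,2},\dots,\sX_{0,n}$ to $E_{j}$ now assembles the flower pot: a central $Y_{0}$ carrying five disjoint sections, with a chain of elliptic ruled surfaces terminating in a cubic attached along each $E_{j}$. The deck group $\bbZ/5$ permutes the five sheets, hence the five petals, cyclically while preserving $Y_{0}$, so the petals are mutually isomorphic and $Y_{0}$ is invariant.

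I expect the propagation step to be the main obstacle: one must verify that torsion line bundles on the intermediate elliptic ruled surfaces are genuinely pulled back from the base and that the intervening double curves are honest sections rather than multisections, so that restriction to them detects torsion faithfully. Once the contrast between these single sections and the five-fold section $C$ on $\sX_{0,1}$ is in place, the connectivity pattern---and with it the flower-pot shape together with the $\bbZ/5$-symmetry---follows formally.
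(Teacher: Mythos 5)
Your proposal is correct and follows essentially the same route as the paper: both arguments rest on the facts that torsion line bundles on the intermediate elliptic ruled components are pulled back from the base and hence detected faithfully by the single sections $C_{i,i+1}$, that the cubic end-component has torsion-free Picard group, and that the five-fold section over $\sX_{0,1}$ produces a $\bbZ/5$ kernel, which forces the cover to be connected exactly over $\sX_{0,1}$ and to split into five sheets elsewhere. The only cosmetic difference is the direction of the argument: the paper first computes $\Pic(\sX_{0})_{\tor}\cong\bbZ/5$ via the Mayer--Vietoris sequence and lifts a generator to a torsion line bundle on $\sX$ in order to build the cover, whereas you take the cover as given from Persson's construction and read off the restrictions $L|_{\sX_{0,i}}$ component by component.
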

\begin{proof}The Mayer-Vietoris sequence \eqref{2.4} gives rise to the following exact sequence\[0\to\Pic(\sX_{0})\to\bigoplus_{i=1}^{n}\Pic(\sX_{0,i})\overset{r}\to\bigoplus_{i=1}^{n-1}\Pic(C_{i,i+1})\tag{5.1}\label{5.1}.\]\par We aim to compute the torsion group $\Pic(\sX_{0})_{\tor}$. Since $\sX_{0,n}$ is a rational surface, we have $\Pic(\sX_{0,n})_{\tor}=0$. Thus, the restriction $r_{\tor}$ of $r$ to $\bigoplus_{i=1}^{n}\Pic(\sX_{0,i})_{\tor},$ takes $(\sL_{i})_{i}$ to $(\sL_{1}|_{C_{1,2}}-\sL_{2}|_{C_{1,2}},\ldots,\sL_{n-1}|_{C_{n-1,n}})$. Recall that for $i=2,\ldots,n-1,$ the components $\sX_{0,i}$ are ruled and both curves $C_{i-1,i}$ and $C_{i,i+1}$ constitute sections in $\sX_{0,i}$. Hence, restriction of line bundles yields isomorphisms $\Pic(\sX_{0,i})_{\tor}\cong\Pic(C_{i-1,i})_{\tor}$ (resp. $ \Pic(C_{i,i+1})_{\tor}$). It follows that the kernel of $r_{\tor}$ coincides with the one of $\Pic(\sX_{0,1})_{\tor}\to\Pic(C_{1,2})_{\tor},\sL\mapsto\sL|_{C_{1,2}}$. The last map can be identified with ${(\psi|_{C_{1,2}})}^{*}\colon\Pic(E)_{\tor}\cong\Pic(\sX_{0,1})_{\tor}\to\Pic(C_{1,2})_{\tor},$ where $\psi\colon\sX_{0,1}\to E$ is the fibering of the ruled surface $\sX_{0,1}$. Since $C_{1,2}$ is a $5$-fold cover onto the base curve $E$, the kernel of the pullback map ${(\psi|_{C_{1,2}})}^{*}$ is the Cartier dual of $\bbZ/5$, which is $\mu_{5}\cong\bbZ/5$. Consequently, $\Pic(\sX_{0})_{\tor}\cong\bbZ/5$.\par To conclude, we need to show that a non-trivial $5$-torsion line bundle on $\sX_{0}$ lifts to a $5$-torsion line bundle on the total space $\sX$. But torsion line bundles always extend to torsion ones (see \cite[Proposition 22.5]{hr}). Lastly, the desired form of the special fibre for the associated cover $\sY$ is justified by the fact that the restriction of $0\neq\sL\in\Pic(\sX_{0})_{\tor}\cong\bbZ/5$ to $\cup_{i>1}\sX_{0,i}$ is trivial, whereas $\sL|_{\sX_{0,1}}\neq 0$.\end{proof}\begin{lemma}\label{lem:5.2} Let $k$ be an algebraically closed field of characteristic $0$. Let $\sX\to\Spec k[[t]]$ be a semi-stable degeneration of a Godeaux surface with special fibre $\sX_{0},$ as in Proposition \ref{prp:5.1}. Then $\br(\sX)=0$.\end{lemma}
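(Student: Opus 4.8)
The strategy is to reduce to the special fibre and compute its Brauer group by Mayer--Vietoris. The ring $R=k[[t]]$ is a complete, hence henselian, discrete valuation ring with algebraically closed residue field $k$, and the geometric generic fibre $X=\sX_{\bar\eta}$ is a Godeaux surface, so $H^{i}(X,\sO_{X})=0$ for $i=1,2$. By Remark~\ref{rem:4.2} the hypotheses of Lemma~\ref{Lem:4.1} are met, and since every prime is invertible in $R$ this yields isomorphisms $\br(\sX)\{\ell\}\xrightarrow{\ \sim\ }\br(\sX_{0})\{\ell\}$ for all $\ell$. As $\sX$ is regular its Brauer group is torsion, hence the direct sum of its $\ell$-primary parts; it therefore suffices to prove $\br(\sX_{0})=0$.

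Because the degeneration of Proposition~\ref{prp:5.1} is triple-point free, I would feed the chain $\sX_{0,1},\dots,\sX_{0,n}$, with double curves $C_{i}\coloneqq\sX_{0,i}\cap\sX_{0,i+1}$, into the Mayer--Vietoris sequence~\eqref{2.4}. Every component is either a ruled surface over an elliptic curve or the cubic surface $\sX_{0,n}$; the former is birational to $E\times\bbP^{1}$ and the latter to $\bbP^{2}$, so birational invariance of the Brauer group together with $\br(E\times\bbP^{1})=\br(E)=0$ and $\br(\bbP^{2})=0$ gives $\br(\sX_{0,j})=0$ for every $j$. Hence the map $r_{2}$ in~\eqref{2.4} has trivial target and the sequence collapses to an isomorphism $\br(\sX_{0})\cong\coker(r_{1})$, where $r_{1}\colon\bigoplus_{j}\Pic(\sX_{0,j})\to\bigoplus_{i}\Pic(C_{i})$ sends $(\sL_{j})$ to $(\sL_{i}|_{C_{i}}-\sL_{i+1}|_{C_{i}})_{i}$. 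The whole problem is thus to show that $r_{1}$ is surjective.

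To prove surjectivity I would match the degree and the $\Pic^{0}$ parts of a target separately. For the degrees, the fibre class of an interior ruled component $\sX_{0,i}$ meets each of its two sections $C_{i-1},C_{i}$ once and contributes $(-1,+1)$ to the slots $(i-1,i)$, while a line on $\sX_{0,n}$ meets the anticanonical curve $C_{n-1}$ in degree $1$; the resulting $(n-1)\times(n-1)$ bidiagonal integer matrix has determinant $\pm1$, so the induced map $\bigoplus_{j}\Pic(\sX_{0,j})\to\bbZ^{n-1}$ on degrees is onto. After matching degrees the remaining obstruction lies in $\bigoplus_{i}\Pic^{0}(C_{i})$, and I would hit it using the pullbacks $\pi_{j}^{*}\Pic^{0}(E_{j})$ from the bases of the ruled components, which have degree $0$ on every $C_{i}$ and so leave the degrees untouched. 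Here I sweep down the chain: restriction to a section is an isomorphism on $\Pic^{0}$, so starting at slot $n-1$ I realise any prescribed class there, absorb the spill-over into slot $n-2$, and continue down to slot $1$. The only non-section boundary is $C_{1}\subset\sX_{0,1}$, a degree-$5$ cover of the base $E_{1}$; but $(\pi_{1}|_{C_{1}})^{*}\colon\Pic^{0}(E_{1})\to\Pic^{0}(C_{1})$ is a nonzero homomorphism of elliptic curves, hence surjective, and since $\sX_{0,1}$ meets only $C_{1}$ this closes the induction without disturbing the other slots.

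Combining the two parts shows $r_{1}$ is surjective, whence $\br(\sX_{0})=0$ and finally $\br(\sX)=0$. The crux, and the only genuinely delicate point, is the $\Pic^{0}$-surjectivity of $r_{1}$: the interior ruled surfaces force the two boundary restrictions to be linked, so one must exploit the telescoping structure of the chain together with the surjectivity of pullback along the degree-$5$ end cover $C_{1}\to E_{1}$. The reduction via Lemma~\ref{Lem:4.1} and the vanishing of the component Brauer groups are routine by comparison.
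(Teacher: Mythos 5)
Your proposal is correct and follows essentially the same route as the paper: reduction to $\br(\sX_{0})$ via Lemma~\ref{Lem:4.1}, vanishing of $\br(\sX_{0,i})$ for the ruled and rational components, and surjectivity of $r$ onto $\bigoplus_{i}\Pic(C_{i,i+1})$ using the fibre classes of the interior ruled components, a $(-1)$-curve on the cubic end-component (meeting $C_{n-1,n}=-K_{\sX_{0,n}}$ in degree $1$), and the surjectivity of $\Pic^{0}(E_{1})\to\Pic^{0}(C_{1,2})$ along the $5$-fold cover. Your reorganization into a degree part (unimodular bidiagonal matrix) plus a $\Pic^{0}$ part (downward sweep along the chain) is only a cosmetic variant of the paper's propagation of point classes to the first slot via \eqref{5.3}.
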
\begin{proof} A Godeaux surface $X$ over $\overline{k((t))}$ satisfies $H^{i}(X,\sO_{X})=0$ for $i=1,2$. Hence, we may apply Lemma \ref{4.1} and obtain the isomorphism $\br(\sX)\to\br(\sX_{0})_{\tor}$.\par We compute $\br(\sX_{0})$ with the help of the Mayer-Vietoris exact sequence \eqref{2.4}. As the components of the special fibre $\sX_{0}$ are all ruled or rational and the ground field is algebraically closed, we find $\br(\sX_{0,i})=0$ for all $i$. Thus, \eqref{2.4} gives rise to the exact sequence \[\bigoplus^{n}_{i=1}\Pic(\sX_{0,i})\overset{r}\to\bigoplus^{n-1}_{i=1}\Pic(C_{i,i+1})\to \br(\sX_{0})\to 0,\tag{5.2}\label{5.2}\] and we need to show that $r$ is surjective. Consider the fibrations $p_{i}\colon\sX_{0,i}\to E_{i}$ and let $s_{i,j}\colon E_{i}\to C_{j,j+1}\subset\sX_{0,i}$ be the corresponding sections, where $j=i-1,i$ and $i=2,\ldots,n-1$. For a given point $x\in E_{i}$, we set $F^{i}_{x}\coloneqq p^{-1}_{i}(x)$ for the fibre of $p_{i}$ over $x$ and note that, \[r(0,\ldots,[F^{i}_{x}],\ldots,0)=(0,\ldots,-[s_{i,i-1}(x)],[s_{i,i}(x)],\ldots,0).\tag{5.3}\label{5.3}\] As the double curves $C_{i,i+1}$ are isomorphic to each other, \eqref{5.3} implies that it is enough to show $([x],0,\ldots,0)\in\im(r)$ for all points $x\in C_{1,2}$. In fact, it suffices to find a single point $x\in C_{1,2}$ for which the latter holds. Indeed, this follows from $\Pic(C_{1,2})=\bbZ[x]\oplus\Pic^{0}(C_{1,2})$ and that the composite $\Pic^{0}(E_{1})\subset\Pic^{0}(\sX_{0,1})\to\Pic^{0}(C_{1,2})$ is surjective.\par Since $\sX_{0,n}$ is a cubic surface, we may pick a $(-1)$-rational curve $D\subset\sX_{0,n}$. Recall that $K_{\sX_{0,n}}=- C_{n-1,n}$. Thus, the adjunction formula $(K_{\sX_{0,n}}+D).D=-2$ yields $C_{n-1,n}.D=1$ and so $r(0,\ldots,0,-[D])=(0,\ldots,[x])$ for some point $x\in C_{n-1,n}$. Finally, by \eqref{5.3} we get a point $x\in C_{1,2}$, such that $([x],0,\ldots,0)\in\im(r)$, finishing the proof.\end{proof} We are in the position to prove the main result of this paper.\begin{proof}[Proof of Theorem \ref{thm:1.3}.] Pick a generic pencil $\sY\to\bbP^{1}_{k}\subset|\sO_{\bbP^{3}_{k}}(5)|^{\bbZ/5}$ of $\bbZ/5$-invariant quintics, such that some fibre $\sY_{t_{0}},$ $t_{0}\in\bbP^{1}_{k}(k)$ is not passing through any of the $4$ fixed points of the action \eqref{1.1} and has $5$ triple points as its only singularities. Since the pencil is generic, we may assume that the $5$ triple points are not base points and so the total space $\sY$ is smooth along $\sY_{t_{0}}$. We perform a base change with respect to the completion of the local ring of $\bbP^{1}_{k}$ at $t_{0}$. Applying semi-stable reduction to the family $\sY$ and its $\bbZ/5$-quotient $\sX$, we arrive at the $5:1$ \'etale cover $q\colon\tilde{\sY}\to\tilde{\sX},$ described in Proposition \ref{prp:5.2} (see proof of Proposition \ref{prp:5.1}). We want to show that the pullback map $q^{*}_{\bar{\eta}}$ is injective. If $q^{*}_{\bar{\eta}}=0$, then Theorem \ref{thm:1.4} implies that up to some finite base change of discrete valuation rings followed by a resolution of the total space (see \cite{Har}), the restriction map $\br(\tilde{\sX})\to\br(\sX_{\bar{\eta}})$ is surjective. The latter contradicts Lemma \ref{lem:5.2}.\end{proof}\begin{remark}\label{rem:5.4}In general the universal cover $Y$ of a Godeaux surface $X$ is not a quintic, but rather its canonical model $\nu\colon Y\to Y_{can}$ that is obtained by contracting the $(-2)$-curves (see \cite[Theorem 2]{lw}). In particular, $Y_{can}\subset\bbP^{3}_{\bbC}$ is invariant, fixed point free under the $\bbZ/5$-action \eqref{1.1} and $X_{can}=Y_{can}/(\bbZ/5)$ (see \cite[$\S 2$]{lw}).\par We note that if $\nu\colon Y\to Y_{can}$ is not an isomorphism, then $\rho(Y)\geq 13$. Especially, $\rho(Y)=9$ implies that $Y$ is a quintic surface.\par Indeed, let $n$ denote the number of dinstinct $\bbZ/5$-orbits of rational double points in $Y_{can}$. Pick a representative $p_{i}\in Y_{can}$ for each $\bbZ/5$-orbit and let $n_{i}$ be the number of components of the fibre $E_{i}\coloneqq \nu^{-1}(p_{i})$, where $i=1,\ldots,n$. Then we have the relations \[\rho(Y)=\rho(Y_{can})+\sum^{n}_{i=1}5n_{i}\ \text{and}\ 9=\rho(X)=\rho(X_{can})+\sum^{n}_{i=1}n_{i}.\]Since $\rho(Y_{can})\geq\rho(X_{can}),$ we find $\rho(Y)\geq 9 +\sum^{n}_{i=1}4n_{i}\geq 13$, as claimed.\end{remark}\begin{proof}[Proof of Theorem \ref{thm:1.1}.]The generic Picard number in the family of smooth, $\bbZ/5$-invariant and fixed point free quintics $Y$ over $\bbC$ is $9$ (see \cite[Example 3]{BoM}). By Remark \ref{rem:5.4} we know that if the universal cover of a Godeaux surface has Picard number $9$, then it is a quintic. Hence, Proposition \ref{prp:4.4} shows that it suffices to find a single smooth, $\bbZ/5$-invariant and fixed point free quintic $Y$ with $\rho(Y)=9$, so that if $X\coloneqq Y/(\bbZ/5)$, then $q^{*}_{X}\colon\br(X)\to\br(Y)$ is injective. But this follows immediately from Theorem \ref{thm:1.3}, by considering a generic pencil of $\bbZ/5$-invariant quintics, such that some member has $5$ triple points as its only singularities (see \cite[Appendix 3, Lemma 1]{persson}).\end{proof}\section{Application II: Enriques Surface}\label{sec:6} Let $k$ be an algebraically closed field of characteristic $0$ and let $R\coloneqq k[[t]]$. By \cite[Corollary 6.2]{mor}, up to birational equivalence, there are exactly three types of strictly semi-stable degenerations of Enriques $\sX\to\Spec R$, such that the canonical divisor $K_{\sX}$ is $2$-torsion. We refer to them as $(\text{i}a)$ $\sX_{0}$ is a smooth Enriques' surface, $(\text{ii}a)$ $\sX_{0}$ is an elliptic chain with one rational component and $(\text{iii}a)$ $\sX_{0}$ is rationally polygonal and its dual graph $\Gamma$ is a triangulation of $\bbR\bbP^{2}$ (see \cite[Corollary 6.2]{mor}).\par The corresponding double cover $\sY\to\Spec R$ is a strictly semi-stable degeneration of a $K3$ surface, with $(\text{i}a)$ $\sY_{0}$ is a $K3$ surface, $(\text{ii}a)$ $\sY_{0}$ is an elliptic chain with two rational components and $(\text{iii}a)$ $\sY_{0}$ is rationally polygonal and its dual graph $\Gamma$ is a triangulation of $S^{2}$ (see \cite[Theorem 6.1]{mor}).\par Let $\sM$ denote the coarse moduli space of Enriques surfaces. Beauville showed that Enriques surfaces $X$ with $q^{*}_{X}=0$ form an infinite, countable union of non-empty hypersurfaces in $\sM$ (see \cite[Corollary 6.5]{bea}). These hypersurfaces have been explicitly described in \cite{bea}, using lattice theory (see \cite[Proposition 6.2]{bea}). In what follows, $\sH\subset\sM$ denotes the union of these hypersurfaces. To the author's knowledge the following result is new:\begin{theorem}\label{thm:6.2} Let $R$ be a discrete valuation ring with fraction field $K$ of characteristic $0$ and algebraically closed residue field $k$ of characteristic $\neq2$. Let $\sX\to\Spec R$ be a type $(\text{ii}a)$ degeneration of an Enriques surface. Consider its canonical cover $q\colon\sY\to\sX$. Then $q^{*}_{\bar{\eta}}\colon\br(\sX_{\bar{\eta}})[2]\to\br(\sY_{\bar{\eta}})[2]$ is non-zero.\end{theorem}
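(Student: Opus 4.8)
The plan is to apply the main technical result, Theorem~\ref{thm:1.4}, with $d=2$ in its contrapositive form. First I would verify its hypotheses for the canonical cover $q\colon\sY\to\sX$. It is a finite \'etale Galois covering of degree $2$ (with group $\bbZ/2$) of strictly semi-stable $R$-schemes, and $d=2$ is invertible in $R$ because $\Char(k)\neq 2$. The one hypothesis needing attention is that $q_0$ be trivial over $\sX_0^{\sing}\cap\sX_{0,i}$ for every component. For a type $(\text{ii}a)$ degeneration the geometric special fibre $\sY_0$ is an elliptic chain with two rational ends, and the Enriques involution acts on it freely by reflecting the chain about a central (elliptic ruled) component; since $\sX$ is \emph{strictly} semi-stable, the fold must occur at a component rather than a double curve, so the involution interchanges the double curves of $\sY_0$ in mirror pairs. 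Consequently, over each (smooth elliptic) double curve of $\sX_0$ the \'etale cover $q_0$ is a disjoint union of two copies, as required.

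Now $\sX_{\bar\eta}$ is an Enriques surface over the characteristic-zero field $\bar K$, so $\br(\sX_{\bar\eta})\cong\bbZ/2$ and in particular $\br(\sX_{\bar\eta})[2]=\bbZ/2\neq 0$. Suppose, for contradiction, that $q^*_{\bar\eta}\colon\br(\sX_{\bar\eta})[2]\to\br(\sY_{\bar\eta})[2]$ were the zero map. Then Theorem~\ref{thm:1.4} would furnish, after a finite ramified base change $R\subset\tilde R$ and a resolution, a strictly semi-stable model $\tilde\sX$ for which the restriction $\br(\tilde\sX)[2]\to\br(\sX_{\bar\eta})[2]$ is \emph{surjective}. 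The strategy is to rule this out by showing $\br(\tilde\sX)[2]=0$.

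To compute the Brauer group of the total space I would invoke Lemma~\ref{Lem:4.1}. Since $\sX_{\bar\eta}$ is a characteristic-zero Enriques surface it satisfies $H^i(\sX_{\bar\eta},\sO)=0$ for $i=1,2$ and $b_2=\rho=10$; hence, after replacing $R$ by its completion (which changes neither $\br(\sX_{\bar\eta})$ nor the pull-back map), restriction yields $\br(\tilde\sX)\{2\}\cong\br(\tilde\sX_0)\{2\}$. The crucial structural point is that base change followed by semi-stable reduction preserves the type of the degeneration: the special fibre $\tilde\sX_0$ of any resulting model is again an elliptic chain whose components are rational or elliptic ruled and whose double curves are smooth elliptic curves. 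Each such component has trivial Brauer group, so the Mayer--Vietoris sequence \eqref{2.4} identifies $\br(\tilde\sX_0)$ with the cokernel of $\bigoplus_i\Pic(\tilde\sX_{0,i})\to\bigoplus_i\Pic(C_{i,i+1})$, and this cokernel vanishes by Lemma~\ref{lem:6.3}, exactly as in the Godeaux computation of Lemma~\ref{lem:5.2}. Thus $\br(\tilde\sX)[2]=0$, contradicting the surjectivity onto $\br(\sX_{\bar\eta})[2]=\bbZ/2$. The contradiction forces $q^*_{\bar\eta}$ to be non-zero.

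I expect the main obstacle to be the vanishing $\br(\tilde\sX_0)=0$, i.e.\ the surjectivity of the Mayer--Vietoris map $\bigoplus_i\Pic(\tilde\sX_{0,i})\to\bigoplus_i\Pic(C_{i,i+1})$ for every model in the base-change tower. Unlike the Godeaux case, where a cubic end-component carries a $(-1)$-curve meeting the elliptic double curve transversally, here one end of the chain is elliptic ruled rather than rational, so one must argue that the single rational component together with the ruled geometry of the remaining components still generates all point classes on the double curves, and that $\Pic^0$ of each double curve is reached from the base elliptic curve of the adjacent ruling. A secondary point to pin down is the robustness of this surjectivity --- equivalently the persistence of the type $(\text{ii}a)$ structure --- when base change lengthens the chain by inserting additional elliptic ruled surfaces.
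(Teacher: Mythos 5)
Your proposal is correct and follows essentially the same route as the paper: the paper's proof of Theorem \ref{thm:6.2} is exactly the contrapositive application of Theorem \ref{thm:1.4} combined with Lemma \ref{lem:6.3} (which in turn runs through Lemma \ref{Lem:4.1} and the Mayer--Vietoris computation you describe). Your extra verification that $q_{0}$ is trivial over the double curves --- the chain of $\sY_{0}$ folding over a central component --- is a correct elaboration of what the paper dismisses as ``clear,'' and the robustness issues you flag at the end are precisely the points the paper delegates to Lemma \ref{lem:6.3} and the invariance of the type under base change.
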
\begin{proof} It is clear that a type $(\text{ii}a)$ degeneration of an Enriques surface and its double covering $q\colon\sY\to\sX$ satisfy the conditions in Theorem \ref{thm:1.4}. Hence, it suffices to show that the generator $\alpha\in\br(\sX_{\bar{\eta}})\cong\bbZ/2$ cannot be lifted to $\br(\sX)\{2\}$. This holds true due to Lemma \ref{lem:6.3} (cf. \cite[Remark 7.4]{Stef}).\end{proof}\begin{lemma}\label{lem:6.3} Let $R$ be a discrete valuation ring with fraction field $K$ of characteristic $0$ and algebraically closed residue field $k$ of characteristic $\neq2$. Let $\sX\to\Spec R$ be a type $(\text{ii}a)$ degeneration of an Enriques surface. Then the map $\br(\sX)\to\br(\sX_{\bar{\eta}})$ is trivial.\end{lemma}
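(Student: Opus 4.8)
The plan is to show that the $2$-primary part $\br(\sX)\{2\}$ is a divisible group; since $\br(\sX_{\bar{\eta}})\cong\bbZ/2$ is finite (it is the Brauer group of a smooth Enriques surface over an algebraically closed field of characteristic $\neq2$), the restriction $\br(\sX)\to\br(\sX_{\bar{\eta}})$ factors through the $2$-primary part (the group $\br(\sX)$ is torsion as $\sX$ is regular, and its prime-to-$2$ part dies in $\bbZ/2$ automatically), and a homomorphism from a divisible group to a finite group is trivial. We may first reduce to the case where $R$ is complete: replacing $R$ by $\hat{R}$ leaves the special fibre $\sX_{0}$ and the type $(\text{ii}a)$ hypothesis unchanged, and the restriction factors as $\br(\sX)\to\br(\sX_{\hat{R}})\to\br(\sX_{\hat{R},\bar{\eta}})\cong\br(\sX_{\bar{\eta}})$, the last isomorphism holding because both are the $\bbZ/2$ of an Enriques surface over an algebraically closed field. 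Thus we may assume $R$ complete with algebraically closed, hence separably closed, residue field.

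Next I would identify $\br(\sX)\{2\}$ with the Brauer group of the special fibre. The geometric generic fibre $X\coloneqq\sX_{\bar{\eta}}$ is a smooth Enriques surface, so $H^{i}(X,\sO_{X})=0$ for $i=1,2$; by Remark \ref{rem:4.2} the hypotheses of Lemma \ref{Lem:4.1} are met (concretely $H^{1}(X,\sO_{X})=0$ and $b_{2}=\rho=10$), and $2$ is invertible in $R$ since $\Char k\neq2$. Hence Lemma \ref{Lem:4.1} yields a restriction isomorphism $\br(\sX)\{2\}\xrightarrow{\cong}\br(\sX_{0})\{2\}$, and it remains to prove that $\br(\sX_{0})$ is divisible.

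For this I would run the Mayer--Vietoris sequence \eqref{2.4} of Lemma \ref{lem:2.3}, which applies because a type $(\text{ii}a)$ special fibre is an elliptic chain and hence triple-point free. Each component $\sX_{0,i}$ is rational or elliptic ruled over the algebraically closed field $k$, so $\br(\sX_{0,i})=0$, while the double curves $C_{i,i+1}$ are smooth elliptic curves. Therefore \eqref{2.4} collapses to $\br(\sX_{0})\cong\coker\!\big(r\colon\bigoplus_{i}\Pic(\sX_{0,i})\to\bigoplus_{i}\Pic(C_{i,i+1})\big)$. Writing $\Pic(C_{i,i+1})=\bbZ\oplus\Pic^{0}(C_{i,i+1})$ with $\Pic^{0}(C_{i,i+1})$ divisible (characteristic $0$, $k$ algebraically closed), it suffices to check that the composite of $r$ with the degree map onto $\bigoplus_{i}\bbZ$ is surjective: once this holds, every class in $\bigoplus_{i}\Pic(C_{i,i+1})$ is congruent modulo $\im(r)$ to one in $\bigoplus_{i}\Pic^{0}(C_{i,i+1})$, so $\coker(r)$ is a quotient of the divisible group $\bigoplus_{i}\Pic^{0}(C_{i,i+1})$ and hence divisible. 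The surjectivity onto the degree lattice is obtained exactly as in the proof of Lemma \ref{lem:5.2}: pushing a fibre class $[F^{i}_{x}]$ of a ruled component through $r$ produces, via \eqref{5.3}, the difference of the two sections it meets, which has degree $1$ on the relevant double curve, and the rational end component is handled with a $(-1)$-curve as there.

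I expect the main obstacle to be this last Mayer--Vietoris bookkeeping: one must pin down the component types of the type $(\text{ii}a)$ fibre (to get $\br(\sX_{0,i})=0$) and the way the elliptic double curves sit inside them (to force the degree surjectivity). The conceptual point distinguishing this from Lemma \ref{lem:5.2} is that here $r$ is no longer surjective --- the Godeaux computation killed all of $\Pic^{0}$ and gave $\br(\sX_{0})=0$, whereas for type $(\text{ii}a)$ the map $r$ misses some $\Pic^{0}$ directions, and the content of the lemma is precisely that only those divisible directions survive in the cokernel.
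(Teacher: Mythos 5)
Your overall strategy is sound and close to the paper's: reduce to henselian/complete $R$, use Lemma \ref{Lem:4.1} to identify $\br(\sX)\{2\}$ with $\br(\sX_{0})\{2\}$, and compute $\br(\sX_{0})$ as the cokernel of the Mayer--Vietoris map $r$ (the paper in fact proves the stronger statement that $r$ is surjective, so $\br(\sX_{0})=0$; your weaker goal of divisibility of $\coker(r)$ would also suffice). But there is a genuine gap at the step you dismiss as routine: ``the rational end component is handled with a $(-1)$-curve as there.'' In Lemma \ref{lem:5.2} the end component is a cubic surface, which visibly contains $(-1)$-curves. In a type $(\text{ii}a)$ degeneration the end component is only known to be a \emph{rational} surface carrying a smooth anticanonical elliptic curve, and such a surface may be minimal ($\bbP^{2}$, $\bbF_{0}$, $\bbF_{2}$). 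If it is, your degree-surjectivity fails in general: the fibre classes of the middle ruled components only generate the sum-zero sublattice of $\bigoplus_{i}\bbZ$ via \eqref{5.3}, the first component contributes degree $2$ on $C_{1,2}$ (since $C_{1,2}$ is a bisection of $\sX_{0,1}$), and a minimal end component $\bbF_{0}$ or $\bbF_{2}$ contributes only even degrees on $C_{n-1,n}$, so the image of the degree map could land in the index-two sublattice $\{\sum a_{i}\ \text{even}\}$ and $\coker(r)$ would have a $\bbZ/2$ quotient, destroying divisibility.

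The missing ingredient is precisely the paper's intermediate claim that at least one component of the chain is \emph{not} minimal. This is where the hypothesis that $K_{\sX}$ is $2$-torsion enters: the canonical bundle formula gives $K_{\sX_{0,i}}+C_{\sX_{0,i}}\equiv 0$ on every component, and if all components were minimal one would get $C^{2}_{\sX_{0,i}}=K^{2}_{\sX_{0,i}}=0$ on the ruled pieces while the minimal rational end forces $C^{2}_{n-1,n}=K^{2}_{\sX_{0,n}}=8$ or $9$, a numerical contradiction. Only then does one obtain a $(-1)$-curve $D$ on \emph{some} component with $D.C_{\sX_{0,i}}=1$ by adjunction, which supplies the degree-one class needed to finish (whether for full surjectivity of $r$ as in the paper, or for your degree-surjectivity). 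Without this argument your proof does not close; note also that your closing remark that ``$r$ is no longer surjective'' here is not what happens --- the paper shows $r$ is surjective in this case too, using that $C_{1,2}\to E_{1}$ is an \'etale double cover so that $\Pic^{0}(E_{1})\to\Pic^{0}(C_{1,2})$ is onto.
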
\begin{proof} The proof is more or less the same as in Lemma \ref{lem:5.2}. We may replace $R$ by its henselization $R^{\text{h}}$. We number the components $\sX_{0,i},$ such that $C_{i,i+1}\coloneqq\sX_{0,i}\cap\sX_{0,i+1}\neq\emptyset$ for all $i$. We also set $C_{\sX_{0,i}}\coloneqq\sX_{0,i}\cap\sX^{\sing}_{0}$.\par An Enriques surface has invariants $p_{q}=q=0$. Hence, Lemma \ref{Lem:4.1} yields the isomorphism $\br(\sX)\cong\br(\sX_{0})_{\tor}$ and it suffices to prove $\br(\sX_{0})=0$. The exact sequence \eqref{5.2} holds here as well and so we only need to show that the map $r$ is surjective. Recall that the special fibre $\sX_{0}$ is an elliptic chain with rational end-component $\sX_{0,n},$ and thus the relation \eqref{5.3} is also true. It is therefore enough to prove that $([x],0,\ldots,0)\in\im(r)$ for a single point $x\in C_{1,2}$, as $C_{1,2}$ is an \'etale double cover onto the base of the elliptic ruled component $\sX_{0,1}$ (cf. the argument in Lemma \ref{lem:5.2}).\par We claim that there is at least one component $\sX_{0,i}$, which is not a minimal surface. The canonical bundle formula gives $K_{\sX_{0,i}}=K_{\sX}\arrowvert_{\sX_{0,i}}-C_{\sX_{0,i}}$. Since $K_{\sX}$ is $2$-torsion in $\Pic(\sX)$, we obtain $K_{\sX_{0,i}}+C_{\sX_{0,i}}\equiv0$. For a contradiction, assume now that all components of the special fibre $\sX_{0}$ are minimal. As $\sX_{0,i}$ are ruled for $i=1,\ldots,n-1$, we get $K^{2}_{\sX_{0,i}}=C^{2}_{\sX_{0,i}}=0$. Recall that $\sX_{0,n}$ is a rational surface and so $K^{2}_{\sX_{0,n}}=C^{2}_{n-1,n}=8$ or $9$. The first relation yields $C^{2}_{n-1,n}=0$, which contradicts the last one.\par Pick a component $\sX_{0,i}$, which is not minimal. For a $(-1)$-rational curve $D\subset\sX_{0,i}$, the adjunction formula $(K_{\sX_{0,i}}+D).D=-2$ yields $D.C_{\sX_{0,i}}=1$. Therefore, $r(0,\ldots,[D],0,\dots,0)=(0,\ldots,[x],\ldots,0)$ for some point $x\in C_{j-1,j}$, where $j=i$ or $i+1$. By \eqref{5.3} we also deduce a point $x\in C_{1,2}$, such that $([x],\ldots,0)\in\im(r),$ finishing the proof.\end{proof}
As an immediate consequence of Theorem \ref{thm:6.2}, we obtain:
\begin{corollary}\label{cor:6.5} Any member $X$ in $\sH$ cannot be degenerated to a singular Enriques surface, such that after applying semi-stable reduction the special fibre becomes of type $(\text{ii}a)$.\end{corollary}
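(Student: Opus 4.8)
The plan is to obtain Corollary 6.5 as the contrapositive of Theorem 6.2. Suppose, for contradiction, that some $X\in\sH$ can be degenerated to a singular Enriques surface whose semi-stable reduction is of type $(\text{ii}a)$. Unwinding the definition, this produces a type $(\text{ii}a)$ degeneration $\sX\to\Spec R$ of Enriques surfaces over a discrete valuation ring $R$, which we may take to be $\bbC[[t]]$ so that $K=\bbC((t))$ and $k=\bbC$ satisfy the hypotheses of Theorem \ref{thm:6.2}. The geometric generic fibre $\sX_{\bar{\eta}}$ is then isomorphic to the base change $X_{\bar{K}}\coloneqq X\times_{\bbC}\bar{K}$, and the canonical cover $q\colon\sY\to\sX$ restricts on $\sX_{\bar{\eta}}$ to the $K3$ double cover of $X_{\bar{K}}$.

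First I would identify the pull-back $q^{*}_{\bar{\eta}}$ with the map $q^{*}_{X}$ attached to $X$ itself. Since both $\bbC$ and $\bar{K}$ are algebraically closed of characteristic $0$, the $2$-torsion Brauer groups, the N\'eron--Severi lattices, and the canonical ($K3$) double cover are all insensitive to the base change $\bbC\to\bar{K}$; in particular one has compatible isomorphisms $\br(\sX_{\bar{\eta}})[2]\cong\br(X)[2]\cong\bbZ/2$ and $\br(\sY_{\bar{\eta}})[2]\cong\br(Y)[2]$ under which $q^{*}_{\bar{\eta}}$ corresponds to $q^{*}_{X}$. Equivalently, membership in $\sH$ is detected by Beauville's lattice-theoretic criterion (\cite[Proposition 6.2]{bea}), which depends only on the moduli point $[X]=[\sX_{\bar{\eta}}]$ and is preserved under extension of algebraically closed fields of characteristic $0$.

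Next I would play off the two competing conclusions. Because $X\in\sH$, we have $q^{*}_{X}=0$, whence $q^{*}_{\bar{\eta}}=0$ on $\br(\sX_{\bar{\eta}})[2]$. On the other hand, Theorem \ref{thm:6.2} asserts that for any type $(\text{ii}a)$ degeneration the map $q^{*}_{\bar{\eta}}\colon\br(\sX_{\bar{\eta}})[2]\to\br(\sY_{\bar{\eta}})[2]$ is non-zero. This is a contradiction, so no member of $\sH$ can be degenerated to type $(\text{ii}a)$, which is precisely the assertion of the corollary.

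The one genuinely delicate point — and hence the step I would take the most care over — is the identification $q^{*}_{\bar{\eta}}=q^{*}_{X}$ of the second paragraph: one must check that passing from the complex surface $X$ to the geometric generic fibre of the degeneration neither alters the $2$-torsion of the Brauer group nor changes whether the canonical-cover pull-back vanishes. Everything else is a purely formal contraposition of Theorem \ref{thm:6.2}.
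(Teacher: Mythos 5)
Your proposal is correct and follows exactly the route the paper intends: the paper offers no written proof, stating the corollary as an immediate consequence of Theorem \ref{thm:6.2}, and your argument is precisely the contrapositive of that theorem. The extra care you take in identifying $q^{*}_{\bar{\eta}}$ with $q^{*}_{X}$ under the extension of algebraically closed fields of characteristic $0$ (invariance of torsion in the Brauer group and of the canonical cover) is the right point to flag and is standard, so there is nothing missing.
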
\section{Application III: Quotients of Products of Curves} As a last application of Theorem \ref{thm:1.4}, we prove the following.\begin{theorem}\label{thm:7.1}Let $E$ be a smooth elliptic curve and $C$ a smooth projective curve over $\bbC$. Let $\psi$ be an automorphism of $C$ of finite order $d,$ with $\Fix(\psi)\neq\emptyset$. Assume $\Hom(E,\Jac(C))=\{0\}$. Then there exists $0\neq\tau\in E[d]$ with the following property: If $Y\coloneqq E\times C$ and $q_{\tau}\colon Y\to X\coloneqq E\times C/(\bbZ/d)$ denotes the quotient by the diagonal action \[(x,y)\mapsto (x+\tau,\psi(y)),\tag{7.1}\label{7.1}\] then for any $\alpha\in\br(X)\setminus\br(X)_{\div},$ the pullback $q^{*}_{\tau}\alpha\in\br(Y)$ is non-zero.\end{theorem}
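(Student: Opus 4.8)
The plan is to reduce the statement to the inclusion $\ker(q_{\tau}^{*})\subseteq\br(X)_{\div}$ and to prove it by degenerating the elliptic factor $E$. I would first fix $\tau\in E[d]$ of exact order $d$; since translation by such a $\tau$ acts freely on $E$, the diagonal action \eqref{7.1} is fixed point free and $q_{\tau}\colon Y\to X$ is a connected finite \'etale Galois cover of degree $d$ with group $\bbZ/d$. The assertion of the theorem is then precisely that $\ker(q_{\tau}^{*})$ contains no class outside the maximal divisible subgroup.

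To run the degeneration, I would realise $(E,\tau)$ as the fibre over a point $b_{1}$ of a smooth curve $B$ carrying a family of elliptic curves $\sE\to B$ with a section of exact order $d$, chosen so that over the completed local ring $R$ at some $b_{0}\in B$ the curve $\sE$ has multiplicative reduction. After the ramified base change $t\mapsto t^{2}$ allowed by Theorems \ref{thm:1.4} and \ref{thm:3.1}, the special fibre becomes a N\'eron $2d$--gon on which $\tau$ reduces to an element of order $d$ and index $2$ in the component group $\bbZ/2d$, so translation by $\tau$ rotates the polygon by two steps. Putting $\sY\coloneqq\sE\times_{R}C_{R}$ with the diagonal $\bbZ/d$--action \eqref{7.1} and $\sX\coloneqq\sY/(\bbZ/d)$, and restoring strict semi-stability by \cite[Proposition 2.2]{Har}, I obtain a finite \'etale Galois cover $q\colon\sY\to\sX$ of degree $d$ between triple--point free strictly semi-stable $R$--schemes. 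Here $\sX_{0}$ is a cycle of two copies of $\bbP^{1}\times C$ glued along two disjoint double curves isomorphic to $C$; since $\tau$ moves every component, the action stays fixed point free and the preimage of each double curve is $d$ disjoint copies of $C$ freely permuted by $\bbZ/d$. Thus $q_{0}$ is trivial over $\sX_{0}^{\sing}\cap\sX_{0,i}$ for every component, and all hypotheses of Theorem \ref{thm:3.1} are met. The assumption $\Fix(\psi)\neq\emptyset$ enters here, pinning down the fibration $X\to C/\langle\psi\rangle$ and hence the shape of the special fibre.

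The decisive step is to show that the image of $\br(\sX)\to\br(\sX_{\bar\eta})$ is divisible. Using the Mayer--Vietoris sequence of Lemma \ref{lem:2.3}, together with the facts that $\br(\bbP^{1}\times C)=0$ and that line bundles on $\bbP^{1}\times C$ restrict to the two sections through the projection to $\Pic(C)$, I would identify $\br(\sX_{0})$ with $\coker\bigl(\psi^{*}-1\mid\Pic(C)\bigr)$. Its $\ell$--primary part comes from $\Pic^{0}(C)=\Jac(C)$ and is the torsion of the $\bar{k}$--points of the abelian variety $\Jac(C)/\im(\psi^{*}-1)$, hence is divisible. Feeding this into the comparison \eqref{4.13}--\eqref{4.14}, whose kernel term $\coker(\Pic\sX\to\Pic\sX_{0})\otimes\bbQ_{\ell}/\bbZ_{\ell}$ is automatically divisible and splits off because divisible groups are injective, shows that $\br(\sX)\{\ell\}$ is divisible for every $\ell$. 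Consequently any class in $\br(\sX_{\bar\eta})\{\ell\}$ that lifts to $\br(\sX)$ lies in the maximal divisible subgroup, and Theorem \ref{thm:3.1} turns this into $\ker(q_{\bar\eta}^{*})\{\ell\}\subseteq\br(\sX_{\bar\eta})_{\div}$.

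It remains to transport the conclusion from the generic fibre to $b_{1}$. Since $\Hom(E,\Jac(C))=0$ forces $\NS(E\times C)=\bbZ^{2}$, the Picard numbers satisfy $\rho(\sY_{\bar t})=2=\rho(\sX_{\bar t})$ both at the generic point and at $b_{1}$, so Lemma \ref{lem:4.4} yields specialization isomorphisms $\br(\sX_{\bar\eta})\{\ell\}\cong\br(X)\{\ell\}$ and $\br(\sY_{\bar\eta})\{\ell\}\cong\br(Y)\{\ell\}$ that commute with $q^{*}$ and carry maximal divisible subgroups onto maximal divisible subgroups. This gives $\ker(q_{\tau}^{*})\{\ell\}\subseteq\br(X)_{\div}$ for all $\ell$, which is the theorem. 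The main obstacle I expect is the bookkeeping of the strictly semi-stable models after the ramified base change and resolution: one must check that every component introduced by the resolution is rational or ruled over $C$, so that its Brauer group vanishes and the N\'eron--Severi contributions to $\br(\tilde{\sX}_{0})$ carry no non-divisible torsion. Propagating the divisibility of $\br(\sX)\{\ell\}$ through these modifications---the analogue in this setting of Lemmas \ref{lem:5.2} and \ref{lem:6.3}---is the real work, whereas the formal deduction above is routine.
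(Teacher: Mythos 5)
Your overall strategy coincides with the paper's: degenerate the elliptic factor to a N\'eron polygon so that translation by $\tau$ rotates the components, check the triviality hypothesis of Theorem \ref{thm:3.1} over the double curves, show that any class of $\br(\sX_{\bar\eta})$ lifting to the total space must be divisible, and transport the conclusion back to $E$ by specialization through a family with constant Picard number (the paper uses the Legendre family and Lemma \ref{lem:4.4} in exactly this way). There is, however, one genuine gap in your set-up. You \emph{fix} an arbitrary $\tau\in E[d]$ of exact order $d$ and then assert that $(E,\tau)$ can be placed in a family which, at some $b_{0}$, acquires multiplicative reduction with $\tau$ reducing to an element of exact order $d$ in the component group. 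This is precisely the delicate point: for a given degeneration the reduction of $\tau$ may well land in the identity component of the N\'eron model, in which case translation by $\tau_{0}$ fixes the nodes of the polygon, the diagonal action acquires fixed points at (node)$\times\Fix(\psi)$, and the whole mechanism (\'etale quotient, triviality of $q_{0}$ over the double curves) breaks down. Arranging the reduction type you want for a \emph{prescribed} $\tau$ requires an argument (irreducibility of the moduli of pairs $(E,\tau)$ with $\tau$ of exact order $d$, plus identification of a cusp where the torsion point generates the component group), which you do not supply. The paper avoids this entirely by reversing the quantifiers: it builds the degeneration first, reads off which $\tau$ it produces, and only claims existence of one good $\tau$ --- which is all the theorem asserts. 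As written, your proof proves more than is needed and justifies less than it claims; you should either add the moduli-theoretic input or retreat to the existence statement.

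Where you genuinely diverge from the paper --- and, as far as I can see, correctly --- is in the divisibility step. The paper's route is Lemma \ref{lem:7.3}: it uses the fixed point of $\psi$ to produce sections of $\sX\to\Spec R$ whose classes generate $\Pic(\sX_{0})\otimes\bbZ/\ell^{r}$, hence shows $\br(\sX)\{\ell\}\cong\br(\sX_{0})\{\ell\}$, and only then invokes divisibility of $\Pic(C_{0})\{\ell\}$. You instead observe that in the exact sequence coming from \eqref{4.13}, namely $0\to\coker(\Pic(\sX)\to\Pic(\sX_{0}))\otimes\bbQ_{\ell}/\bbZ_{\ell}\to\br(\sX)\{\ell\}\to\br(\sX_{0})\{\ell\}\to0$, both outer terms are divisible (the kernel because anything tensored with $\bbQ_{\ell}/\bbZ_{\ell}$ is divisible, the quotient because $\br(\sX_{0})\{\ell\}$ is the $\ell$-primary torsion of a quotient of $\Jac(C)(\bar k)$ by Lemma \ref{lem:7.2}), so $\br(\sX)\{\ell\}$ is divisible without knowing the restriction map is an isomorphism. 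This is a real simplification: it makes Lemma \ref{lem:7.3} unnecessary for this application. A consequence is that your explanation of where $\Fix(\psi)\neq\emptyset$ enters (``pinning down the shape of the special fibre'') is wrong --- the shape of $\sX_{0}$ does not depend on it; in the paper the hypothesis is used only in Lemma \ref{lem:7.3}, and your argument appears not to use it at all. That should prompt a careful double-check, but I do not see an error; you do still need to carry the divisibility through every model obtained after ramified base change and resolution, as you note, and your identification of $\br(\sX_{0})$ with $\coker(\psi^{*}-1\mid\Pic(C))$ (rather than the untwisted $\Pic(C)$ of Lemma \ref{lem:7.2}) correctly accounts for the monodromy twist in the gluing of the quotient cycle.
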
 We follow the same strategy as in the previous sections. We first review degenerations of elliptic curves.\subsection{Cycle degenerations}\label{subsec:7.1}Let $R$ be a discrete valuation ring with residue field $k$ of any characteristic and fraction field $K$. Let $E$ be an elliptic curve over $K$. It is well-known that we can always find a finite extension $R\subset\tilde{R}$ of discrete valuation rings, such that the fraction field $\tilde{K}$ of $\tilde{R}$ is a finite seperable field extension of $K$ and the minimal model $\sE\to\Spec\tilde{R}$ of the base change $E_{\tilde{K}}$ is semi-stable over $\tilde{R}$ (see \cite[\href{https://stacks.math.columbia.edu/tag/0CDM}{Tag 0CDM}]{stacks-project}). There are two particular possibilities for the special fibre $E_{0}\coloneqq\sE_{0}$:\begin{enumerate}
    \item [$(I_{0})$] $E_{0}$ is a smooth elliptic curve.
    \item [$(I_{\nu})$] $E_{0}$ is a N\'eron $\nu$-gon for some integer $\nu\geq 2$, i.e. $E_{0}$ is isomorphic to the quotient of $\bbP^{1}_{\tilde{k}}\times(\bbZ/\nu)$ obtained by identifying the $\infty$-section of the $i$-th copy of $\bbP^{1}$ with the $0$-section of $(i+1)$-st.  
\end{enumerate}\par The smooth locus $\sE^{\sm}$ is a commutative smooth group scheme over $\tilde{R}$, which is the N\'eron model of $E_{\tilde{K}}$. In case $E_{0}$ is a N\'eron $\nu$-gon, the smooth locus of the special fibre is the affine group scheme $\bbG_{m,\tilde{k}}\times\bbZ/\nu$. If furthermore, $\nu$ is invertible in $\tilde{R}$, then the group scheme $\sE^{\sm}[\nu]\coloneqq\ker(\sE^{\sm}\overset{\times\nu}\to\sE^{\sm})$ is \'etale locally isomorphic to $(\bbZ/\nu)^{2}$ (see \cite[Proposition 20.7]{ab}) and thus, after an unramified base change of discrete valuation rings, we may assume that \[\sE^{\sm}[\nu]\cong(\bbZ/\nu)^{2}.\tag{7.2}\label{7.2}\]\par The natural action of $\sE^{\sm}[\nu]$ on $\sE^{\sm}$ extends to $\sE$ (see \cite[Proposition 9.3.13]{lq}). The action on the special fibre $E_{0}$ can be described as follows: The first direct summand of $E^{\sm}_{0}[\nu]\cong\mu_{\nu}\times\bbZ/\nu$ acts on each component of the special fibre via multiplication by $\nu$-th roots of unity and fixes the two vertices $0$ and $\infty$, while the second summand rotates the components.\par\subsection{Proof of Theorem \ref{thm:7.1}}\label{subsec:7.2} Fix a positive integer $d>1$. Let $\sE\to\Spec R$ be a semi-stable model  of an elliptic curve $E_{K}$, whose special fibre is of type $I_{\kappa d}$ for some $\kappa\geq2$ and $\nu\coloneqq \kappa d$ is invertible in $R$. Up to some unramified base change, we may assume that \eqref{7.2} holds. There are exactly $n_{d}\coloneqq d\varphi(d)$ points $0\neq\tau\in\sE^{\sm}[d]$, such that the quotient map $E_{0}\to E_{0}/\tau$ is trivial over each component of $E_{0}/\tau$. Via the isomorphism $E^{\sm}_{0}[\nu]\cong\mu_{\nu}\times\bbZ/\nu$, they correspond to tuples ($\zeta^{\kappa},i\kappa$), where $\zeta$ is a $d$-th root of unity and $1\leq i\leq\nu$ is an integer prime to $d$.\par Pick any $0\neq\tau\in\sE^{\sm}[d]$ from these $n_{d}$ points. Consider a smooth proper family of curves $\sC\to\Spec R$ and an $R$-automorphism $\psi$ of order $d$ that fixes a section of this family. We let $\bbZ/d$ act diagonally on the product $\sY\coloneqq\sE\times_{R}\sC$ as in \eqref{7.1} and form the quotient \[q_{\tau}\colon\sY\to\sX\coloneqq(\sE\times_{R}\sC)/(\bbZ/d) \label{7.6}. \tag{7.3}\]\par In contrast to the previous two applications, the Brauer group of the special fibre of a cycle degeneration has infinitely many torsion elements.\begin{lemma}\label{lem:7.2} Let $R$ be a discrete valuation ring with fraction field $K$ and residue field $k$. Let $N_{0}$ be a smooth projective variety over $k$. Consider a triple-point free semi-stable degeneration $\sN\to\Spec R$, such that the dual graph $\Gamma$ of $\sN_{0}$ is a cycle with components $\sN_{0,i}\cong\bbP^{1}\times N_{0}$ and each double intersection $\sN_{0,i}\cap\sN_{0,i+1}$ is isomorphic to a fibre of $\bbP^{1}\times N_{0}\to\bbP^{1}$. Then there is a short exact sequence \[0\to\Pic(N_{0})\to\br(\sN_{0})\to\br(N_{0})\to0.\tag{7.4}\label{7.3}\]\end{lemma}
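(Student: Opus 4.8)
The plan is to feed the configuration directly into the Mayer--Vietoris sequence \eqref{2.4} of Lemma~\ref{lem:2.3}, which applies because $\sN\to\Spec R$ is triple-point free. Write $\sN_{0,1},\dots,\sN_{0,m}$ for the components, indexed cyclically so that the nonempty double curves are exactly $C_{i}\coloneqq\sN_{0,i}\cap\sN_{0,i+1}\cong N_{0}$ (indices read mod $m$). The relevant stretch of \eqref{2.4} is
\[\bigoplus_{i}\Pic(\sN_{0,i})\overset{r_{1}}\to\bigoplus_{i}\Pic(C_{i})\to\br(\sN_{0})\overset{r_{2}}\to\bigoplus_{i}\br(\sN_{0,i})\overset{s}\to\bigoplus_{i}\br(C_{i}),\]
where $s$ is the Brauer-group analogue of $r_{1}$. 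Exactness repackages $\br(\sN_{0})$ as a short exact sequence $0\to\coker(r_{1})\to\br(\sN_{0})\to\ker(s)\to0$, so it suffices to compute $\coker(r_{1})$ and $\ker(s)$.

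For the inputs, each component $\sN_{0,i}\cong\bbP^{1}\times N_{0}$ is a trivial $\bbP^{1}$-bundle over $N_{0}$ with projection $\pi_{i}$, so $\Pic(\sN_{0,i})=\bbZ\cdot h_{i}\oplus\pi_{i}^{*}\Pic(N_{0})$ with $h_{i}$ the pullback of $\sO_{\bbP^{1}}(1)$, while the projective-bundle formula for Brauer groups gives $\br(\sN_{0,i})=\pi_{i}^{*}\br(N_{0})\cong\br(N_{0})$ (cf.\ \cite{cts}). Since $C_{i}$ is a fibre of $\sN_{0,i}\to\bbP^{1}$, restriction to $C_{i}$ annihilates $h_{i}$ and is the identity on the $N_{0}$-factor, because $\pi_{i}$ maps $C_{i}$ isomorphically onto $N_{0}$. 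Granting that the two identifications $C_{i}\cong N_{0}$ induced by $\sN_{0,i}$ and $\sN_{0,i+1}$ compose around the cycle to the identity of $N_{0}$ (trivial monodromy, as in the geometric situation of \S\ref{subsec:7.2}), both the restriction of $r_{1}$ to $\bigoplus_{i}\pi_{i}^{*}\Pic(N_{0})$ and the whole of $s$ become, after these identifications, the cyclic difference map $\partial\colon A^{m}\to A^{m}$, $(a_{i})\mapsto(a_{i}-a_{i+1})_{i}$, with $A=\Pic(N_{0})$ and $A=\br(N_{0})$ respectively.

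It then remains to read off the (co)homology of $\partial$. For the cyclic graph, $\partial$ has kernel the diagonal $\cong A$ and image the augmentation kernel $\{(a_{i}):\sum_{i}a_{i}=0\}$, hence cokernel $\cong A$ via the sum map; moreover the summand $\bigoplus_{i}\bbZ\cdot h_{i}$ of $\bigoplus_{i}\Pic(\sN_{0,i})$ maps to $0$ under $r_{1}$ and so does not enlarge $\im(r_{1})$. Therefore $\coker(r_{1})\cong\Pic(N_{0})$ and $\ker(s)\cong\br(N_{0})$, and substituting into the short exact sequence above yields exactly \eqref{7.3}.

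The two points needing genuine care are, first, the identity $\br(\bbP^{1}\times N_{0})\cong\br(N_{0})$ together with the fact that restriction to a fibre (a section of $\pi_{i}$) is an isomorphism, and second the monodromy bookkeeping: a nontrivial total monodromy automorphism $\Theta\in\Aut(N_{0})$ obtained by running around the cycle would instead replace the outer terms by the coinvariants $\Pic(N_{0})_{\Theta}$ and the invariants $\br(N_{0})^{\Theta}$, so one must verify that $\Theta=\id$ for the clean sequence \eqref{7.3} to hold.
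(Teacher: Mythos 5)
Your proof matches the paper's essentially line for line: both feed the cycle configuration into the Mayer--Vietoris sequence \eqref{2.4}, observe that the fibre class generating the $\Pic(\bbP^{1})$-summand restricts trivially to the double curves while $\br(\bbP^{1}\times N_{0})\cong\br(N_{0})$ by stable birational invariance of the Brauer group, and then identify both boundary maps with the cyclic difference map, whose cokernel and kernel are $\Pic(N_{0})$ (via the sum map) and $\br(N_{0})$ (the diagonal). Your closing caveat about trivial monodromy around the cycle is a fair point that the paper passes over in silence, but it is satisfied in the only situation where the lemma is used (the cycle degenerations of \S\ref{subsec:7.2}), so nothing is lost.
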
\begin{proof} The Mayer-Vietoris sequence \eqref{2.4} gives rise to the following exact sequence \[\bigoplus^{n}_{i=1}\Pic (\bbP^{1}\times N_{0})\overset{\alpha}\longrightarrow\bigoplus^{n}_{i=1}\Pic(N_{0})\longrightarrow\br(\sN_{0})\longrightarrow\bigoplus^{n}_{i=1}\br(\bbP^{1}\times N_{0})\overset{\beta}\longrightarrow\bigoplus^{n}_{i=1}\br(N_{0}).\label{7.7}\tag{7.5}\] We note that $\Pic(\bbP^{1}\times N_{0})\cong\Pic(\bbP^{1})\times\Pic(N_{0})\cong\bbZ[N_{0}]\times \Pic(N_{0})$. Since the restriction of $\sO_{\bbP^{1}\times N_{0}}([N_{0}])$ to $N_{0}$ is the trivial line bundle, we deduce that the cokernel of $\alpha$ coincides with the one of the map $\bigoplus^{n}_{i=1}\Pic(N_{0})\to\bigoplus^{n}_{i=1}\Pic(N_{0}),\ (D_{1},\ldots,D_{n})\mapsto (D_{1}-D_{2},\ldots,D_{n}-D_{1}).$ It is readily checked that the summation map $\bigoplus^{n}_{i=1}\Pic(N_{0})\to\Pic(N_{0}),(D_{i})_{i}\mapsto \Sigma_{i}D_{i}$ yields an isomorphism $\coker(\alpha)\cong \Pic(N_{0})$. On the other hand, recall that the Brauer group of smooth projective varieties is a stably birational invariant (see \cite[Proposition 6.2.9]{cts}). Hence, we have a natural isomorphism $\br(N_{0})\cong\br(N_{0}\times\bbP^{1})$ (see \cite[Corollary 6.2.11]{cts}). Under the above identification the map $\beta$ takes the form $(\alpha_{1},\ldots,\alpha_{n})\mapsto(\alpha_{1}-\alpha_{2},\ldots,\alpha_{n}-\alpha_{1})$ and so its kernel is isomorphic to $\br(N_{0})$. The exact sequence \eqref{7.3} thus follows from \eqref{7.7}.\end{proof}\begin{lemma}\label{lem:7.3} Let $R$ be a strictly henselian discrete valuation ring. Let $q_{\tau}\colon\sY\to\sX\coloneqq(\sE\times_{R}\sC)/(\bbZ/d)$ be the quotient map in \eqref{7.6}. For any prime $\ell$ invertible in $R$, restriction yields isomorphisms  \[\br(\sY)\{\ell\}\overset{\cong}\to\br(\sY_{0})\{\ell\}\tag{7.6}\label{7.4}\]\[\br(\sX)\{\ell\}\overset{\cong}\to\br(\sX_{0})\{\ell\}.\tag{7.7}\label{7.5}\]\end{lemma}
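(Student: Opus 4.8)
The plan is to deduce both isomorphisms from the general criterion \eqref{4.14}, so that the whole problem is reduced to controlling two restriction maps on Picard groups. First I would record that both families are strictly semi-stable: $\sY=\sE\times_{R}\sC$ is regular, being smooth over the regular scheme $\sE$, with simple normal crossing special fibre $\sY_{0}=E_{0}\times C_{0}$; and $\sX=\sY/(\bbZ/d)$ is regular with $\sX_{0}=\sY_{0}/(\bbZ/d)$ because the $\bbZ/d$-action is free, so both $\sY\to\Spec R$ and $\sX\to\Spec R$ are triple-point free strictly semi-stable. Since $R$ is strictly henselian and both families are proper, the proper base change theorem (see \cite[Corollary VI.2.7]{mil}) makes the middle vertical map of the diagram (4.10) an isomorphism in each case; running the snake Lemma exactly as in the derivation of \eqref{4.14}, the restriction map $\br(\sY)\{\ell\}\to\br(\sY_{0})\{\ell\}$ is an isomorphism if and only if $\coker(\Pic(\sY)\to\Pic(\sY_{0}))\otimes\bbQ_{\ell}/\bbZ_{\ell}=0$, and likewise for $\sX$.

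For the cover $\sY$ I would prove that $\Pic(\sY)\to\Pic(\sY_{0})$ is in fact surjective, using the two projections $\pr_{1}\colon\sY\to\sE$ and $\pr_{2}\colon\sY\to\sC$. Because every fibre of $\sE\to\Spec R$ and of $\sC\to\Spec R$ is a curve, the groups $H^{2}(E_{0},\sO)$ and $H^{2}(C_{0},\sO)$ vanish, so the relative Picard functors are smooth (see \cite[Theorem 2.5, Theorem 4.8]{kl}); as $R$ is henselian with separably closed residue field (whence $\br(R)=0$, so $\Pic(\sE)=\Pic_{\sE/R}(R)$ and $\Pic(\sC)=\Pic_{\sC/R}(R)$ via the available sections), smoothness forces the reduction maps $\Pic(\sE)\to\Pic(E_{0})$ and $\Pic(\sC)\to\Pic(C_{0})$ to be onto. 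On the other hand, the Mayer-Vietoris sequence \eqref{2.4} applied to the cyclic configuration $\sY_{0}$, whose components are the $\bbP^{1}\times C_{0}$ and whose double curves are fibres $\{\pt\}\times C_{0}$, identifies $\Pic(\sY_{0})$ as generated by $\pr_{1}^{*}\Pic(E_{0})$ (the per-component degrees together with the $\Pic^{0}(E_{0})\cong\bbG_{m}$ gluing classes coming from the $H^{0}$-cokernel term of \eqref{2.4}) and by $\pr_{2}^{*}\Pic(C_{0})$ (the common restriction to the double curves). Pulling back extensions of line bundles along $\pr_{1}$ and $\pr_{2}$ then shows $\Pic(\sY)\to\Pic(\sY_{0})$ is surjective, so its cokernel vanishes and \eqref{7.4} follows.

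For the quotient $\sX$ I would argue by transfer rather than by computing $\Pic(\sX_{0})$ directly. The map $q\colon\sY\to\sX$ is finite \'etale of degree $d$ with $d$ invertible in $R$, and its restriction $q_{0}\colon\sY_{0}\to\sX_{0}$ is again finite \'etale of degree $d$, being the base change of $q$ along $\sX_{0}\hookrightarrow\sX$. Given any $\alpha_{0}\in\Pic(\sX_{0})$, the surjectivity just established on $\sY$ lets me lift $q_{0}^{*}\alpha_{0}$ to some $\beta\in\Pic(\sY)$; applying the norm (pushforward) $q_{*}$ and using its compatibility with base change, $q_{*}(\beta)|_{\sX_{0}}=q_{0*}(\beta|_{\sY_{0}})=q_{0*}q_{0}^{*}\alpha_{0}=d\,\alpha_{0}$, so $d\,\alpha_{0}$ lies in the image of $\Pic(\sX)\to\Pic(\sX_{0})$. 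Hence $\coker(\Pic(\sX)\to\Pic(\sX_{0}))$ is killed by $d$; being torsion of bounded exponent it has trivial tensor product with $\bbQ_{\ell}/\bbZ_{\ell}$, and \eqref{7.5} follows from \eqref{4.14}.

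The hard part is the surjectivity statement for $\sY$ in the second paragraph: one must lift line bundles across the singular, reducible special fibre $E_{0}$ over a base that is only henselian (not complete), which is exactly where the smoothness of $\Pic_{\sE/R}$ and the vanishing of $\br(R)$ are needed, and one must correctly capture the $\Pic^{0}(E_{0})\cong\bbG_{m}$ classes that a naive component-by-component analysis would miss. Once surjectivity on $\sY$ is secured, the transfer step renders the $\sX$-case essentially formal.
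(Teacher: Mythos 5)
Your proposal is correct, and it follows the paper's reduction — both arguments run the snake lemma on diagram (4.10) and reduce the lemma to the criterion \eqref{4.14} on $\coker(\Pic(\sX)\to\Pic(\sX_0))$ — but you execute the key lifting step quite differently. The paper works entirely with explicit divisors: it tensors the Mayer--Vietoris description of $\Pic(\sY_0)$ and $\Pic(\sX_0)$ with $\bbZ/\ell^r$ (so the $k^*$-part and $\Pic^0(C_0)$ disappear by divisibility), and then exhibits lifts of the remaining generators as restrictions of the horizontal divisors $D_i=\im(s_i\times\id_{\sC})$ coming from the $\nu$ torsion sections of $\sE^{\sm}$, of $D=\im(\id_{\sE}\times x)$ for $x$ a fixed section of $\psi$, and of their images $F_i$, $F$ in the quotient $\sX$. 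You instead prove the stronger statement that $\Pic(\sY)\to\Pic(\sY_0)$ is surjective on the nose, via smoothness of $\Pic_{\sE/R}$ and $\Pic_{\sC/R}$ over the henselian base together with the Künneth-type generation of $\Pic(\sY_0)$ by $\pr_1^*\Pic(E_0)$ and $\pr_2^*\Pic(C_0)$ (your accounting of the $\bbG_m$-worth of gluing classes is the right point to worry about, and your argument does capture them), and you then handle $\sX$ by a norm/transfer argument showing the cokernel is killed by $d$. Each route has its advantages: yours is more conceptual and robust — the transfer step makes the quotient case formal for any finite étale quotient and avoids re-examining $\Pic(\sX_0)$ — while the paper's divisors are elementary and avoid invoking representability and smoothness of the relative Picard functor of the $\nu$-gon degeneration $\sE/R$, which is only an algebraic space (and non-separated); that machinery does work over a henselian base, but you should flag it, or note that the torsion sections of $\sE^{\sm}$ already in \S 7.1 give the multidegree classes directly if one prefers to bypass it. Note also that the paper only needs surjectivity after $\otimes\,\bbZ/\ell^r$, which is why it never has to lift the $k^*$-classes at all.
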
\begin{proof} Note that the sufficient conditions of Lemmas \ref{Lem:4.1} and \ref{Lem:4.3} are not true here and so we cannot get \eqref{7.4}, \eqref{7.5} for free. Instead, we proceed as follows. The Mayer-Vietoris sequence \eqref{2.4} gives rise to the following short exact sequences:\[1\to k^{*}\to\Pic (\sY_{0})\to (\bigoplus^{\nu}_{i=1}\bbZ[C_{0}])\times \Pic(C_{0})\to 1\]\[1\to k^{*}\to\Pic (\sX_{0})\to (\bigoplus^{\kappa}_{i=1}\bbZ[C_{0}])\times\Pic(C_{0})\to 1.\] Here, we think of $\Pic(C_{0})$ as a subgroup of $\bigoplus\Pic(\bbP^{1}\times C_{0})$ via the diagonal. Since $R$ is strictly henselian, the residue field $k$ is seperably closed and tensoring with $\bbZ/\ell^{r}$ gives isomorphisms \[\Pic (\sY_{0})\otimes\bbZ/\ell^{r}\cong (\bigoplus^{\nu}_{i=1}(\bbZ/\ell^{r})[C_{0}])\times (\Pic(C_{0})\otimes\bbZ/\ell^{r})\] \[\Pic (\sX_{0})\otimes\bbZ/\ell^{r}\cong (\bigoplus^{\kappa}_{i=1}(\bbZ/\ell^{r})[C_{0}])\times (\Pic(C_{0})\otimes\bbZ/\ell^{r}).\]\par By \eqref{4.13} it suffices to prove \[\frac{\Pic(\sY_{0})}{\Pic(\sY)}\otimes\bbQ_{\ell}/\bbZ_{\ell}=0\ \text{and}\ \frac{\Pic(\sX_{0})}{\Pic(\sX)}\otimes\bbQ_{\ell}/\bbZ_{\ell}=0.\]\par Using the group scheme structure of $\sE^{\sm}$, we find $\nu$ sections $s_{i}\colon \Spec R\to\sE^{\sm}$, one for each component of the special fibre. We set $D_{i}\coloneqq\im(s_{i}\times\id_{\sC})\subset\sY$ and $F_{i}\coloneqq q(D_{i})\subset\sX$ for all $i$. Clearly, the restrictions $D_{i}|_{\sY_{0}}\in\Pic(\sY_{0})$ (resp. $F_{i}|_{\sX_{0}}\in\Pic(\sX_{0})$) are generators in $\bigoplus^{\nu}_{i=1}(\bbZ/\ell^{r})[C_{0}]$ (resp. $\bigoplus^{\kappa}_{i=1}(\bbZ/\ell^{r})[C_{0}]$).\par Recall that $\Pic(C_{0})\otimes\bbZ/\ell^{r}\cong\bbZ/\ell^{r}$ is generated by any closed point. Pick a fixed point $x\in\sC(R)$ of $\psi$ and consider the prime divisors $D\coloneqq\im(\id_{\sE}\times x)\subset \sY$ and the quotient $F\coloneqq D/(\bbZ/d)\subset \sX$. Then the restrictions $D|_{\sY_{0}}\in\Pic(\sY_{0})$ and $F|_{\sX_{0}}\in\Pic(\sX_{0})$ yield generators of $\Pic(C_{0})\otimes\bbZ/\ell^{r}$, finishing the proof.\end{proof}\begin{proposition}\label{prp:7.4}Let $R$ be a discrete valuation ring with fraction field $K$ and algebraically closed residue field $k$. Let $q_{\tau}\colon\sY\to\sX\coloneqq(\sE\times_{R}\sC)/(\bbZ/d)$ be the quotient map in \eqref{7.6}. Let $\ell$ be a prime invertible in $R$. For any class $\alpha\in\br(\sX_{\bar{\eta}})\{\ell\}\setminus\br(\sX_{\bar{\eta}})_{\div}\{\ell\},$ the pull-back to $\sY_{\bar{\eta}}$ is non-zero: $0\neq q^{*}_{\bar{\eta}}\alpha\in\br(\sY_{\bar{\eta}})$.\end{proposition}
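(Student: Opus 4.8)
The plan is to argue by contraposition and feed the covering $q_\tau$ of \eqref{7.6} into the class-level criterion of Theorem~\ref{thm:3.1}. So suppose that $q^{*}_{\bar{\eta}}\alpha=0$ for a class $\alpha\in\br(\sX_{\bar{\eta}})\{\ell\}$; I will show that $\alpha\in\br(\sX_{\bar{\eta}})_{\div}\{\ell\}$. The first task is to verify the hypotheses of Theorem~\ref{thm:3.1}. Since $\sE\to\Spec R$ is a semi-stable model with multiplicative reduction of type $I_{\nu}$ ($\nu=\kappa d$) and $\sC\to\Spec R$ is smooth, the product $\sY=\sE\times_{R}\sC$ is a regular, triple-point free strictly semi-stable $R$-scheme whose special fibre $\sY_{0}=E_{0}\times C_{0}$ is a cycle of $\nu$ copies of $\bbP^{1}\times C_{0}$ glued along fibres $\{\pt\}\times C_{0}\cong C_{0}$, where $C_{0}\coloneqq\sC_{0}$. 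As translation by $\tau\in\sE^{\sm}[d]$ is fixed-point free on all of $\sE$ (it permutes the nodes of $E_{0}$), the diagonal $\bbZ/d$-action is free, so $q_{\tau}\colon\sY\to\sX$ is finite \'etale Galois of degree $d$, invertible in $R$, and $\sX$ is again triple-point free strictly semi-stable.

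The distinguished choice of $\tau$ among the $n_{d}$ points is precisely what makes the triviality hypothesis hold. For such $\tau$ the induced rotation on the components of $E_{0}$ is free with orbits of size $d$, hence $\sY_{0}\to\sX_{0}$ permutes the $\nu$ components in $\kappa=\nu/d$ free orbits. Consequently $\sX_{0}$ is itself a cycle of $\kappa$ copies of $\bbP^{1}\times C_{0}$ (each component maps isomorphically from any of its lifts), and over every component $\sX_{0,i}$ the restriction of $q_{0}$ is a disjoint union of $d$ sections, i.e. a split torsor. In particular $q_{0}$ is trivial over the closed subset $\sX_{0}^{\sing}\cap\sX_{0,i}\subset\sX_{0,i}$, so Theorem~\ref{thm:3.1} is applicable.

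Applying the implication \eqref{1}$\implies$\eqref{2} of Theorem~\ref{thm:3.1}, we conclude that after a finite ramified base change $R\subset\tilde{R}$ followed by a resolution, $\alpha$ lifts to a class $\tilde{\alpha}\in\br(\tilde{\sX})\{\ell\}$. The crux is that $\br(\tilde{\sX})\{\ell\}$ is a \emph{divisible} group. After a base change of degree $m$ the elliptic model acquires multiplicative reduction $I_{m\nu}$, so $\tilde{\sX}$ remains a cycle degeneration of copies of $\bbP^{1}\times C_{0}$ of the shape treated in Lemma~\ref{lem:7.2}. Passing to the henselization (harmless, since $k$ is algebraically closed, so the geometric generic and special fibres are unchanged), Lemma~\ref{lem:7.3} gives $\br(\tilde{\sX})\{\ell\}\cong\br(\tilde{\sX}_{0})\{\ell\}$, while Lemma~\ref{lem:7.2} together with $\br(C_{0})=0$ (Tsen) yields $\br(\tilde{\sX}_{0})\cong\Pic(C_{0})$. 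Hence $\br(\tilde{\sX})\{\ell\}\cong\Pic(C_{0})\{\ell\}=\Pic^{0}(C_{0})\{\ell\}\cong(\bbQ_{\ell}/\bbZ_{\ell})^{2g}$ is divisible, where $g$ is the genus of $C_{0}$.

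Finally, the restriction map $\br(\tilde{\sX})\{\ell\}\to\br(\tilde{\sX}_{\bar{\eta}})\{\ell\}=\br(\sX_{\bar{\eta}})\{\ell\}$ is a homomorphism, so the image of the divisible group $\br(\tilde{\sX})\{\ell\}$ is a divisible subgroup and therefore contained in the maximal divisible subgroup $\br(\sX_{\bar{\eta}})_{\div}\{\ell\}$. Since $\alpha$ is the image of $\tilde{\alpha}$, we obtain $\alpha\in\br(\sX_{\bar{\eta}})_{\div}\{\ell\}$, which is exactly the contrapositive of the claim. I expect the main obstacle to be the geometric bookkeeping of the second and third paragraphs: identifying the quotient $\sX_{0}$ as a cycle of $\bbP^{1}\times C_{0}$'s with $q_{0}$ split over each component (which simultaneously verifies the hypothesis of Theorem~\ref{thm:3.1} and, through Lemmas~\ref{lem:7.2}--\ref{lem:7.3}, controls $\br(\sX)$), and checking that this cycle structure persists through the base change and resolution forced by Theorem~\ref{thm:3.1}, so that $\br(\tilde{\sX})\{\ell\}$ indeed stays divisible.
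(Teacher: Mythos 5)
Your proposal is correct and follows essentially the same route as the paper: feed $q_\tau$ into Theorem~\ref{thm:3.1}, compute $\br(\sX)\{\ell\}\cong\Pic(C_0)\{\ell\}\cong(\bbQ_\ell/\bbZ_\ell)^{2g}$ via Lemmas~\ref{lem:7.2} and~\ref{lem:7.3}, and conclude from divisibility that nothing outside $\br(\sX_{\bar\eta})_{\div}\{\ell\}$ can lift. Your write-up is in fact somewhat more careful than the paper's, which leaves implicit both the verification of the triviality hypothesis over $\sX_0^{\sing}\cap\sX_{0,i}$ and the persistence of the cycle structure (hence of the divisibility of $\br(\tilde\sX)\{\ell\}$) under the base change and resolution that Theorem~\ref{thm:3.1} forces.
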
\begin{proof} We may assume that $\alpha\in\br(\sX_{\bar{\eta}})\{\ell\}\setminus\br(\sX_{\bar{\eta}})_{\div}\{\ell\},$ for some prime factor $\ell$ of $d$. A sufficient condition for the pull-back $q^{*}_{\bar{\eta}}\alpha$ to be non-zero is to show that up to every finite base change of discrete valuation rings followed by a resolution of the total space (see \cite{Har}), $\alpha$ cannot be lifted to a class in $\br(\sX)\{\ell\}$ (see Theorem \ref{thm:3.1}).\par As the Brauer group of a curve over an algebraically closed field is zero (see \cite[Theorem 5.6.1]{cts}), Lemma \ref{lem:7.2} implies $\br(\sX_{0})\{\ell\}\cong\Pic(C_{0})\{\ell\}$. By passing to the henselization of the base, Lemma \ref{lem:7.3} yields $\br(\sX)\{\ell\}\cong\Pic(C_{0})\{\ell\}$. The group $\Pic(C_{0})\{\ell\}\cong(\bbQ_{\ell}/\bbZ_{\ell})^{2g}$ is divisible, whereas $\br(\sX_{\bar{\eta}})\{\ell\}/\br(\sX_{\bar{\eta}})_{\div}\{\ell\}\cong H^{3}(\sX_{\bar{\eta}},\bbZ_{\ell}(1))_{\tor}$ is finite (see \cite[Proposition 5.2.9]{cts}). These observations certainly imply that $\br(\sX)\{\ell\}\to\br(\sX_{\bar{\eta}})\{\ell\}/\br(\sX_{\bar{\eta}})_{\div}\{\ell\}$ is always the zero map, as claimed.\end{proof} Finally, with the help of the above preparation, we are able to prove Theorem \ref{thm:7.1}.\begin{proof}[Proof of Theorem \ref{thm:7.1}] Let $C$ be a smooth complex projective curve and let $\psi\in\Aut(C)$ be an automorphism of order $d$, with $\Fix(\psi)\neq\emptyset$. We claim that Proposition \ref{prp:7.4} yields an example of an elliptic curve $E$ over $\bbC$ with $\Hom(E,\Jac(C))=\{0\}$ and a point $0\neq\tau\in E[d]$, for which the conclusion of Theorem \ref{thm:7.1} holds true.\par To see this, we may choose a countable algebraically closed field $k\subset\bbC$, such that $C=C_{k}\times_{k}\bbC$. Let $\sE$ be a cycle degeneration over the local ring of a smooth pointed $k$-curve $(B,0)$ with special fibre of type $I_{\kappa d}$, $\kappa\geq 2$. We may assume that \eqref{7.2} holds. Set $\sC\coloneqq C_{k}\times_{k}\sO_{B,0}$ and consider the quotient map $q_{\tau}\colon\sY\to\sX\coloneqq(\sC\times_{\sO_{B,0}}\sE)/(\bbZ/d)$ (from \eqref{7.6}) for some point $0\neq\tau\in\sE^{\sm}[d]$ that restricts via the isomorphism $E^{\sm}_{0}[\nu]\cong \mu_{\nu}\times\bbZ/\nu$ to a tuple of the form $(\zeta^{\kappa},i\kappa)$, where the integer $1\leq i\leq \nu\coloneqq\kappa d$ is prime to $d$ and $\zeta$ is any $d$-th root of unity.\par Proposition \ref{prp:7.4} implies that $q^{*}_{\bar{\eta}}\alpha\neq0$ for all $\alpha\in\br(\sX_{\bar{\eta}})\setminus\br(\sX_{\bar{\eta}})_{\div}$. We pick an embedding $k(B)\subset\bbC$ that respects the given one $k\subset\bbC$ and perform the base change. This yields an example $q_{\tau}\colon E\times C\to(E\times C)/(\bbZ/d)$ defined over $\bbC$ that satisfies the conclusion of Theorem \ref{thm:7.1}. Note that $\Hom(E,\Jac(C))=\{0\}$, because $\sE_{\eta}$ has multiplicative reduction at $0\in B$, whereas $\Jac(C_{\eta})$ has abelian reduction everywhere in $B$.\par Next, we proceed as in the proof of Theorem \ref{thm:1.1}. We consider the Legendre family of elliptic curves $p\colon\sF\subset\bbP_{\bbC}^{2}\times U\to U$, $U\coloneqq\bbA^{1}_{\bbC}-\{0,1\}$, whose fibres $\sF_{\lambda}$ are defined by the affine equation \[y^{2}=x(x-1)(x-\lambda)\] and recall that every elliptic curve is isomorphic to some fibre of this family. We regard $\sF\to U$ as an abelian scheme, with the identity section given by the point $(0:1:0)$.\par Pick $\lambda_{0}\in U$, such that $E=\sF_{\lambda_{0}}$. Up to some finite base change, we may assume that $\tau\in E[d]$ lifts to a section of the family $\sF\to U$. We let the group $\bbZ/d$ act on $\sF$ via the translation $\tau$ and on $C$ via the automorphism $\psi$ and consider the quotient map \[q_{\sF}\colon \sF\times C\to(\sF\times C)/(\bbZ/d).\]For $\lambda\in U$, we set $q_{\lambda}\coloneqq q_{\sF_{\lambda}}$, $Y_{\lambda}\coloneqq \sF_{\lambda}\times C$ and $X_{\lambda}\coloneqq (\sF_{\lambda}\times C)/(\bbZ/d)$.\par Choose $\lambda_{1}\in U$, such that $\Hom(\sF_{\lambda_{1}},\Jac(C))=\{0\}$. Then the maps $sp_{\bar{\eta},\lambda_{i}}\colon\br(Y_{\bar{\eta}})\to\br(Y_{\lambda_{i}})$ are isomorphisms for $i=0,1$ (see Lemma \ref{lem:4.4}). By the smooth proper base change theorem, the quotient of the Brauer group of a smooth projective variety over an algebraically closed field of characteristic $0$ by its maximal divisible subgroup is invariant in smooth proper families (see \cite[Proposition 5.2.9]{cts}). Thus, the surjectivity of $sp_{\bar{\eta},\lambda_{i}}\colon\br(X_{\bar{\eta}})\onto\br(X_{\lambda_{i}})$ yields an isomorphism \[\br(X_{\bar{\eta}})/\br(X_{\bar{\eta}})_{\div}\cong\br(X_{\lambda_{i}})/\br(X_{\lambda_{i}})_{\div}.\]Since specialization is compatible with pullbacks (see Lemma \ref{lem:4.4}) the claim follows by comparing the three pull-backs $q^{*}_{\bar{\eta}}$, $q^{*}_{\lambda_{0}}$ and $q^{*}_{\lambda_{1}}$ via the specialization maps. The proof of Theorem \ref{thm:7.1} is complete.\end{proof}\begin{remark}\label{rem:5.9} Assume that the pair $(C,\psi)$ satisfies $C/\psi\cong\bbP^{1}$. Then $\br((E\times C)/(\bbZ/d))_{\div}=0$ and so Theorem \ref{thm:7.1} says that the pull-back map $q_{\tau}^{*}\colon\br((E\times C)/(\bbZ/d))\to\br(E\times C)$ is injective for some $0\neq\tau\in E[d]$, if $\Hom(E,\Jac(C))=\{0\}$.\par The conclusion of Theorem \ref{thm:7.1} is known for Bi-elliptic surfaces and their canonical covers, without any restriction on the choice of the torsion point $\tau\in E[d]$ (see \cite[Theorem B]{bie}).\end{remark}\begin{example} It is readily checked that the datum $(C,\psi)$ can be replaced by any pair $(V,\psi)$, where $V$ is a smooth complex projective variety and $\psi$ is an automorphism of $V$ of order $d$ satisfying the following two properties:\begin{enumerate}
\item\label{1'} For all prime factors $\ell$ of $d,$ the group $\NS(V)\otimes\bbQ_{\ell}/\bbZ_{\ell}$ is generated by prime divisors that are invariant under $\psi$.
\item\label{2'} $\bigoplus_{\ell|d}\Pic(V)\{\ell\}=\bigoplus_{\ell|d}\Pic^{0}(V)\{\ell\}$.
\end{enumerate} Item \eqref{1'} is needed to ensure $\bigoplus_{\ell|d}\br(\sX)\{\ell\}\cong\bigoplus_{\ell|d}\br(\sX_{0})\{\ell\}$, where $\sX=(\sE\times V)/(\bbZ/d)$ (cf. Proof of Lemma \ref{lem:7.3}). In the proof of Proposition \ref{prp:7.4} item \eqref{2'} was the key to show that classes $\alpha\in\br(\sX_{\bar{\eta}})\setminus\br(\sX_{\bar{\eta}})_{\div}$ do not lift to the total space $\sX$. However, the group $\br(\sX_{0})_{\tor}$ may not be divisible (see \eqref{7.3}) and so the same argument might not work here. We prove Proposition \ref{prp:7.4} for the quotient map $q_{\tau}\colon\sY\coloneqq \sE\times V\to\sX\coloneqq(\sE\times V)/(\bbZ/d)$, where $0\neq \tau\in\sE[d]$ is chosen so that $E_{0}\to E_{0}/\tau$ is generically trivial: For a contradiction, assume that there exists a non-zero $d$-torsion class $\alpha\in\br(\sX_{\bar{\eta}})\setminus\br(\sX_{\bar{\eta}})_{\div}$, such that $q^{*}_{\bar{\eta}}(\alpha)=0\in \br(\sY_{\bar{\eta}})$. Then after a finite base change of discrete valuation rings folllowed by a resolution of the total space, we may assume that $\alpha$ lifts to a class $\tilde{\alpha}\in\br(\sX)[d]$ and $q^{*}\tilde{\alpha}=0\in\br(\sY)[d]$ (see Theorem \ref{thm:3.1}). But by item \eqref{2'} via the last homomorphism in \eqref{7.3} the restriction $0\neq\tilde{\alpha}|_{\sX_{0}}$ must map to a non-zero element in $\br(V)$. Finally, the commutativity of the following diagram\[\begin{tikzcd}
\br(\sX) \arrow[r,] \arrow[d, "q^{*}"] & \br(\sX_{0}) \arrow[r,] \arrow[d, "q_{0}^{*}"] & \br(V) \arrow[d, "\{(\psi^{i})^{*}\}^{d}_{i=1}"] \\
\br(\sY) \arrow[r]                            & \br(\sY_{0}) \arrow[r]                                & \bigoplus^{d}_{i=1}\br(V),                  
\end{tikzcd}\]yields $q^{*}\tilde{\alpha}\neq 0$, which contradicts $q^{*}\tilde{\alpha}=0$.\end{example}  
\section*{Acknowledgements} I am grateful to my supervisor Stefan Schreieder for many helpful comments and discussions concerning this work. This project has received funding from the European Research Council (ERC) under the European Union's Horizon 2020 research and innovation programme under grant agreement No 948066.
\printbibliography
\address{INSTITUTE OF ALGEBRAIC GEOMETRY, LEIBNIZ UNIVERSITY HANNOVER, WELFENGARTEN 1, 30167 HANNOVER, GERMANY.}\\
\textit{Email address}:\ \email{alexandrou@math.uni-hannover.de}
\end{document}